\newtheorem{Theorem}{Theorem}[section]
\newtheorem*{Theorem*}{Theorem}
\newtheorem{Lemma}[Theorem]{Lemma}
\newtheorem{Cor}[Theorem]{Corollary}
\newtheorem{Prop}[Theorem]{Proposition}
\theoremstyle{definition}
\newtheorem{Def}[Theorem]{Definition}
\newtheorem*{Def*}{Definition}
\newtheorem{Rem}[Theorem]{Remark}
\newtheorem{Ex}[Theorem]{Example}
\newtheorem{Que}[Theorem]{Question}
\newtheorem*{Que*}{Question}
\newcommand{\kk}{k}            
\newcommand{\mmod}[1]{\mathrm{mod}(#1)}   
\newcommand{\mmodgr}[1]{\mathrm{mod}_{\mathbb{Z}}(#1)}   
\newcommand{\proj}[1]{\mathrm{Proj}(#1)}   
\newcommand{\cmod}[1]{\mathrm{MCM}(#1)}   
\newcommand{\cmodgr}[1]{\mathrm{MCM}_{\mathbb{Z}}(#1)}   
\newcommand{\Ref}[1]{\mathrm{Ref}(#1)}   
\newcommand{\Gl}[1]{\mathrm{GL}(#1)}   
\newcommand{\Sl}[1]{\mathrm{SL}(#1)}   
\DeclareMathOperator{\Syz}{Syz}
\DeclareMathOperator{\Sym}{Sym}
\DeclareMathOperator{\rank}{rank}
\DeclareMathOperator{\Hom}{Hom}
\DeclareMathOperator{\freerank}{frk}
\DeclareMathOperator{\chara}{char}
\newcommand\reallywidetilde[1]{\ThisStyle{%
  \setbox0=\hbox{$\SavedStyle#1$}%
  \stackengine{-.1\LMpt}{$\SavedStyle#1$}{%
    \stretchto{\scaleto{\SavedStyle\mkern.2mu\sim}{.5467\wd0}}{.7\ht0}%
  }{O}{c}{F}{T}{S}%
}}
\title{The symmetric signature}
\author{Holger Brenner and Alessio Caminata}
\address{{\small Holger Brenner, Institut f\"ur Mathematik, Universit\"at Osnabr\"uck, Albrechtstrasse 28a, 49076 Osnabr\"uck, Germany}}
\email{{\small holger.brenner@uni-osnabrueck.de}}
\address{{\small Alessio Caminata, Institut f\"ur Mathematik, Universit\"at Osnabr\"uck, Albrechtstrasse 28a, 49076 Osnabr\"uck, Germany}}
\curraddr{ Institut de Math\'{e}matiques, Universit\'{e} de Neuch\^{a}tel\\Rue Emile-Argand 11, CH-2000 Neuch\^{a}tel, Switzerland}
\email{{\small alessio.caminata@unine.ch}}
\begin{document}

\thanks{\textit{Mathematics Subject Classification (2010)}: 13A35, 13A50.
\\ \indent  \textit{Keywords}: F-signature, ADE singularities, free resolutions, K\"ahler differentials}

\begin{abstract}
We define two related invariants for a $d$-dimensional local ring $(R,\mathfrak{m},\kk)$ called syzygy and differential symmetric signature by looking at the maximal free splitting of reflexive symmetric powers of two modules: the top dimensional syzygy module $\Syz_R^{d}(\kk)$ of the residue field and the module of K\"ahler differentials $\Omega_{R/\kk}$ of $R$ over $\kk$.
We compute these invariants for two-dimensional ADE singularities obtaining $1/|G|$, where $|G|$ is the order of the acting group, and for cones over elliptic curves obtaining $0$ for the differential symmetric signature.
These values coincide with the F-signature of such rings in positive characteristic.
\end{abstract}

\maketitle

\section*{Introduction}
\par Let $(R,\mathfrak{m},\kk)$ be a Noetherian local ring of prime characteristic $p$ and dimension $d$ which is F-finite, and with algebraically closed residue field $\kk$.
For every $e\in\mathbb{N}$, let $q=p^e$, and let $^e\!R$ be the $R$-module which is equal to $R$ as abelian group and has scalar multiplication twisted via the $e$-th Frobenius homomorphism, that is $r\circ s:=r^{q}s$ for $r\in R$ and $s\in\ ^e\!R$.
We decompose it as  
\begin{equation*}
^e\!R=R^{a_q}\oplus M_q,
\end{equation*}
where the module $M_q$ has no free direct summands.
The number $a_q$ is also called the \emph{free rank} of $^e\!R$ and denoted by $\freerank_R(^e\!R)$, while if $R$ is a domain, one has that $q^{d}=\rank_R(^e\!R)$, the usual rank of an $R$-module.
Huneke and Leuschke \cite{HL02} defined the \emph{F-signature} of $R$ as the limit 
\begin{equation*}
 s(R):=\lim_{e\rightarrow+\infty}\frac{a_q}{q^{d}}.
\end{equation*}

 \par They proved that the limit defining the F-signature exists assuming that $R$ is Gorenstein and in other cases.
 Watanabe and Yoshida \cite{WY04} introduced the \emph{minimal relative Hilbert-Kunz multiplicity}. 
 Then, Yao \cite{Yao06} proved it actually coincides with the F-signature.
Finally,  Tucker \cite{Tuc12} proved that the F-signature exists for every reduced F-finite Noetherian local ring.

\par The F-signature is a real number between $0$ and $1$ and provides delicate information about the singularities of $R$. 
Two principal results in this direction are the fact that $s(R)=1$ if and only if the ring is regular \cite{WY00}, and that $s(R) > 0$ if and only if $R$ is strongly F-regular \cite{AL03}.

\par The main motivation for this paper is given by the following question. 

\begin{Que*}
Does there exist an invariant analogous to the F-signature which is independent of the characteristic and can be used to study singularities also in characteristic zero?
\end{Que*}

\par A possible natural attempt to define a characteristic zero version of the F-signature is the following.
If $R$ is a reduced $\mathbb{Z}$-algebra such that $\mathrm{Spec} \,R\rightarrow\mathrm{Spec} \,\mathbb{Z}$ is dominant, then for every prime number $p$ we consider its reduction mod $p$, $R_p := R \otimes_{\mathbb{Z}} (\mathbb{Z}/p\mathbb{Z})$, and compute the F-signature $s(R_p)$. 
One may ask whether the limit 
\begin{equation*}
\lim_{p\rightarrow+\infty} s(R_p)
\end{equation*}
exists, and use this limit to define a characteristic $0$ version of the F-signature.

\par Unfortunately, it is difficult to find an appropriate meaning and compute the previous limit, or even determine whether it exists at all. 
If $p_1$ and $p_2$ are distinct prime numbers, the two Frobenius homomorphisms are difficult to compare.
If $R$ is a domain then the rank of $^e\!R_{p_1}$ and the rank of $^e\!R_{p_2}$ are different, the first one being $p_1^e$.

\par In this paper we define a new invariant called \emph{symmetric signature} which works in any characteristic.
As inspiration for our construction, we considered  two important properties of the modules $^e\!R$ which we would like to keep.
The first fact is a  result of Kunz \cite{Kun69} which states that $^e\!R$ is a free module (for all or for some $e\in\mathbb{N}$) if and only if $R$ is a regular ring.
Next, if the ring $R$ is Cohen-Macaulay then $^e\!R$ is a maximal Cohen-Macaulay (MCM for short) module.

\par Let $(R,\mathfrak{m},\kk)$ be a Noetherian local $\kk$-algebra of dimension $d$.
We look for an $R$-module with the property that it is free if and only if $R$ is regular.
We have at least two modules with this property: the \emph{top-dimensional syzygy module of the residue field} $\Syz^{d}_R(\kk)$, and the  \emph{module of (K\"ahler) differentials}  $\Omega_{R/\kk}$ of $R$ over $\kk$ or its dual.

\par If $R$ is Cohen-Macaulay, then the module $\Syz^{d}_R(\kk)$ is MCM by the depth lemma, on the other hand $\Omega_{R/\kk}$ is not even reflexive in general.
For this reason we prefer to work with its reflexive hull, $\Omega_{R/\kk}^{**}$, which is called \emph{module of Zariski differentials of $R$ over $\kk$}.
The sign $(-)^*$ stands for the functor $\Hom_R(-,R)$.

\par We want to study an asymptotic behaviour, so we construct two classes of modules (one for $\Syz^{d}_R(\kk)$, and one for $\Omega_{R/\kk}^{**}$) indexed by a natural number $q$.
To do this we introduce the functor of $q$-th reflexive symmetric powers $\left(\Sym^q_R(-)\right)^{**}$, that is ordinary $q$-th symmetric powers of $R$-modules composed with the reflexive hull.

\par The choice of studying symmetric powers is motivated by an unpublished work of Brenner and Fischbacher-Weitz \cite{BF09}.
Symmetric powers of cotangent bundles on projective varieties have been studied also by other authors such as Bogomolov and De Oliveira\cite{BDO08}, and Sakai \cite{Sak78}.
Instead, the reflexive hull is motivated by the fact that we want to stay inside the category of reflexive modules, which coincides with the category of MCM $R$-modules if $R$ is a normal two-dimensional domain.

\par More precisely, we procede as follows.
 We consider $\Syz^{d}_R(\kk)$ (for $\Omega_{R/\kk}^{**}$ the situation is analogous) and we apply to it the $q$-th reflexive symmetric powers functor for a natural number $q$.
We obtain a reflexive $R$-module 
\begin{equation*}
\mathcal{S}^q:=\left(\Sym_R^q\big(\Syz^{d}_R(\kk)\big)\right)^{**} 
\end{equation*}
\par We decompose the module $\mathcal{S}^q$ as
\begin{equation*}
 \mathcal{S}^q= R^{a_q} \oplus M_q,
\end{equation*}
with the module $M_q$ containing no free direct summands, so that $a_q=\freerank_R\mathcal{S}^q$ the free rank of $\mathcal{S}^q$. 
For ease of notation we fix also $b_q:=\rank_R\mathcal{S}^q$, and we introduce the following (Definition \ref{defsymmetricsignature}).
\begin{Def*} The real number
\begin{equation*}
s_{\sigma}(R) := \lim_{N\rightarrow+\infty} \frac{\sum_{q=0}^Na_q}{\sum_{q=0}^Nb_q}
\end{equation*}
is called  \emph{(syzygy) symmetric signature} of $R$, provided the limit exists.
\end{Def*}
\par Replacing the module $\Syz^{d}_R(\kk)$ with $\Omega_{R/\kk}^{**}$ in the previous construction we define also the \emph{differential symmetric signature} of $R$, which we denote by $s_{d\sigma}(R)$.

\par In order to understand the behaviour of these new invariants in comparison with the F-signature we concentrate on two important classes of examples: two-dimensional Kleinian singularities, and  coordinate rings of plane elliptic curves.

\par Two-dimensional Kleinian or ADE singularities are the rings of invariants $R=S^G$ of a power series ring in two variables $S=\kk\llbracket u,v\rrbracket$ under the linear action of certain finite subgroups $G$ of $\Sl{2,\kk}$, called Klein groups.
The field $\kk$ is algebraically closed and such that its characteristic does not divide the order of the acting group $G$.
Watanabe and Yoshida \cite{WY04} proved that if the characteristic of $\kk$ is a prime number then the F-signature of $R$ is $s(R)=\frac{1}{|G|}$.
We prove that the same holds for the symmetric and the differential symmetric signature in any characteristic (Corollary \ref{corollaryKleinian}), that is
\begin{equation*}
 s_{\sigma}(R)=s_{d\sigma}(R)=\frac{1}{|G|}.
\end{equation*}

\par Our main tool to prove this result is the so-called \emph{Auslander correspondence} (Theorem \ref{Auslandertheorem}).
This states that for a Kleinian singularitiy $R=S^G$ there is a functorial one to one correspondence between irreducible $\kk$-representations of $G$ and indecomposable maximal Cohen-Macaulay $R$-modules.
The module corresponding to a $\kk$-representation $V$ is given by $\mathcal{A}(V):=(S\otimes_{\kk}V)^G$.

\par We prove  that this construction commutes with $q$-th reflexive symmetric powers (Theorem \ref{symcommuteswithauslandertheorem}), that is  we have
 \begin{equation*}
  \mathcal{A}\left(\Sym_{\kk}^q(V)\right)\cong\left(\Sym_R^q(\mathcal{A}(V))\right)^{**}.
 \end{equation*}
Here, $\Sym_R^q(-)$ denotes the $q$-th symmetric power of an $R$-module, and $\Sym_{\kk}^q(V)$ is the $q$-th symmetric power of the representation $V$.

\par This result is the key to translate the problem of computing the symmetric signature into a problem in representation theory of finite groups.
In fact, it turns out that the free rank of some MCM $R$-module $M=\mathcal{A}(V)$ is equal to the multiplicity of the trivial representation into $V$, and the latter can be more easily computed using tools from representation theory, such as characters.

\par The symmetric signature of two-dimensional cyclic quotient singularities $\kk\llbracket u,v\rrbracket^G$, with $G$ cyclic, is also $1/|G|$ and is computed by Katth\"an and the second author in the paper \cite{CK16}.

\par Let $R$ be the coordinate ring of a plane elliptic curve over an algebraically closed field $\kk$.
Then $R$ is not strongly F-regular, therefore its F-signature is $s(R)=0$.
We prove that the differential symmetric signature of $R$ is $s_{d\sigma}(R)=0$ (Theorem \ref{Theocotangentelliptic}), provided that $\chara\kk$ is not $2$ or $3$.
For the syzygy symmetric signature we give an upper bound, that is $s_{\sigma}(R)\leq\frac{1}{2}$ (Corollary \ref{corsleq12elliptic}), provided that the limit exists.

\par The methods we use in this situation are geometric. 
We use the correspondence between graded MCM $R$-modules and vector bundles over the smooth projective curve $Y=\mathrm{Proj}\,R$, and we translate the problem of computing $s_{\sigma}(R)$ and $s_{d\sigma}(R)$  into an analogous problem in the category $\mathrm{VB}(Y)$ of vector bundles over $Y$.
The main advantage of this approach is that over an elliptic curve $Y$ the indecomposable vector bundles have been classified and described by Atiyah \cite{Ati57}.

\par The structure of this paper is the following.
\par In Section \ref{sectionFandsymmetric} we give a short review of the F-signature and its properties.
Then, we define the symmetric and differential symmetric signatures, together with some other variants: a generalized version for modules (Definition \ref{defgeneralizedsymmetric}), and one for graded rings (Definition \ref{defgradedsymmetric}).
Some easy properties and consequences of the symmetric signature are also stated in this section.

\par In Section \ref{sectionAuslandersymmetric} we review the theory of the Auslander correspondence and we prove that the Auslander functor commutes with reflexive symmetric powers (Theorem \ref{symcommuteswithauslandertheorem}).

\par Section \ref{sectionKleinian} is dedicated to the computation of the symmetric signatures of two-dimensional Kleinian singularities, and Section \ref{sectionelliptic} to the computation of the symmetric signature for coordinate rings of plane elliptic curves.

\section{F-signature and Symmetric signature}\label{sectionFandsymmetric}
\subsection{F-signature}\label{subsectionFsignature}
\par We recall some general facts about rings of prime characteristic, and the definition and basic properties of the F-signature. 
Let $R$ be a commutative ring containing a field of prime characteristic $p$. 
With the letter $e$ we always denote a natural number, and with $q=p^e$ a power of the characteristic. 
The \emph{Frobenius homomorphism} is the ring homomorphism $F:R\rightarrow R$, $F(r)=r^p$, we often consider also its iterates $F^e:R\rightarrow R$, $F^e(r)=r^{q}$. 
For any finitely generated $R$-module $M$, we denote by $^e\! M$ the $R$-module $M$, whose multiplicative structure is pulled back via $F^e$. 
The scalar multiplication on $^e\!M$ is given by $r\cdot m:=r^qm$, for any $r\in R$, $m\in \ \! ^e\!M$. 

\par We say that $R$ is \emph{F-finite} if $^1\!R$ is a finitely generated $R$-module, and we say that $R$ is \emph{F-split} if the Frobenius map $F:R\rightarrow\ ^1\!R$ splits. 
If $(R,\mathfrak{m},\kk)$ is a complete Noetherian local ring, then $R$ is F-finite if and only if $[\kk^{1/p}:\kk]$ is finite. 
We will always assume that our rings are F-finite, and that $\kk$ is perfect, that is $[\kk^{1/p}:\kk]=1$. 
The latter is not a strong restriction, but it is done simply to avoid that numbers like $[\kk^{1/p}:\kk]$ appear in the definitions and in the results. 

\begin{Rem}\label{eRisMCM}
 If $R$ is F-finite and Cohen-Macaulay, then $ ^e\!R$ is a maximal Cohen-Macaulay module for every $e\in\mathbb{N}$. 
 Actually, if $x_1,\dots,x_n$ is a maximal $R$-regular sequence, then it is also an $ ^e\!R$-regular sequence for every $e\in\mathbb{N}$.
\end{Rem}

\par Let $(R,\mathfrak{m},\kk)$ be a $d$-dimensional Noetherian reduced local ring of prime characteristic $p$, which is F-finite and such that $\kk$ is perfect. 
We recall the definition of \emph{free rank} of an $R$-module $M$
\begin{equation*}
\freerank_R(M):=\max\{n: \ \exists \text{ a split surjection } \varphi:M\twoheadrightarrow R^n\}.
\end{equation*}
Since the cancellation property holds for finitely generated modules over a local ring (cf. \cite[Corollary 1.16]{LW12}), $\freerank_R(M)$ is the unique integer such that we have a decomposition
\begin{equation*}
M\cong R^{\freerank_R(M)}\oplus N,
\end{equation*}
and the module $N$ contains no free $R$-direct summands.

\begin{Def}[Huneke-Leuschke, \cite{HL02}]
 The \emph{F-signature} of $R$, denoted by $s(R)$ is the limit
 \begin{equation*}
  s(R):=\lim_{e\rightarrow+\infty}\frac{\freerank_R(^e\!R)}{q^d}.
 \end{equation*}
 \end{Def}
 Tucker  \cite{Tuc12} proved  that the F-signature exists for every reduced F-finite Noetherian local ring.
\begin{Rem}
 If $R$ is a domain of dimension $d$, then $\rank_R(^e\!R)=q^d$. So we can write the limit defining the F-signature as
 \begin{equation*}
  s(R)=\lim_{e\rightarrow+\infty}\frac{\freerank_R (^e\!R)}{\rank_R(^e\!R)}.
 \end{equation*}
\end{Rem}

\par The F-signature is always a real number in the interval $[0,1]$. 
Using a result of Watanabe and Yoshida \cite{WY00}, Huneke and Leuschke \cite{HL02} proved that if $R$ is Cohen-Macaulay then the extreme value $1$ is obtained if and only if the ring is regular. Then, Yao \cite{Yao06} removed the Cohen-Macaulay assumption. 

\begin{Theorem}\label{WatYoshTheorem1}
 Let $(R,\mathfrak{m},\kk)$ be a reduced F-finite local ring of prime characteristic such that $\kk$ is infinite and perfect. 
 Then $s(R)=1$ if and only if $R$ is regular.
\end{Theorem}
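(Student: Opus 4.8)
The plan is to prove the two implications separately, the forward one being elementary and the converse resting on the behaviour of the Hilbert--Kunz multiplicity. For the direction \emph{$R$ regular $\Rightarrow s(R)=1$} I would argue directly from Kunz's theorem \cite{Kun69}: a regular local ring is a domain of dimension $d$, so by the Remark above $\rank_R(^e\!R)=q^d$, while Kunz gives that $^e\!R$ is free. Hence $\freerank_R(^e\!R)=\rank_R(^e\!R)=q^d$, so $a_q/q^d=1$ for every $e$ and $s(R)=1$.

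For the converse \emph{$s(R)=1\Rightarrow R$ regular}, the first step is to upgrade the hypothesis. Since $s(R)=1>0$, the criterion of Aberbach and Leuschke \cite{AL03} shows that $R$ is strongly F-regular, and in particular a normal Cohen--Macaulay domain. This is convenient because it places us inside the Cohen--Macaulay world without invoking Yao's general argument: by Remark \ref{eRisMCM} each $^e\!R$ is now maximal Cohen--Macaulay, so in the decomposition $^e\!R\cong R^{a_q}\oplus M_q$ the complement $M_q$ is again maximal Cohen--Macaulay and has no free direct summand.

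The second step is to compare $s(R)$ with the Hilbert--Kunz multiplicity $e(\mathfrak m,R)_{\mathrm{HK}}:=\lim_{e}\ell_R(R/\mathfrak m^{[q]})/q^d$, where $\mathfrak m^{[q]}=(x^q:x\in\mathfrak m)$. Since $\mathfrak m\cdot{}^e\!R=\mathfrak m^{[q]}$ inside $R={}^e\!R$, the minimal number of generators of $^e\!R$ equals $\ell_R(R/\mathfrak m^{[q]})$, whence
\begin{equation*}
e(\mathfrak m,R)_{\mathrm{HK}}=\lim_{e\to+\infty}\frac{\mu_R(^e\!R)}{q^d}=\lim_{e\to+\infty}\frac{a_q+\mu_R(M_q)}{q^d}=s(R)+\lim_{e\to+\infty}\frac{\mu_R(M_q)}{q^d}.
\end{equation*}
To control the last limit I would bound $\mu_R(M_q)$ by the multiplicity. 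Using that $\kk$ is infinite I choose a minimal reduction $J$ of $\mathfrak m$ generated by a system of parameters; since $M_q$ is maximal Cohen--Macaulay this gives $\mu_R(M_q)\le\ell_R(M_q/JM_q)=e(\mathfrak m,M_q)=e(\mathfrak m,R)\cdot\rank_R(M_q)$. As $\rank_R(M_q)=q^d-a_q$ and $a_q/q^d\to s(R)=1$, the ratio $\rank_R(M_q)/q^d$ tends to $0$, so $\mu_R(M_q)/q^d\to 0$ and therefore $e(\mathfrak m,R)_{\mathrm{HK}}=1$. The inequality of Watanabe and Yoshida \cite{WY00}, which states that an unmixed local ring with Hilbert--Kunz multiplicity $1$ is regular, then finishes the argument.

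The main obstacle lies entirely in the converse direction, and within it the decisive external input is the characterization $e(\mathfrak m,R)_{\mathrm{HK}}=1\iff R\text{ regular}$ of \cite{WY00}; the reduction to the Cohen--Macaulay normal domain case via \cite{AL03} and the estimate $\mu_R(M_q)\le e(\mathfrak m,R)\cdot\rank_R(M_q)$ are the technical steps that make the passage from $s(R)=1$ to $e(\mathfrak m,R)_{\mathrm{HK}}=1$ work, the latter being precisely where the assumption that $\kk$ is infinite is needed.
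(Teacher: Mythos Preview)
The paper does not give its own proof of this theorem; it is quoted as background, with the sentence preceding it attributing the Cohen--Macaulay case to Huneke--Leuschke \cite{HL02} (via \cite{WY00}) and the removal of the Cohen--Macaulay hypothesis to Yao \cite{Yao06}. There is therefore no in-paper argument to compare against.

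Your proposal is correct and is essentially the Huneke--Leuschke argument: the forward implication via Kunz, and for the converse the identity $\mu_R({}^e\!R)=\ell_R(R/\mathfrak m^{[q]})$, the bound $\mu_R(M_q)\le e(\mathfrak m,R)\cdot\rank_R(M_q)$ through a minimal reduction (which is where the infinite residue field enters), and the final appeal to the Watanabe--Yoshida criterion $e_{\mathrm{HK}}(\mathfrak m,R)=1\Rightarrow R$ regular. The one deviation from \cite{HL02} is that instead of assuming Cohen--Macaulayness you deduce it from $s(R)>0$ via Aberbach--Leuschke \cite{AL03}; this is a legitimate shortcut, though you should be aware that in the paper's presentation Theorem~\ref{AbeLeusTheorem} is listed \emph{after} Theorem~\ref{WatYoshTheorem1}, and historically \cite{HL02} precedes \cite{AL03}, so the authors evidently regard the regularity criterion as logically prior. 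If one wanted a proof independent of \cite{AL03}, the route is Yao's argument \cite{Yao06}, which handles the non-Cohen--Macaulay case directly.
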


\par Also the value $s(R)=0$ has a special meaning in terms of the singularities of the ring, it is equivalent to the ring being not strongly F-regular, an important notion in tight closure theory.

\begin{Theorem}[Aberbach-Leuschke, \cite{AL03}]\label{AbeLeusTheorem}
 Let $(R,\mathfrak{m},\kk)$ be a reduced excellent F-finite local ring of prime characteristic such that $\kk$ is perfect. 
 Then $s(R)>0$ if and only if $R$ is strongly F-regular.
\end{Theorem}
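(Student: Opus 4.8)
The plan is to recast the free rank $\freerank_R({}^{e}\!R)$ in terms of the \emph{$e$-th splitting ideals} and to read off both implications from the asymptotics of their colengths. First I would reduce to the case of a complete local ring, since completion preserves reducedness (this is where excellence enters), the F-signature, and strong F-regularity; for the direction that concludes strong F-regularity I may moreover assume $R$ is a normal domain, as strongly F-regular rings are automatically normal domains. For each $e$ set
\[
I_e:=\{x\in R:\ \varphi({}^{e}\!x)\in\mathfrak{m}\ \text{for every }\varphi\in\Hom_R({}^{e}\!R,R)\},
\]
where ${}^{e}\!x$ denotes $x$ regarded as an element of ${}^{e}\!R$. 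A short verification shows that $I_e$ is an ideal with $\mathfrak{m}^{[q]}\subseteq I_e\subseteq\mathfrak{m}$, that $\freerank_R({}^{e}\!R)=\lambda_R(R/I_e)$ (a free summand contributes coordinate maps that do not land in $\mathfrak{m}$, while a summand with no free part has every map to $R$ landing in $\mathfrak{m}$), and that for a nonzerodivisor $c$ one has $c\notin I_e$ exactly when the map $R\to{}^{e}\!R$, $1\mapsto{}^{e}\!c$, splits. Thus $R$ is strongly F-regular precisely when no nonzerodivisor lies in $\bigcap_e I_e$.

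For the implication $s(R)>0\Rightarrow$ strongly F-regular I argue by contraposition. If $R$ is not strongly F-regular, there is a nonzerodivisor $c$ with $c\in I_e$ for all $e$, whence $I_e\supseteq\mathfrak{m}^{[q]}+(c)$ and
\[
\freerank_R({}^{e}\!R)=\lambda_R(R/I_e)\le\lambda_R\big(R/(\mathfrak{m}^{[q]}+cR)\big)=\lambda_R\big((R/cR)/\mathfrak{m}^{[q]}(R/cR)\big).
\]
Since $c$ is a nonzerodivisor, $\dim R/cR=d-1$, so the right-hand side is a Hilbert--Kunz colength in a $(d-1)$-dimensional ring and is of order $O(q^{d-1})$. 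Dividing by $q^d$ and letting $e\to\infty$ forces $s(R)=0$, which contradicts the hypothesis; by Tucker's theorem the limit exists, so no ambiguity arises. This settles one direction cleanly.

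The reverse implication, strongly F-regular $\Rightarrow s(R)>0$, is the delicate part and the main obstacle. Here I would fix a nonzerodivisor test element $c$ and, by strong F-regularity, an $e_0$ together with a splitting $\psi_0\in\Hom_R({}^{e_0}\!R,R)$ satisfying $\psi_0({}^{e_0}\!c)=1$. Composing an arbitrary $\varphi\in\Hom_R({}^{e}\!R,R)$ with the Frobenius pushforward of any $\theta\in\Hom_R({}^{e_0}\!R,R)$ produces elements of $\Hom_R({}^{e+e_0}\!R,R)$, and evaluating these on $I_{e+e_0}$ yields the comparison
\[
\theta\big({}^{e_0}\!I_{e+e_0}\big)\subseteq I_e\qquad\text{for every }\theta\in\Hom_R({}^{e_0}\!R,R).
\]
The hard part will be to upgrade this set-theoretic comparison into a quantitative lower bound for $\lambda_R(R/I_{e+e_0})$ in terms of $\lambda_R(R/I_e)$: the splitting $\psi_0$ of the test element $c$ must be leveraged to show that $I_{e+e_0}$ is strictly smaller than $I_e$ by a factor bounded away from the trivial one, with the error coming from colon operations against $c$ contributing only at the lower order $q^{d-1}$, exactly as in the previous paragraph. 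Iterating such an estimate and invoking the existence of the limit should produce a uniform bound $\freerank_R({}^{e}\!R)\ge\varepsilon q^d$, hence $s(R)\ge\varepsilon>0$. Extracting an honest positive constant $\varepsilon$ from a single splitting, uniformly in $e$, is the crux: it is precisely where the full force of strong F-regularity is required, rather than mere F-purity (which only yields $\freerank_R({}^{e}\!R)\ge 1$).
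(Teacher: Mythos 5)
The paper offers no proof of this statement: it is quoted as Theorem \ref{AbeLeusTheorem} directly from Aberbach--Leuschke \cite{AL03}, so there is no internal argument to measure yours against, only the original reference. Judged on its own terms, the first half of your proposal is sound. The reduction to the complete case, the identity $\freerank_R({}^{e}\!R)=\lambda_R(R/I_e)$ (which does require $\kk$ perfect, as the paper assumes, so that no factor $[\kk^{1/p}:\kk]^e$ intervenes), and the characterization of strong F-regularity via $\bigcap_e I_e$ are all standard and correctly stated. The implication $s(R)>0\Rightarrow R$ strongly F-regular then follows exactly as you say: a nonzerodivisor $c\in\bigcap_e I_e$ gives $I_e\supseteq\mathfrak{m}^{[q]}+(c)$, and $\lambda_R\big(R/(\mathfrak{m}^{[q]}+cR)\big)$ is the Hilbert--Kunz function of the $(d-1)$-dimensional ring $R/cR$, hence $O(q^{d-1})$, forcing $s(R)=0$. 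This direction is complete.

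The converse direction, however, contains a genuine gap which you yourself flag but do not close, and it is precisely the substance of the theorem. From a single splitting $\psi_0({}^{e_0}\!c)=1$ and the containment $\theta\big({}^{e_0}\!I_{e+e_0}\big)\subseteq I_e$ one only gets, without further input, that each $I_e$ is proper, i.e.\ $\freerank_R({}^{e}\!R)\ge 1$ --- which is F-purity, not positivity of the signature, exactly as you note. The missing ingredient is a \emph{uniform} colength estimate of the shape $\lambda_R\big((I:c)/I\big)\le Cq^{d-1}$ valid for \emph{all} ideals $I$ with $\mathfrak{m}^{[q]}\subseteq I$ and all $q$, with $C$ independent of both; only with such a bound can the splitting of $c$ be converted into $\lambda_R(R/I_{e+e_0})\ge p^{e_0 d}\lambda_R(R/I_e)-C'q^{d}$-type recursions that iterate to $\freerank_R({}^{e}\!R)\ge\varepsilon q^d$. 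Establishing that uniform estimate (via discrete valuations or via the existence of test elements on the completion) is the technical heart of \cite{AL03} and reappears in \cite{Tuc12}; as written, your proposal assumes the conclusion of that lemma rather than proving it, so the hard direction remains open in your argument.
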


\begin{Rem}
 If the ring $R$ has dimension $0$ or $1$, then it is normal if and only if it is regular if and only if it is strongly F-regular.
 So by Theorem \ref{WatYoshTheorem1} and Theorem \ref{AbeLeusTheorem} we have only two possibilities for the F-signature of $R$.
 We have $s(R)=1$ if $R$ is regular, and $s(R)=0$ otherwise.
\end{Rem}

Theorem \ref{WatYoshTheorem1} and Theorem \ref{AbeLeusTheorem} justify the statement that \emph{F-signature measures singularities}. 
Roughly speaking, the closer the F-signature to $1$ is, the nicer the singularity.
An important example is the F-signature of quotient singularities computed by Watanabe and Yoshida.

\begin{Theorem}[Watanabe-Yoshida, \cite{WY04}]\label{WatYoshTheorem2}
Let $R=\kk\llbracket x_1,\dots x_n\rrbracket^G$ be a quotient singularity over a field $\kk$ of prime characteristic $p$. 
Assume that the acting group $G\subseteq\Gl{n,\kk}$  is a small finite group such that $(p,|G|)=1$. 
Then the F-signature of $R$ is
\begin{equation*}
 s(R)=\frac{1}{|G|}.
\end{equation*}
\end{Theorem}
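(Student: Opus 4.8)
The plan is to compute $\freerank_R({}^e\!R)$ directly, reduce it to representation theory of $G$ together with a Molien-type estimate, and then isolate the single step where smallness of $G$ is essential.

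First I would record that $\dim R=\dim S=d=n$ and that, since $(p,|G|)=1$, taking $G$-invariants is exact and commutes with the Frobenius twist, so ${}^e\!R\cong({}^e\!S)^G$ as $R$-modules. As $S$ is regular, ${}^e\!S$ is $S$-free (Kunz), and because $G$ is linearly reductive it is moreover free as a $G$-equivariant $S$-module: choosing a $G$-equivariant $\kk$-splitting of ${}^e\!S\twoheadrightarrow U$, where $U:={}^e\!S\otimes_S\kk\cong\kk[y_1,\dots,y_n]/(y_1^q,\dots,y_n^q)$ carries the (Frobenius-twisted) action of $G\subseteq\Gl{n,\kk}$, equivariant Nakayama gives ${}^e\!S\cong S\otimes_\kk U$ $G$-equivariantly. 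Decomposing $U=\bigoplus_i V_i^{\oplus m_i(e)}$ into irreducibles (with $V_0$ trivial) and putting $M_i:=(S\otimes_\kk V_i)^G$ (the modules of covariants), I obtain ${}^e\!R\cong\bigoplus_i M_i^{\oplus m_i(e)}$ with $M_0=S^G=R$ and $m_i(e)=\dim_\kk\Hom_G(V_i,U)$.

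Since $R$ is complete local, Krull--Schmidt holds and free rank is additive on direct sums, so $\freerank_R({}^e\!R)=\sum_i f_i\,m_i(e)$ with $f_i:=\freerank_R(M_i)$ independent of $e$ and $f_0=1$. I would then estimate $m_i(e)$ by Molien's formula $m_i(e)=\tfrac{1}{|G|}\sum_{g\in G}\chi_i(g^{-1})\,\mathrm{tr}(g\mid U)$, valid as $p\nmid|G|$. Diagonalizing each $g$ (its order is prime to $p$, hence $g$ is semisimple) with eigenvalues $\lambda_1,\dots,\lambda_n$ on $\langle y_1,\dots,y_n\rangle$, one has $\mathrm{tr}(g\mid U)=\prod_{j}(1+\lambda_j+\cdots+\lambda_j^{\,q-1})$, which equals $q^n$ for $g=1$ and, for $g\neq 1$, is $O(q^{n-1})$ because each eigenvalue $\lambda_j\neq 1$ contributes a bounded factor $(\lambda_j^{\,q}-1)/(\lambda_j-1)$. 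Thus $m_i(e)=\tfrac{\dim V_i}{|G|}\,q^n+O(q^{n-1})$, and
\[
s(R)=\lim_{e\to\infty}\frac{\freerank_R({}^e\!R)}{q^n}=\frac{1}{|G|}\sum_i f_i\dim V_i=\frac{\freerank_R(S)}{|G|},
\]
the last equality because $S\cong\bigoplus_i M_i^{\oplus\dim V_i}$ as $R$-modules (the isotypic decomposition arising from the regular representation).

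It therefore remains to prove $\freerank_R(S)=1$, equivalently $f_i=0$ for every nontrivial $V_i$, and this is the hard part, where smallness of $G$ enters decisively. The plan is: since $G$ contains no pseudo-reflections, the branch locus of $R\subseteq S$ has codimension at least two, so the extension is étale in codimension one and the trace pairing is unimodular in codimension one; as $M_i$ and $M_{i^*}$ are reflexive, I would upgrade this to an isomorphism $\Hom_R(M_i,R)\cong M_{i^*}$ realized by the trace pairing $M_i\times M_{i^*}\to R$. Finally, for $i\neq 0$ the covariants have no constant term, $(M_i)_0=V_i^G=0$, so the two arguments of the pairing lie in $\mathfrak{m}_S\otimes V_i$ and $\mathfrak{m}_S\otimes V_{i^*}$ respectively, whence every value lands in $\mathfrak{m}_S\cap R=\mathfrak{m}_R$; thus no $R$-linear map $M_i\to R$ is surjective and $f_i=0$. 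I expect the main obstacle to be precisely the codimension-one étaleness and the resulting identification $\Hom_R(M_i,R)\cong M_{i^*}$: without smallness the trace form degenerates along the ramification (already in $\kk\llbracket x\rrbracket^{\mathbb{Z}/m}$, where $\freerank_R(S)=|G|$ and $s(R)=1$), and the value $1/|G|$ fails. Everything else in the argument is formal.
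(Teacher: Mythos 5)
The paper does not actually prove this statement: it is quoted from Watanabe--Yoshida \cite{WY04} (and subsumed by the Hashimoto--Nakajima result recorded as Theorem \ref{YusukeTheorem}), so there is no internal proof to compare against. Your argument is correct and is essentially the standard one behind those references: the identification ${}^e\!R\cong({}^e\!S)^G$, the equivariant-Nakayama decomposition ${}^e\!S\cong S\otimes_\kk U$ giving ${}^e\!R\cong\bigoplus_i M_i^{\oplus m_i(e)}$, the Brauer-character estimate $m_i(e)=\frac{\dim_\kk V_i}{|G|}q^n+O(q^{n-1})$ obtained by isolating $g=e$, and the final reduction to $\freerank_R(S)=1$. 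This also mirrors, step for step, the strategy the paper itself uses for the symmetric signature in Section \ref{sectionKleinian}: Lemma \ref{sumofrootofunitybounded} plays the role of your $O(q^{n-1})$ bound in Theorem \ref{theoremkleinrepresentation}, and smallness enters through the codimension-two fixed locus exactly as in Lemma \ref{M=SxMGlemmadim2}. Your route to $f_i=0$ for $i\neq 0$ via the trace-form isomorphism $\Hom_R(M_i,R)\cong M_{i^*}$ (iso in codimension one by unramifiedness, upgraded by reflexivity) is a pleasant economy: it extracts only the ``no free summands beyond one'' statement without establishing the full Auslander correspondence in dimension $n$, which is what \cite{HN15} do to get the stronger conclusion $s(R,M_t)=\rank_R M_t/|G|$. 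Two small points you should make explicit in a write-up: the character-theoretic steps require $\kk$ to be a splitting field for $G$ (the paper's standing convention is that $\kk$ is algebraically closed, which the bare statement of the theorem omits), and the Frobenius twist on the $\kk$-structure of $U={}^e\!S\otimes_S\kk$ permutes the irreducible constituents --- harmless for the limit and for the multiplicity of the trivial representation, but worth a sentence.
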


\par We conclude with the notion of generalized F-signature, introduced by Hashimoto and Nakajima \cite{HN15}. 
Let $\mathcal{C}$ be a full subcategory of $\mmod{R}$, the category of finitely generated $R$-modules, such that $\mathcal{C}$ has the Krull-Remak-Schmidt property (KRS property for short) and $^e\!R\in\mathcal{C}$ for every $e\in\mathbb{N}$. 
For example, if $R$ is complete we may choose $\mathcal{C}=\mmod{R}$.
Let $M$ be an indecomposable object in $\mathcal{C}$, for each $e\in\mathbb{N}$ and $q=p^e$ we can consider the multiplicity $a_{q}$ of $M$ inside $^e\!R$. 
In other words, we can write $^e\!R=M^{a_q}\oplus N_q$, with the module $N_q$ containing no copies of $M$ as direct summands.
\begin{Def}[Hashimoto-Nakajima, \cite{HN15}]
 The \emph{generalized F-signature of $R$ with respect to $M$} is
 \begin{equation*}
 s(R,M):=\lim_{e\rightarrow+\infty}\frac{a_q}{q^{d}},
 \end{equation*}
 provided the limit exists.
\end{Def}

\begin{Rem}
 If the ring $R$ has finite F-representation type, then the generalized F-signature exists. 
 This is a consequence of \cite[Proposition 3.3.1]{SVB96} and of \cite[Theorem 3.11]{Yao05}. 
 Moreover, Seibert  \cite{Sei97} proved  that in this situation the F-signature and the Hilbert-Kunz multiplicity of $R$ are rational numbers.
\end{Rem}

\begin{Theorem}[Hashimoto-Nakajima, \cite{HN15}]\label{YusukeTheorem}
 Let $R=\kk\llbracket x_1,\dots x_n\rrbracket^G$ be a quotient singularity over an algebraically closed field $\kk$ of prime characteristic $p$. 
 Assume that the acting group $G\subseteq\Gl{n,\kk}$  is a small finite group such that that $(p,|G|)=1$. 
 Let $V_t$ be an irreducible $\kk$-representation of $G$ and let $M_t=\mathcal{A}(V_t)=(S\otimes_{\kk}V_t)^G$ be the corresponding $R$-module via Auslander functor.  
 Then the generalized F-signature of $R$ with respect to $M_t$ is
\begin{equation*}
 s(R,M_t)=\frac{\rank_R M_t}{|G|}.
\end{equation*}
\end{Theorem}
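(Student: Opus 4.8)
The plan is to reduce the computation of the multiplicity $a_q$ to a purely representation-theoretic quantity through the Auslander functor $\mathcal{A}$, and then to evaluate the resulting limit by character theory. Write $S=\kk\llbracket x_1,\dots,x_n\rrbracket$, $\mathfrak{m}_S=(x_1,\dots,x_n)$, and $S^{[q]}=(x_1^q,\dots,x_n^q)$ for the $q$-th Frobenius power. Since $\kk$ is perfect and $(p,|G|)=1$, the linear $G$-action on $S_1$ is diagonalizable, the ideal $S^{[q]}$ is $G$-stable (because $g\cdot x_i^q=(gx_i)^q$), and $S$ is free over $F^e(S)=\kk\llbracket x_1^q,\dots,x_n^q\rrbracket$ with basis the monomials $x^a$, $0\le a_i<q$. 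The first step is to locate $^e\!R$ in this picture: since $G$ acts by the same maps on $S$ and on $^e\!S$, one checks that $(^e\!S)^G=S^G=R$ as sets and that the induced module structure is exactly that of $^e\!R$, so $^e\!R=(^e\!S)^G$.

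The second step is the key structural claim: as a $G$-equivariant $S$-module one has $^e\!S\cong S\otimes_{\kk}W_e$, where $W_e:=S/S^{[q]}$ is the fiber of $^e\!S$ at the origin, a $G$-representation of dimension $q^n$. This follows from the equivalence between $S$-free $G$-equivariant modules and $\kk G$-modules (valid because $\kk G$ is semisimple) given by the fiber functor $(-)\otimes_S\kk$; here $\mathfrak{m}_S\cdot{}^e\!S=S^{[q]}$, so the fiber is precisely $W_e$. Applying $(-)^G$ and the definition of the Auslander functor yields $^e\!R=(S\otimes_{\kk}W_e)^G=\mathcal{A}(W_e)$. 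Decomposing $W_e=\bigoplus_t V_t^{\oplus m_{t,e}}$ into irreducibles and using that $\mathcal{A}$ is additive and carries $V_t$ to the indecomposable $M_t$ (Theorem \ref{Auslandertheorem}, where smallness of $G$ is used so that the correspondence is a bijection), together with Krull-Remak-Schmidt for $\mmod{R}$, we obtain $^e\!R=\bigoplus_t M_t^{\oplus m_{t,e}}$, and hence $a_q=m_{t,e}=[W_e:V_t]$.

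The third step computes this multiplicity by characters: $a_q=\frac{1}{|G|}\sum_{g\in G}\chi_{V_t}(g^{-1})\,\chi_{W_e}(g)$. For $g\in G$ with eigenvalues $\lambda_1(g),\dots,\lambda_n(g)$ on $S_1$ (roots of unity of order dividing $|G|$), the trace on $W_e=S/S^{[q]}$ evaluates in the monomial basis to $\chi_{W_e}(g)=\prod_{i=1}^{n}\big(1+\lambda_i(g)+\cdots+\lambda_i(g)^{q-1}\big)$, which equals $q$ in each factor with $\lambda_i(g)=1$ and a bounded quantity in each factor with $\lambda_i(g)\ne 1$; thus $\chi_{W_e}(g)=q^{\dim\ker(g-\mathrm{id})}\cdot O(1)$. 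The final step is to pass to the limit in $a_q/q^n$ (note $d=n$): the identity contributes $\frac{1}{|G|}\chi_{V_t}(\mathrm{id})\cdot q^n/q^n=\dim_{\kk}V_t/|G|$, while for $g\ne\mathrm{id}$ faithfulness forces $\dim\ker(g-\mathrm{id})\le n-1$, so each such term is $O(q^{-1})\to 0$. Combined with $\rank_R M_t=\dim_{\kk}V_t$, this gives $s(R,M_t)=\dim_{\kk}V_t/|G|=\rank_R M_t/|G|$.

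The main obstacle is the $G$-equivariant identification $^e\!S\cong S\otimes_{\kk}(S/S^{[q]})$ of the second step, which is where the structure of the Frobenius pushforward and the hypothesis $(p,|G|)=1$ (semisimplicity of $\kk G$ and $G$-stability of $S^{[q]}$) enter decisively; once it is in place, the passage to $a_q=[W_e:V_t]$ is formal and the limit is a routine estimate on the bounded non-identity character values, using only that the eigenvalues are roots of unity, so that $\lambda_i(g)^q-1$ stays bounded as $q=p^e\to+\infty$.
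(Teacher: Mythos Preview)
The paper does not supply its own proof of this statement; it is quoted from Hashimoto--Nakajima \cite{HN15} as background and motivation, so there is nothing to compare against. Your argument is correct and is in fact the standard route to the result: identify $^e\!R=\mathcal{A}(S/S^{[q]})$ via the $G$-equivariant isomorphism $^e\!S\cong S\otimes_{\kk}(S/S^{[q]})$, read off the multiplicity of $M_t$ as $[S/S^{[q]}:V_t]$ by Brauer characters, and observe that only the identity contribution survives the limit $q\to\infty$.

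Two minor points of care. First, you call the fiber functor $P\mapsto P\otimes_S\kk$ an \emph{equivalence} between $S$-free $G$-equivariant modules and $\kk G$-modules; the paper explicitly notes that $\mathcal{F}$ and $\mathcal{F}'$ are only an adjoint pair, not an equivalence of categories. What you actually use, and what is true, is that this functor gives a bijection on isomorphism classes of finitely generated projectives (projectives over $S*G$, which is complete with semisimple quotient $\kk G$, are determined by their reduction), and that suffices. Second, you cite Theorem~\ref{Auslandertheorem} for the facts that $\mathcal{A}$ sends irreducibles to pairwise non-isomorphic indecomposables and that $\rank_R\mathcal{A}(V)=\dim_{\kk}V$; in this paper that theorem and all of Section~\ref{sectionAuslandersymmetric} are formulated only for $S=\kk\llbracket u,v\rrbracket$, whereas the statement you are proving is in dimension $n$. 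The properties you need do hold for small $G\subseteq\Gl{n,\kk}$ in any dimension, but you should source them from a reference covering the general case rather than from the two-variable setup here.
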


The definition of the Auslander functor $\mathcal{A}$ and the relation between $\kk$-representations of $G$ and reflexive modules over the invariant ring $R$ will be reviewed in Section \ref{sectionAuslandersymmetric}.

\subsection{Symmetric signature}\label{subsectionsymmetricsignature}
\par For this section, let $(R,\mathfrak{m},\kk)$ be a local Noetherian $\kk$-domain of dimension $d$ (of any characteristic) and let $q$ be a natural number. 
We consider the top-dimensional syzygy module $\Syz^d_R(\kk)$ of the residue field, coming from a minimal free resolution of $\kk$, and the module of (K\"ahler) differentials of $R$ over $\kk$, denoted by $\Omega_{R/\kk}$.
\par It is an easy consequence of the Auslander-Buchsbaum-Serre theorem, and of Auslander-Buchsbaum formula that  $\Syz^d_R(\kk)$ is $R$-free if and only if $R$ is regular.
Similarly, under some mild conditions we have that $\Omega_{R/\kk}$ is $R$-free if and only if $R$ is regular (see e.g. \cite[Chapter II, Theorem 8.8]{Har77}).
 Moreover from the depth lemma follows that if $R$ is Cohen-Macaulay then $\Syz^d_R(\kk)$ is maximal Cohen-Macaulay, and in particular reflexive.
On the other hand $\Omega_{R/\kk}$ is not even reflexive in general (see e.g. \cite{Her78a}), so it is convenient to work with  its reflexive hull $\Omega_{R/\kk}^{**}$, which is called \emph{module of Zariski (or regular) differentials of $R$ over $\kk$}. 
\par Let $q$ be a natural number, we define the following two classes of $R$-modules.
\begin{equation*}
\begin{split}
\mathcal{S}^q:&=\left(\Sym_R^q\big(\Syz^d_R(\kk)\big)\right)^{**}, \\
  \mathcal{C}^q:&=\left(\Sym^q_R\big(\Omega_{R/\kk}^{**}\big)\right)^{**}=\left(\Sym^q_R\big(\Omega_{R/\kk}\big)\right)^{**}.
 \end{split}
 \end{equation*}

 \begin{Def}\label{defsymmetricsignature}
  The real numbers 
 \begin{equation*} 
  s_{\sigma}(R):=\lim_{N\rightarrow+\infty}\frac{\sum_{q=0}^N\freerank_R\mathcal{S}^q}{\sum_{q=0}^N\rank_R\mathcal{S}^q},
 \end{equation*}
 and 
 \begin{equation*}
  s_{d\sigma}(R):=\lim_{N\rightarrow+\infty}\frac{\sum_{q=0}^N\freerank_R\mathcal{C}^q}{\sum_{q=0}^N\rank_R\mathcal{C}^q}
 \end{equation*}
are called \emph{(syzygy) symmetric signature} of $R$, and \emph{differential symmetric signature} of $R$ respectively, provided the limits exist.
 \end{Def}

 \par In the rest of the paper we will study both the symmetric signature and the differential symmetric signature.
 For the clarity of exposition we will focus sometimes on one definition and we will say what applies also to the other.
 
 \par We don't know whether the limit defining the symmetric signature always exists.
To obtain an invariant which is for sure well defined and exists, one may replace the limits in Definition \ref{defsymmetricsignature} with limits inferior.
\par Moreover one may ask why should we consider the limits of Definition \ref{defsymmetricsignature}, insted of the simpler limit 
\begin{equation}\label{symmetricsimplerlimit}
\lim_{q\rightarrow+\infty} \frac{\freerank_R\mathcal{S}^q}{\rank_R\mathcal{S}^q}.
\end{equation}
The main reason is that the limit \eqref{symmetricsimplerlimit} does not exist even in simple cases, as we see in Example \ref{exampleAnlimitnotexists}.
However, when the simpler limit \eqref{symmetricsimplerlimit} exists, then also the symmetric signature exists and they coincide.

\begin{Lemma}
 Let $(a_n)_{n\in\mathbb{N}}$, $(b_n)_{n\in\mathbb{N}}$ be sequences of real numbers such that $b_n>0$ for all $n\in\mathbb{N}$. 
 Assume that the infinite series $\sum_{n\in\mathbb{N}}b_n$ diverges. If the limit $\lim_{n\rightarrow+\infty}\frac{a_n}{b_n}$ exists, then also the limit
 \begin{equation*}
  \lim_{n\rightarrow+\infty}\frac{\sum_{k=0}^na_k}{\sum_{k=0}^nb_k}
 \end{equation*}
exists and the two limits coincide.
\end{Lemma}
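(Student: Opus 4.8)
The plan is to recognize this as a form of the Stolz--Cesàro theorem for weighted Cesàro means, and to prove it directly by an $\varepsilon$-argument rather than invoking the theorem as a black box. Write $L:=\lim_{n\to+\infty} a_n/b_n$, and set $A_n:=\sum_{k=0}^n a_k$ and $B_n:=\sum_{k=0}^n b_k$. Since every $b_n>0$ and $\sum_n b_n$ diverges, the partial sums $B_n$ form a strictly increasing sequence with $B_n\to+\infty$; this positivity together with the divergence are the two hypotheses that will do all the work.

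First I would reformulate the claim in terms of the quantity $A_n-LB_n=\sum_{k=0}^n(a_k-Lb_k)$, so that proving $A_n/B_n\to L$ amounts to showing $(A_n-LB_n)/B_n\to 0$. The convergence $a_n/b_n\to L$ rewrites, using $b_n>0$, as $|a_n-Lb_n|<\varepsilon b_n$ for all $n\ge N_0(\varepsilon)$; this is exactly the form that pairs with the positivity of the $b_k$ to telescope into a bound on the tail of the sum.

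The key step is the split $A_n-LB_n=\sum_{k=0}^{N_0-1}(a_k-Lb_k)+\sum_{k=N_0}^n(a_k-Lb_k)$. The first summand is a fixed constant $C$ independent of $n$, while the second is bounded in absolute value by $\varepsilon\sum_{k=N_0}^n b_k\le \varepsilon B_n$. Dividing through by $B_n>0$ gives $|A_n/B_n-L|\le |C|/B_n+\varepsilon$, and then letting $n\to+\infty$ while using $B_n\to+\infty$ kills the term $|C|/B_n$, leaving $\limsup_n |A_n/B_n-L|\le\varepsilon$. Since $\varepsilon>0$ is arbitrary, the limit exists and equals $L$.

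The only genuine obstacle, and it is a mild one, is the initial constant $C$: the finitely many early terms $a_k-Lb_k$ are not controlled by the estimate $|a_k-Lb_k|<\varepsilon b_k$, but precisely because $\sum_k b_k$ diverges the denominator $B_n$ grows without bound and absorbs them. So the crux is to use the divergence hypothesis exactly once, in order to force $|C|/B_n\to 0$; everything else is routine bookkeeping.
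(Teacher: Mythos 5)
Your proof is correct and complete: the split of $\sum_{k=0}^n(a_k-Lb_k)$ into a fixed head $C$ and a tail bounded by $\varepsilon B_n$, followed by dividing by $B_n\to+\infty$, is exactly the standard Stolz--Ces\`aro argument, and you use both hypotheses ($b_n>0$ and divergence of $\sum_n b_n$) in the right places. The paper itself states this lemma without proof, so there is nothing to compare against; your write-up supplies precisely the argument the authors are implicitly invoking (with the tacit, and here harmless, assumption that the limit $L$ is finite, which is the only case needed in the paper).
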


\begin{Rem}
 The modules $\mathcal{S}_q$ and $\mathcal{C}_q$ are reflexive for every $q\geq0$. 
 If $R$ is Cohen-Macaulay of dimension $\leq2$, then $\mathcal{S}_q$ and $\mathcal{C}_q$  are also MCM.
\end{Rem}

\begin{Ex}\label{exampleregularsymmetricis1}
 If $R$ is a regular ring of dimension $d$, then  $\Syz^d_R(\kk)$ and $\Omega_{R/\kk}^{**}$ are free modules. 
 Then $\mathcal{S}^q$ and $\mathcal{C}^q$ are also free, therefore $\freerank_R\mathcal{S}^q=\rank_R\mathcal{S}^q$ and $\freerank_R\mathcal{C}^q=\rank_R\mathcal{C}^q$for every $q$. 
 It follows that the symmetric signatures are $s_{\sigma}(R)=1$ and $s_{d\sigma}(R)=1$.
 In particular, this happens if $R$ is of dimension $0$, since it is forced to be a field, being a domain.
\end{Ex}

\begin{Rem}
If $(R,\mathfrak{m},\kk)$ has dimension $1$, then the first syzygy of the residue field $\Syz_R^1(\kk)$ is just the maximal ideal $\mathfrak{m}$.
Since $\mathfrak{m}$ is an ideal it has rank $1$, therefore also all its symmetric powers have rank $1$, and the same holds for the reflexive hull.
In other words, we have $\rank_R\left(\Sym^q_R(\Syz^1_R(\kk)\right)^{**}=1$ for all $q\in\mathbb{N}$.
Therefore, for each $q$ we have only two possibilities: either $\mathcal{S}^q\cong R$ and $\freerank_R\mathcal{S}^q=\rank_R\mathcal{S}^q=1$ or $\mathcal{S}^q$ is not free and  $\freerank_R\mathcal{S}^q=0$.
\end{Rem}

\par Now assume in addition that the category $\Ref{R}$ of finitely generated reflexive modules has the KRS property. 
We fix an indecomposable object $M$ in $\Ref{R}$, then for every $q\in\mathbb{N}$ we have a unique decomposition
\begin{equation*}
 \mathcal{S}^q=M^{a_q}\oplus N_q,
\end{equation*}
where the module $N_q$ contains no copy of $M$ as direct summand. 
The natural number $a_q$ is called \emph {the multiplicity of $M$ in $\mathcal{S}^q$}.

\begin{Def}\label{defgeneralizedsymmetric}
 The number 
 \begin{equation*}
  s_{\sigma}(R,M):=\lim_{N\rightarrow+\infty}\frac{\sum_{q=0}^Na_q}{\sum_{q=0}^N\rank_R\mathcal{S}^q}
 \end{equation*}
is called \emph{generalized symmetric signature of $R$ with respect to $M$}, provided the limit exists.
\end{Def}

\par We can define the symmetric signature also for $\mathbb{N}$-graded rings.
\par Let $R$ be a standard graded Noetherian $\kk$-domain of dimension $d$. 
We work in the category $\mmodgr{R}$ of finitely generated graded $R$-modules.
In this category we consider the graded module $\Syz^d_R(\kk)$ coming from a minimal graded free resolution of $\kk$ as $R$-module, and the differential module $\Omega_{R/\kk}$, which is also naturally graded.
Their $q$-th reflexive symmetric powers $\mathcal{S}^q$ and $\mathcal{C}^q$ are again graded modules. 
We also introduce a graded version of the free rank.
For any finitely generated graded $R$-module $M$, we define the \emph{graded free rank} of $M$ as
  \begin{equation*}
  \begin{split}
 \freerank_R^{\mathrm{gr}}(M):=\max\{n: \ \exists &\text{ a homogeneous of degree $0$ split surjection }\varphi: M\twoheadrightarrow F, \\ &\text{ with } F \text{ free graded $R$-module of rank } n\}. 
 \end{split}
 \end{equation*}
 
\par The main reason to introduce this definition is that we want work in the Krull-Schmidt category $\mmodgr{R}$ of finitely generated graded $R$-modules, whose maps are $R$-linear homomorphisms of degree $0$. 
Thus, the previous definition is more natural in this setting.

\begin{Def}\label{defgradedsymmetric}
The \emph{graded (syzygy) symmetric signature} of $R$ is defined by the limit
\begin{equation*}
  s_{\sigma}(R):=\lim_{N\rightarrow+\infty}\frac{\sum_{q=0}^N\freerank^{\mathrm{gr}}_R\mathcal{S}^q}{\sum_{q=0}^N\rank_R\mathcal{S}^q},
 \end{equation*}
provided it exists.
\end{Def}
Replacing the module $\mathcal{S}^q$ with $\mathcal{C}^q$ in the definition above, we define also the graded version of the differential symmetric signature.

\begin{Rem}
 Notice that in the definition of $s_{\sigma}(R)$ for graded rings we consider the  graded free rank instead of the free rank as in the local setting.
 Therefore some differences may occur.
 For example, one should remind that all direct summands of the form $R(a)$ for some integer $a$ contribute to the graded free rank part of the module. 
\end{Rem}

\section{Auslander correspondence and reflexive symmetric powers}\label{sectionAuslandersymmetric}
We fix the setting for this section.
Let $S=\kk\llbracket u,v\rrbracket$ be a power series ring in two variables over an algebraically closed field $\kk$.
Let $G\subseteq\Gl{2,\kk}$ be a finite group which is small, i.e. it contains no pseudo-reflections, and such that the characteristic of $\kk$ and the order $|G|$ of $G$ are coprime.
The group $G$ acts on $S$ via linear changes of variables, and we denote by $R:=S^G$ the invariant ring under this action.
The invariant ring $R$ is called \emph{quotient singularity} and has the following properties: it is a Noetherian local domain of dimension $2$, normal, Cohen-Macaulay, complete, and an isolated singularity (see e.g. \cite[Theorem 4.1]{BD08}).
Moreover, Watanabe \cite{Wat74a, Wat74b} proved that $R$ is Gorenstein if and only if $G\subseteq\Sl{2,\kk}$.
In this case $R$ is called \emph{special quotient singularity}.
If $G$ is a cyclic group $R$ is called \emph{cyclic quotient singularity}.
The \emph{Kleinian singularities}, or \emph{ADE singularities} are special quotient singularities, which actually coincides with them if the base field has characteristic zero.

\par In Section \ref{sectionKleinian} we will compute the symmetric signature of Kleinian singularities.
In this section we will review the theory of Auslander correspondence between linear $\kk$-representations of $G$ and MCM $R$-modules, and we will prove that the Auslander functor commutes with reflexive symmetric powers.
For a more detailed survey of these facts the reader may consult the books of Yoshino \cite{Yos90}, and of Leuschke and Wiegand \cite{LW12}, or the second author's Ph.D. thesis \cite{Cam16}.

\subsection{The Auslander correspondence}
\par We denote by $\kk[G]$ the \emph{group ring} of $G$ over $\kk$, that is the $\kk$-vector space with basis $\{e_g: g\in G\}$, and with multiplication given on the basis elements by group multiplication $e_ge_h=e_{gh}$ and extended linearly to arbitrary elements.
The category $\mmod{\kk[G]}$ of finitely generated left modules over the group ring can be identified with the category of linear $\kk$-representations of $G$.
For this reason we will use indifferently the terminology of representation theory or the terminology of module theory over $\kk[G]$.

\par The \emph{skew group ring} of $G$ and $S$ is denoted by $S*G$ and it is a free $S$-module on the elements of $G$ $S*G=\displaystyle\bigoplus_{g\in G}S g$, with a multiplicative structure given by $(s g)(t h):=sg(t)\cdot gh$ for all $s,t\in S$, $g,h\in G$.

\par The group ring and the skew group ring are in general non-commutative, so by $\kk[G]$-module and $S*G$-module we will always mean left module.
We point out that an $S*G$-module is just an $S$-module with a compatible action of $G$: $g(sm)=g(s)g(m)$ for all $g\in G$, $s\in S$ and $m\in M$. 
The ring $S$ is clearly an $S*G$-module, but not every $S$-module has a natural $S*G$-module structure.

\par There is a natural functor $\mathcal{F}$ between the category $\mmod{\kk[G]}$ and the category $\proj{S*G}$ of finitely generated projective $S*G$-modules, namely $\mathcal{F}(V):=S\otimes_{\kk}V$ for every object $V$ in $\mmod{\kk[G]}$.
The functor $\mathcal{F}$ has a right adjoint given by $\mathcal{F}'(P):=P\otimes_{S}\kk$.
The functors $\mathcal{F}$ and $\mathcal{F}'$ are an adjoint pair \cite[Lemma 10.1]{Yos90}, but in general they are not an equivalence of categories.

\par We remark that an $S*G$-module is $S*G$-projective if and only if it is $S$-projective, hence $S$-free, since $S$ is local.

\par We define two other functors $\mathcal{G}:\proj{S*G}\rightarrow\Ref{R}$, $\mathcal{G}(M)=M^G$, and $\mathcal{G}':\Ref{R}\rightarrow\proj{S*G}$, $\mathcal{G}'(N)=(S\otimes_RN)^{**}$, where $(-)^{*}:=\Hom_S(-,S)$, and $\Ref{R}$ denotes the category of finitely generated reflexive $R$-modules.
Since $R$ has dimension $2$ the category $\Ref{R}$ coincides with the category $\cmod{R}$ of maximal Cohen-Macaulay $R$-modules, and also with the category $\mathrm{Add}_R(S)$ of $R$-direct summands of $S$ by a result of Herzog \cite{Her78b}, that is $\Ref{R}=\cmod{R}=\mathrm{Add}_R(S)$.

\par Auslander \cite{Aus86b} proved that the functors $\mathcal{G}$ and $\mathcal{G}'$ are an equivalence of categories.
We will give a geometric proof of this fact, based on the following ring version of the Speiser's Lemma of Galois theory due to Auslander and Goldman \cite{AG60}, and Chase, Harrison, and Rosenberg \cite{CHR65}.

\begin{Lemma}[Speiser's Lemma] \label{speiserlemma}
Let $A$ be a commutative domain, $G$ a finite group of ring automorphisms of $A$ and denote by $B=A^G$ the invariant ring. 
Then the following facts are equivalent.
\begin{compactenum}[1)]
 \item $A$ is a separable $B$-algebra.
 \item For every $g\neq \mathrm{id}_G$ in $G$, and every maximal ideal $\mathfrak{p}$ in $A$, there exists $a\in A$ such that $a-g(a)\not\in\mathfrak{p}$.
 \item The map $\varphi:A\otimes_BM^G\rightarrow M$ given by $\varphi(a\otimes m)=am$ is an isomorphism for every $A*G$-module $M$.
\end{compactenum}
 \end{Lemma}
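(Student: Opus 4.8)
The plan is to prove the cyclic chain of implications $1)\Rightarrow 2)\Rightarrow 3)\Rightarrow 1)$.

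For $1)\Rightarrow 2)$ I would use the separability idempotent. Separability means the multiplication map $\mu\colon A\otimes_B A\to A$ splits as a map of $A\otimes_B A$-modules, equivalently there is an idempotent $e=\sum_i a_i\otimes a_i'\in A\otimes_B A$ with $\mu(e)=\sum_i a_ia_i'=1$ and $(a\otimes 1)e=(1\otimes a)e$ for all $a\in A$. For each $g\in G$ consider the $B$-algebra map $\psi_g\colon A\otimes_B A\to A$, $x\otimes y\mapsto x\,g(y)$, and set $\eta_g:=\psi_g(e)=\sum_i a_i\,g(a_i')$. Applying $\psi_g$ to the idempotent identity gives $(a-g(a))\,\eta_g=0$ for every $a\in A$, while $\psi_{\mathrm{id}}$ gives $\eta_{\mathrm{id}}=1$. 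Now suppose $2)$ fails, so there are $g\neq\mathrm{id}$ and a maximal ideal $\mathfrak p$ with $a-g(a)\in\mathfrak p$ for all $a$; reducing $\eta_g$ modulo $\mathfrak p$ and using $g(a_i')\equiv a_i'$ yields $\eta_g\equiv\sum_i a_i a_i'=1\pmod{\mathfrak p}$, so $\eta_g\neq 0$. On the other hand $g\neq\mathrm{id}$ gives some $a_0$ with $a_0-g(a_0)\neq 0$, a nonzerodivisor in the domain $A$, so $(a_0-g(a_0))\eta_g=0$ forces $\eta_g=0$, a contradiction.

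The heart of the argument is $2)\Rightarrow 3)$, which I would prove by first manufacturing a Galois coordinate system. Condition $2)$ says precisely that for each $g\neq\mathrm{id}$ the ideal generated by $\{a-g(a):a\in A\}$ lies in no maximal ideal, hence equals $A$. With $\psi_g$ as above one checks $\psi_g=\psi_{\mathrm{id}}\circ(\mathrm{id}\otimes g)$, so $\ker\psi_g=(\mathrm{id}\otimes g^{-1})(\ker\psi_{\mathrm{id}})$; since $\ker\psi_{\mathrm{id}}=\ker\mu$ is generated by the elements $a\otimes 1-1\otimes a$ and $\psi_h(a\otimes 1-1\otimes a)=a-h(a)$ runs over generators of the unit ideal for $h\neq\mathrm{id}$, a short computation shows $\ker\psi_{\mathrm{id}}+\ker\psi_h=A\otimes_B A$, and twisting by the automorphisms $\mathrm{id}\otimes g$ upgrades this to pairwise comaximality of all the $\ker\psi_g$. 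The Chinese Remainder Theorem then makes $h:=(\psi_g)_g\colon A\otimes_B A\to\prod_{g\in G}A$ surjective, so the element that is $1$ in the $\mathrm{id}$-slot and $0$ elsewhere is hit: there exist $x_i,y_i\in A$ with $\sum_i x_i\,g(y_i)=\delta_{\mathrm{id},g}$. Given such coordinates, for any $A*G$-module $M$ I would define $\psi\colon M\to A\otimes_B M^G$ by $\psi(m)=\sum_i x_i\otimes\sum_{g\in G}g(y_i m)$, noting $\sum_g g(y_i m)\in M^G$, and verify directly that $\psi$ is a two-sided inverse of $\varphi$: the computation $\varphi(\psi(m))=\sum_g\big(\sum_i x_i g(y_i)\big)g(m)=m$ and the analogous check of $\psi\varphi=\mathrm{id}$ both collapse by the identity $\sum_i x_i g(y_i)=\delta_{\mathrm{id},g}$.

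Finally, for $3)\Rightarrow 1)$ I would feed a single well-chosen module into $3)$. Take $M=\mathrm{Map}(G,A)$ with $A$ acting by $(a\cdot f)(\sigma)=a\,f(\sigma)$ and $G$ by $(g\cdot f)(\sigma)=g\big(f(g^{-1}\sigma)\big)$; this is an $A*G$-module whose invariants are $M^G\cong A$ via $c\mapsto(\sigma\mapsto\sigma(c))$, and under this identification $\varphi$ becomes exactly the ring homomorphism $h\colon A\otimes_B A\to\prod_{g\in G}A$ above. Condition $3)$ thus says $h$ is an isomorphism of $A\otimes_B A$-algebras, so the multiplication map $\mu=\psi_{\mathrm{id}}$ is the projection of the product ring $\prod_g A$ onto its $\mathrm{id}$-factor; this projection is split by the inclusion of the corresponding idempotent ideal, exhibiting $A$ as a direct summand, hence a projective module, over $A\otimes_B A$. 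That is exactly the separability of $A$ over $B$, closing the cycle. The only delicate point is the comaximality/Chinese Remainder step in $2)\Rightarrow 3)$; everything else is a direct verification.
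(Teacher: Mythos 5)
The paper does not prove this lemma at all: it is quoted as a known result with references to Auslander--Goldman and Chase--Harrison--Rosenberg, so there is no in-paper argument to compare against. Your proof is correct and is essentially the classical Chase--Harrison--Rosenberg argument for the equivalent characterizations of Galois extensions of commutative rings: the separability idempotent together with the domain hypothesis (so that $a_0-g(a_0)$ is a nonzerodivisor) gives $1)\Rightarrow 2)$; the comaximality of the ideals $\ker\psi_g$ plus the Chinese Remainder Theorem produces Galois coordinates $\sum_i x_i\,g(y_i)=\delta_{\mathrm{id},g}$, from which the explicit inverse of $\varphi$ follows; and evaluating $3)$ on $\mathrm{Map}(G,A)$ identifies $\varphi$ with the ring map $(\psi_g)_g$ and recovers separability. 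All three steps check out. Two small points you should spell out if you write this up: the verification $\psi\circ\varphi=\mathrm{id}$ on an elementary tensor $a\otimes m$ uses that $\sum_{g}g(y_ia)$ lies in $B=A^G$ so that it can be moved across $\otimes_B$; and the upgrade from the comaximality of $\ker\psi_{\mathrm{id}}$ with each $\ker\psi_h$ to pairwise comaximality of all $\ker\psi_g$ requires noting that $\mathrm{id}\otimes g^{-1}$ is a ring automorphism of $A\otimes_BA$ (well defined because $g$ fixes $B$) carrying $\ker\psi_h$ to $\ker\psi_{hg}$. Neither is a gap, just a line of detail each.
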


\begin{Lemma}\label{M=SxMGlemmadim2}
Let $S=\kk\llbracket u,v\rrbracket$, let $G\subseteq\Gl{2,\kk}$ be a finite small group such that $(\chara\kk,|G|)=1$, and let $R=S^G$.
 Let $M$ be a projective $S*G$-module and consider the canonical map
 \begin{equation*}
 \delta: S\otimes_R M^G\rightarrow M,
 \end{equation*}
 given by $\delta(s\otimes m)=sm$. Then the following facts hold. 
 \begin{compactenum}[1)]
  \item $\delta$ induces an isomorphism of coherent sheaves $\reallywidetilde{S\otimes_R M^G}\rightarrow\widetilde{M}$ on the punctured spectrum $U'=\mathrm{Spec} \,S\setminus\{\mathrm{m}\}$, where $\mathfrak{m}=(u,v)$.
  \item $\delta$ induces an isomorphism of $S*G$-modules 
  \begin{equation*}
   (S\otimes_R M^G)^{**}\cong M.
 \end{equation*}
 \end{compactenum}
\end{Lemma}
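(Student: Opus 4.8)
The plan is to deduce both statements from Speiser's Lemma (Lemma \ref{speiserlemma}) together with the observation that the smallness of $G$ forces the action of $G$ on the punctured spectrum $U'$ to be free. The point is that $S$ is \emph{not} separable over $R=S^G$ because of the ramification at the origin, but this is the only obstruction: after removing $\mathfrak{m}$ the extension becomes separable, and Speiser's Lemma applies. To prove 1) I would localize. It suffices to check that $\delta_\mathfrak{p}$ is an isomorphism of $S_\mathfrak{p}$-modules for every prime $\mathfrak{p}\neq\mathfrak{m}$ of $S$. Set $\mathfrak{q}:=\mathfrak{p}\cap R$; since $S$ is module-finite over $R$ and $\mathfrak{m}$ is the unique prime of $S$ lying over the maximal ideal $\mathfrak{m}_R$, the assumption $\mathfrak{p}\neq\mathfrak{m}$ gives $\mathfrak{q}\neq\mathfrak{m}_R$. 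Because $|G|$ is invertible and $R\setminus\mathfrak{q}\subseteq R=S^G$ is a $G$-stable multiplicative set, taking invariants commutes with this localization, so after localizing at $R\setminus\mathfrak{q}$ the map $\delta$ becomes the canonical map
\begin{equation*}
\varphi\colon S_\mathfrak{q}\otimes_{R_\mathfrak{q}}(M_\mathfrak{q})^G\longrightarrow M_\mathfrak{q},\qquad \varphi(s\otimes m)=sm,
\end{equation*}
of Speiser's Lemma for the extension $R_\mathfrak{q}\subseteq S_\mathfrak{q}$. Thus it is enough to show that $S_\mathfrak{q}$ is separable over $R_\mathfrak{q}$, i.e. condition 2) of Lemma \ref{speiserlemma}.

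Here I would use smallness. For $g\neq\mathrm{id}$ let $I_g\subseteq S$ be the ideal generated by the elements $a-g(a)$, so that $V(I_g)$ is the fixed locus of $g$. If $I_g$ had height $1$, then $g$ would fix a divisor and hence be a pseudo-reflection, contradicting the smallness of $G$; therefore $\height(I_g)\geq 2$, and since $S$ is a two-dimensional local domain this forces $V(I_g)=\{\mathfrak{m}\}$. Consequently, for any prime $\mathfrak{p}'\neq\mathfrak{m}$ of $S$ we have $I_g\not\subseteq\mathfrak{p}'$, which is exactly condition 2) at every maximal ideal of $S_\mathfrak{q}$ (all of which lie over $\mathfrak{q}\neq\mathfrak{m}_R$ and hence differ from $\mathfrak{m}$). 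By Lemma \ref{speiserlemma} the map $\varphi$ is an isomorphism, and localizing once more at $\mathfrak{p}$ shows that $\delta_\mathfrak{p}$ is an isomorphism. This proves 1).

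For 2) I would pass to reflexive hulls. Since $M$ is a projective $S*G$-module it is $S$-free, hence reflexive, so $M^{**}\cong M$. Over the two-dimensional regular local ring $S$ the reflexive hull of any finitely generated module $N$ can be computed as the module of global sections over the punctured spectrum, $N^{**}\cong\Gamma(U',\widetilde N)$ (for free $M$ one has $\Gamma(U',\widetilde M)=M$ because $\depth_\mathfrak{m}S=2$). Applying $\Gamma(U',-)$ to the isomorphism of sheaves from part 1) yields
\begin{equation*}
(S\otimes_R M^G)^{**}\cong\Gamma(U',\widetilde{S\otimes_R M^G})\xrightarrow{\ \sim\ }\Gamma(U',\widetilde M)=M^{**}\cong M.
\end{equation*}
This isomorphism is the one induced by $\delta$, namely $\delta^{**}$, and it is $S*G$-linear because $\delta$ is $G$-equivariant (for $m\in M^G$ one has $g(sm)=g(s)g(m)=g(s)m$) and because $(-)^{**}=\Hom_S(\Hom_S(-,S),S)$ is an $S*G$-equivariant functor, $G$ acting on $S$ by ring automorphisms.

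The main obstacle is the verification in part 1) that the localized extension $R_\mathfrak{q}\subseteq S_\mathfrak{q}$ is separable: this is where the geometric meaning of smallness (the fixed loci of non-trivial group elements have codimension at least two) must be converted into the purely algebraic condition 2) of Speiser's Lemma, and where one must make sure that forming $G$-invariants commutes with localization at $R\setminus\mathfrak{q}$. Once part 1) is in place, part 2) is a formal consequence of the standard theory of reflexive modules over a two-dimensional regular local ring, the only care needed being to keep track of the $S*G$-equivariance throughout.
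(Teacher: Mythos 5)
Your proof is correct and follows essentially the same route as the paper's: both reduce part 1) to condition 2) of Speiser's Lemma on the punctured spectrum, using smallness of $G$ to rule out fixed points of non-identity elements away from $\mathfrak{m}$, and then deduce part 2) by identifying reflexive hulls with sections over $U'$ (the paper covers $U'$ by the opens $D(f_i)$ for finitely many $f_i\in R$ generating $\mathfrak{m}$ up to radical rather than localizing at individual primes, but this is immaterial). Your height argument for the fixed-point ideal $I_g$ and the explicit check of $S*G$-linearity in part 2) are slightly more detailed than the paper's, which simply asserts that the action of a small group on $U'$ is fixpoint-free.
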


\begin{proof}
The ring $S$ is a Cohen-Macaulay isolated singularity, so coherent sheaves associated to MCM $S$-modules are locally free on the punctured spectrum. 
Moreover we recall that projective $S$-modules are MCM in this case. 
Therefore it follows that property \textit{1)} implies property \textit{2)}.
\par We prove \textit{1)}.  Let $f_1,\dots,f_{\mu}$ be elements in $R$ which generate $\mathfrak{m}$ up to radical, that is $\sqrt{(f_1,\dots,f_{\mu})}=\mathfrak{m}$ as ideals in $S$. 
These elements exist, since the ring extension $R\hookrightarrow S$ is finite. 
Thus, we have an open covering $U'=\bigcup_{i}D(f_i)$.
\par We claim that for every $f=f_i$ the induced map 
\begin{equation*}
  \delta_f: S_f\otimes_{R_f}M_f^G\rightarrow M_f
 \end{equation*}
is an isomorphism. We prove the claim, then the Lemma will follow from it. 
In fact, because we have a global homomorphism, we get a sheaf homomorphism defined on $U'$ which is locally an isomorphism, so it is forced to be an isomorphism on $U'$.
\par To prove the claim, let $f$ be one of the $f_i$'s.
We check that condition \textit{2)} of Lemma \ref{speiserlemma} is true for $A=S_f$ and $B=R_f$.
Let $\mathfrak{p}$ be a maximal ideal of $A$.
If $a-g(a)\in\mathfrak{p}$ for every $a\in A$ and every $g\in G$, then we have $a\in\mathfrak{p}$ if and only if $g(a)\in\mathfrak{p}$, which is equivalent to say that $\mathfrak{p}$ is a fix point for the action of $G$ on $A$.
On the other hand, since $G$ is small, its action on $U'$ is fixpoint-free, so we get a contradiction.
Therefore $\delta_f$ is an isomorphism by Lemma \ref{speiserlemma} above.
\end{proof}

 \begin{Theorem}\label{RandSGmodules}
 The functors $\mathcal{G}:\proj{S*G}\rightarrow\Ref{R}$ and $\mathcal{G}':\Ref{R}\rightarrow\proj{S*G}$ give an equivalence of categories
\begin{equation*}
 \proj{S*G}\cong\Ref{R}.
\end{equation*}
\end{Theorem}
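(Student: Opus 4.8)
The plan is to verify that $\mathcal{G}$ and $\mathcal{G}'$ are well defined and then to exhibit natural isomorphisms $\mathcal{G}'\circ\mathcal{G}\cong\mathrm{id}$ and $\mathcal{G}\circ\mathcal{G}'\cong\mathrm{id}$. First I would check well-definedness. If $M$ is a projective $S*G$-module, then $M$ is $S$-free, hence maximal Cohen-Macaulay as an $R$-module, and the Reynolds operator $\frac{1}{|G|}\sum_{g\in G}g$ (available since $(\chara\kk,|G|)=1$) splits off $M^G$ as an $R$-direct summand of $M$; thus $M^G\in\mathrm{Add}_R(S)=\Ref{R}$, so $\mathcal{G}$ lands in $\Ref{R}$. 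Conversely, if $N$ is reflexive over $R$, then $(S\otimes_R N)^{**}$ is reflexive over the two-dimensional regular local ring $S$, hence $S$-free, and it carries the $G$-action induced from the action on the first factor of $S\otimes_R N$ (functoriality of the reflexive hull makes this action $G$-equivariant); therefore it is a projective $S*G$-module and $\mathcal{G}'$ is well defined.

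The first composition is immediate: for a projective $S*G$-module $M$ we have $\mathcal{G}'(\mathcal{G}(M))=(S\otimes_R M^G)^{**}$, which is naturally isomorphic to $M$ by part \textit{2)} of Lemma \ref{M=SxMGlemmadim2}. Naturality holds because the isomorphism there is induced by the canonical map $\delta$, which is functorial in $M$.

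The essential content is the other composition. For $N\in\Ref{R}$ set $M:=\mathcal{G}'(N)=(S\otimes_R N)^{**}$ and consider the natural $R$-linear map $\eta_N\colon N\to M^G=\mathcal{G}(\mathcal{G}'(N))$ obtained by sending $n\mapsto 1\otimes n$ (which is $G$-invariant) and composing with the reflexive-hull map $S\otimes_R N\to(S\otimes_R N)^{**}$. Both source and target are reflexive $R$-modules, so $\eta_N$ is an isomorphism as soon as it is an isomorphism of coherent sheaves on the punctured spectrum $U=\mathrm{Spec}\,R\setminus\{\mathfrak{m}\}$; indeed a reflexive module over the normal domain $R$ equals the global sections over $U$ of its associated sheaf, the complement having codimension $2$. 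To check the sheaf statement I would pull back along the finite map $\pi\colon U'\to U$, where $U'=\mathrm{Spec}\,S\setminus\{\mathfrak{m}_S\}$. Because $R$ is an isolated singularity, $\widetilde{N}$ is locally free on $U$, so its pullback $\widetilde{S\otimes_R N}$ is locally free on $U'$ and hence already reflexive there; thus the reflexive hull does not change it on $U'$, giving $\widetilde{M}|_{U'}\cong\pi^*(\widetilde{N}|_U)$ as $G$-equivariant sheaves.

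Finally, since $G$ is small its action on $U'$ is fixpoint-free, so $\pi\colon U'\to U$ is an \'etale $G$-torsor (exactly the situation governed by Speiser's Lemma \ref{speiserlemma}). By descent along this torsor, taking $G$-invariants of a pullback recovers the original sheaf, $(\pi_*\pi^*\mathcal{F})^G\cong\mathcal{F}$; applying this to $\mathcal{F}=\widetilde{N}|_U$ identifies $\widetilde{M^G}|_U\cong\widetilde{N}|_U$, and one checks this identification is precisely $\eta_N$. Hence $\eta_N$ is an isomorphism on $U$, therefore an isomorphism, and its construction is manifestly natural in $N$. The main obstacle I anticipate is the bookkeeping in this last step: justifying that the algebraic operation $(-)^G$ on modules commutes with sheafification and restriction to $U$, and that invariants along the $G$-torsor genuinely recover $N$ rather than merely a subsheaf. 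This is where fixpoint-freeness of the $G$-action, equivalently condition \textit{2)} of Speiser's Lemma, does the real work.
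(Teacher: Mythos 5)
Your argument is correct and follows exactly the geometric route the paper sets up around this theorem: one composite is Lemma \ref{M=SxMGlemmadim2}\textit{(2)}, and the other is Galois descent along the free $G$-action on the punctured spectrum, i.e.\ Speiser's Lemma \ref{speiserlemma} combined with Lemma \ref{reflexivehullpuncturedlemma} and the identification of Remark \ref{identificationremark}. The paper itself only cites Yoshino \cite[Proposition 10.9]{Yos90} at this point, so your write-up supplies in full (and correctly) the unit isomorphism $N\cong\big((S\otimes_R N)^{**}\big)^G$ that the paper delegates to the reference.
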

\begin{proof}
See \cite[Proposition 10.9]{Yos90}.
\end{proof}

\par We compose the functors $\mathcal{F}$ and $\mathcal{G}$ and we obtain a functor $\mathcal{A}:\mmod{\kk[G]}\rightarrow\Ref{R}$, $\mathcal{A}(V):=(S\otimes_{\kk}V)^G$ called \emph{Auslander functor}.
The Auslander functor has a right adjoint given by $\mathcal{A}'(N):=(S\otimes_RN)^{**}\otimes_{S}\kk$, for every reflexive $R$-module $N$.
These functors give a one-one correspondence between $\kk$-representations of $G$ and maximal Cohen-Macaulay modules over $R$ called \emph{Auslander correspondence}.
We collect the properties of the functor $\mathcal{A}$ and of the Auslander correspondence in the following theorem and corollary.

\begin{Theorem}[Auslander correspondence]\label{Auslandertheorem}
  The functors $\mathcal{A}:\mmod{\kk[G]}\rightarrow\Ref{R}$ and  $\mathcal{A}':\Ref{R}\rightarrow\mmod{\kk[G]}$ have the following properties.
 \begin{compactenum}[1)]
  \item $\mathcal{A}(V)\cong\mathcal{A}(W)$ if and only if $V\cong W$.
  \item $\mathcal{A}(V)$ is indecomposable in $\Ref{R}$ if and only $V$ is an irreducible representation.
  \item $\mathcal{A}(\Hom_{\kk}(V,W))\cong\Hom_R(\mathcal{A}(V),\mathcal{A}(W))$ for every $V,W\in\mmod{\kk[G]}$.
  \item $\mathcal{A}(V\otimes_{\kk}W)\cong\left(\mathcal{A}(V)\otimes_R\mathcal{A}(W)\right)^{**}$ for every $V,W\in\mmod{\kk[G]}$.
  \item If $V_0$ is the trivial representation then $\mathcal{A}(V_0)=R$.
  \item $\rank_R\mathcal{A}(V)=\dim_{\kk}V$ for every $\kk[G]$-module $V$.
 \end{compactenum}
\end{Theorem}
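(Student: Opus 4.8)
The plan is to exploit the factorization $\mathcal{A} = \mathcal{G} \circ \mathcal{F}$ and to transport every assertion through the equivalence $\mathcal{G}$ of Theorem \ref{RandSGmodules}, so that most of the work reduces to elementary properties of the base-change functor $\mathcal{F}(V) = S \otimes_\kk V$. Two facts about $\mathcal{F}$ will be used repeatedly. First, $\mathcal{F}$ is monoidal: there is a natural $S*G$-isomorphism $\mathcal{F}(V \otimes_\kk W) \cong \mathcal{F}(V) \otimes_S \mathcal{F}(W)$, since both sides are $S \otimes_\kk (V \otimes_\kk W)$ with the diagonal $G$-action. Second, $\mathcal{F}' \circ \mathcal{F} \cong \mathrm{id}$, because $(S \otimes_\kk V) \otimes_S \kk \cong V$ as $\kk[G]$-modules. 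Combining the latter with $\mathcal{G}' \circ \mathcal{G} \cong \mathrm{id}$ (Lemma \ref{M=SxMGlemmadim2}, part 2) and with $\mathcal{A}' = \mathcal{F}' \circ \mathcal{G}'$ yields $\mathcal{A}' \circ \mathcal{A} \cong \mathrm{id}$, which will drive several of the statements.

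I would first dispatch the three direct properties. Property 5) is immediate: $\mathcal{A}(V_0) = (S \otimes_\kk \kk)^G = S^G = R$. Property 1) follows at once from $\mathcal{A}' \circ \mathcal{A} \cong \mathrm{id}$, namely if $\mathcal{A}(V) \cong \mathcal{A}(W)$ then applying $\mathcal{A}'$ gives $V \cong W$, and the converse is functoriality. For Property 6) I would compute the rank after base change to $S$: by Lemma \ref{M=SxMGlemmadim2} (part 1) the canonical map $S \otimes_R \mathcal{A}(V) \to \mathcal{F}(V)$ is an isomorphism on the punctured spectrum, hence at the generic point, so $L \otimes_R \mathcal{A}(V) \cong L \otimes_\kk V \cong L^{\dim_\kk V}$ with $L = \mathrm{Frac}(S)$. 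Since $L \otimes_R \mathcal{A}(V) = L \otimes_{\mathrm{Frac}(R)} (\mathrm{Frac}(R) \otimes_R \mathcal{A}(V))$, comparing $L$-dimension with $\mathrm{Frac}(R)$-dimension gives $\rank_R \mathcal{A}(V) = \dim_\kk V$.

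For Property 3) I would identify $S*G$-homomorphisms with $G$-invariant $S$-homomorphisms and then invoke full faithfulness of $\mathcal{G}$. Since $V$ is finite dimensional, $\Hom_S(\mathcal{F}(V), \mathcal{F}(W)) \cong S \otimes_\kk \Hom_\kk(V,W)$ $G$-equivariantly, so taking $G$-invariants gives $\Hom_{S*G}(\mathcal{F}(V), \mathcal{F}(W)) \cong (S \otimes_\kk \Hom_\kk(V,W))^G = \mathcal{A}(\Hom_\kk(V,W))$; because $\mathcal{G}$ is fully faithful, the left-hand side is $\Hom_R(\mathcal{A}(V), \mathcal{A}(W))$. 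Property 2) I would deduce from Maschke's theorem, which makes $\kk[G]$ semisimple (here $(\chara\kk, |G|)=1$), so for $\kk[G]$-modules irreducible is the same as indecomposable, together with the fact that $\mathcal{F}$ preserves and reflects indecomposability: a direct sum splitting of $\mathcal{F}(V)$ maps under $\mathcal{F}'$ to a splitting of $V$ with nonzero summands, since a nonzero projective $S*G$-module is nonzero free over the local ring $S$ and hence survives $- \otimes_S \kk$, while $\mathcal{F}$ visibly preserves direct sums. As $\mathcal{G}$ is an equivalence it preserves and reflects indecomposability, so $\mathcal{A}(V)$ is indecomposable if and only if $V$ is irreducible.

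The main obstacle is Property 4), which says precisely that $\mathcal{G}$ intertwines the tensor product $\otimes_S$ on $\proj{S*G}$ with the reflexive tensor product on $\Ref{R}$. Writing $M = \mathcal{F}(V)$ and $N = \mathcal{F}(W)$, monoidality of $\mathcal{F}$ gives $\mathcal{A}(V \otimes_\kk W) \cong (M \otimes_S N)^G$, and the target is reflexive, being in the image of $\mathcal{G}$. It remains to show that the natural $R$-linear map $\theta: M^G \otimes_R N^G \to (M \otimes_S N)^G$ is an isomorphism after applying $(-)^{**}$; since $R$ is a two-dimensional normal domain, it suffices that $\theta$ be an isomorphism over the punctured spectrum $U = \mathrm{Spec} R \setminus \{\mathfrak{m}\}$. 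Here I pass to $S$: as $M \otimes_S N$ is $S$-free, hence $S*G$-projective, Lemma \ref{M=SxMGlemmadim2} (part 1) applied to $M$, $N$ and $M \otimes_S N$ shows that $S \otimes_R M^G \to M$, $S \otimes_R N^G \to N$ and $S \otimes_R (M \otimes_S N)^G \to M \otimes_S N$ are isomorphisms on the punctured spectrum of $S$, whence $S \otimes_R (M^G \otimes_R N^G) \cong M \otimes_S N \cong S \otimes_R (M \otimes_S N)^G$ there. Since $\mathrm{Spec} S \to \mathrm{Spec} R$ is finite and restricts to a free, hence faithfully flat, quotient over the punctured spectra, faithfully flat descent lets me check $\theta$ after the base change $- \otimes_R S$, where it becomes the identity of $M \otimes_S N$; thus $\theta$ is an isomorphism on $U$, and $\theta^{**}$ is an isomorphism. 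The delicate point to get right is this descent and reflexivity bookkeeping, namely verifying that a codimension-one isomorphism of reflexive modules on the normal surface $\mathrm{Spec} R$ induces a genuine isomorphism of reflexive hulls, and that the isomorphisms of Lemma \ref{M=SxMGlemmadim2} are compatible with the tensor product.
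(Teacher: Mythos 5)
Your proof is correct, but note that the paper does not actually prove Theorem \ref{Auslandertheorem}: it is stated as a summary of known results and implicitly referred to Auslander and to \cite[Chapter 10]{Yos90} via Theorem \ref{RandSGmodules}. So your argument is a genuine (and welcome) filling-in rather than a reproduction. Your treatment of parts 1), 2), 3), 5), 6) via the factorization $\mathcal{A}=\mathcal{G}\circ\mathcal{F}$, Maschke, and full faithfulness of $\mathcal{G}$ is the standard route and is sound. For the only nontrivial item, part 4), your strategy is precisely the one the paper itself deploys for the symmetric-power analogue in Proposition \ref{symcommuteswithauslanderprop2}: reduce to an isomorphism of sheaves on the punctured spectrum $U$ via Lemma \ref{reflexivehullpuncturedlemma}, pull back along the finite \'etale (since $G$ is small) quotient $U'\rightarrow U$, and apply Lemma \ref{M=SxMGlemmadim2} to $M$, $N$ and $M\otimes_S N$; the paper phrases the descent as ``a sheaf homomorphism which is locally an isomorphism'' rather than as faithfully flat descent, but the content is identical. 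Two small points you rightly flag and should spell out if you write this up: first, $M^G\otimes_R N^G$ is locally free on $U$ because $R$ is an isolated singularity and $M^G$, $N^G$ are MCM, so its torsion is supported at $\mathfrak{m}$ and Lemma \ref{reflexivehullpuncturedlemma} does compute $(M^G\otimes_R N^G)^{**}$ as $\Gamma(U,\cdot)$; second, the identifications of Lemma \ref{M=SxMGlemmadim2} are all induced by the single multiplication map $s\otimes m\mapsto sm$, which makes the compatibility square with $\theta$ commute by inspection. With those remarks made explicit, your proof is complete.
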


\begin{Cor}\label{corollarydecomposition}
Let $V_1,\dots,V_r$ be a complete set of non-isomorphic irreducible $\kk$-representations of $G$, and fix $N_i=\mathcal{A}(V_i)=(S\otimes_{\kk}V_i)^G$. 
Then $N_1,\dots,N_r$ is a complete set of non-isomorphic indecomposable MCM $R$-modules. 
Moreover let $V$ be a $\kk$-representation which decomposes as
 \begin{equation*}
  V=V_1^{n_1}\oplus\cdots\oplus V_r^{n_r},
 \end{equation*}
for some natural numbers $n_i$. 
Then the MCM $R$-module $N:=\mathcal{A}(V)=(S\otimes_{\kk}V)^G$ decomposes as
 \begin{equation*}
  N=N_1^{n_1}\oplus\cdots\oplus N_r^{n_r}.
 \end{equation*}
\end{Cor}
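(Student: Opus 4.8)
The plan is to deduce both assertions formally from the properties of the Auslander functor $\mathcal{A}=\mathcal{G}\circ\mathcal{F}$ recorded in Theorem \ref{Auslandertheorem}, together with the semisimplicity of $\kk[G]$. Since $(\chara\kk,|G|)=1$, Maschke's theorem shows $\kk[G]$ is semisimple, so every finitely generated $\kk[G]$-module is a direct sum of irreducibles, and $V_1,\dots,V_r$ is a complete list of the indecomposable (equivalently irreducible) $\kk[G]$-modules up to isomorphism.

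I would first dispatch the decomposition statement by observing that $\mathcal{A}$ is \emph{additive}, i.e.\ commutes with finite direct sums. This is immediate on each factor: $\mathcal{F}(-)=S\otimes_{\kk}(-)$ commutes with direct sums because the tensor product does, and $\mathcal{G}(-)=(-)^G$ commutes with direct sums because an invariant of $M_1\oplus M_2$ is a pair of invariants. Applied to $V=V_1^{n_1}\oplus\cdots\oplus V_r^{n_r}$ this gives
\begin{equation*}
 N=\mathcal{A}(V)\cong\mathcal{A}(V_1)^{n_1}\oplus\cdots\oplus\mathcal{A}(V_r)^{n_r}=N_1^{n_1}\oplus\cdots\oplus N_r^{n_r},
\end{equation*}
which is the second assertion.

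For the first assertion, each $N_i=\mathcal{A}(V_i)$ is indecomposable in $\Ref{R}$ by part 2) of Theorem \ref{Auslandertheorem} (as $V_i$ is irreducible), and $\Ref{R}=\cmod{R}$ since $R$ has dimension $2$; hence each $N_i$ is an indecomposable MCM module, and the $N_i$ are pairwise non-isomorphic by part 1) of the same theorem. The remaining point is completeness, namely that every indecomposable MCM $R$-module occurs among the $N_i$, and this is the step I expect to be the crux. I would reduce it to essential surjectivity of $\mathcal{A}$: through the equivalence $\mathcal{G}\colon\proj{S*G}\cong\Ref{R}$ of Theorem \ref{RandSGmodules}, the claim becomes that every projective $S*G$-module is of the form $\mathcal{F}(V)$. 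For this I would identify the free module $S*G$ with $\mathcal{F}(\kk[G])$ for the regular representation $\kk[G]$ (matching the diagonal $G$-action on $S\otimes_\kk\kk[G]$ with the left action on $\bigoplus_g Sg$), so that Maschke gives $S*G\cong\bigoplus_i\mathcal{F}(V_i)^{\dim_{\kk}V_i}$. Since every projective $S*G$-module is a direct summand of a free one and $\Ref{R}$ has the Krull-Remak-Schmidt property, each indecomposable projective $S*G$-module must be one of the $\mathcal{F}(V_i)$; transporting back through $\mathcal{G}$, every indecomposable MCM module is some $N_i$.

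Thus the main obstacle is precisely this completeness/essential surjectivity: Theorem \ref{Auslandertheorem} supplies injectivity on isomorphism classes and preservation of indecomposability for free, but exhausting \emph{all} indecomposable MCM modules requires the identification $S*G\cong\mathcal{F}(\kk[G])$ and an appeal to KRS. Once that is in place, both statements of the corollary follow formally.
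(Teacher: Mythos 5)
Your proof is correct and follows the route the paper intends: the corollary is stated without a separate proof, as a formal consequence of Theorem \ref{Auslandertheorem} and the equivalence $\proj{S*G}\cong\Ref{R}$, and your treatment of the only nontrivial point (completeness) via the identification $S*G\cong\mathcal{F}(\kk[G])$, Maschke's theorem, and the Krull--Remak--Schmidt property is exactly the standard argument (cf.\ Yoshino, Chapter 10) that the paper implicitly relies on. Nothing is missing.
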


\begin{Ex}\label{examplecyclicgroup3}
 Let $C_n=<g>$ be the cyclic group of order $n$, with irreducible representations $(V_t,\rho_t)$ over an algebraically closed field $\kk$ given by $\rho_t(g)=\xi^t$, for a fixed primitive $n$-th root of unity $\xi\in\kk$. 
 Assume that $\chara\kk$ does not divide $|G|$. 
 We can embed $G$ into $\Gl{2,\kk}$ via the representation $V_1\oplus V_a$, where $a$ is a natural number such that $(a,n)=1$, otherwise the representation is not faithful. 
 In other words, we consider the cyclic group generated by
 \begin{equation*}
  \begin{pmatrix}
   \xi&0\\0&\xi^a
  \end{pmatrix}.
 \end{equation*}
This group acts linearly on $S=\kk\llbracket u,v\rrbracket$ and the invariant subring $R$ is generated by monomials $u^iv^j$ such that $i+aj\cong0\mod n$.
\par For each irreducible representation $V_t$, we have an indecomposable MCM $R$-module $M_t=(S\otimes_{\kk}V_t)^G$. A straightforward computation shows that this is given by
\begin{equation*}
 M_t=R\left(u^iv^j: \ i+aj\cong-t\mod n\right).
\end{equation*}
\end{Ex}

\subsection{Auslander functor and reflexive symmetric powers}
 \par We want to prove that the Auslander functor commutes with $q$-th reflexive symmetric powers $\Sym^q(-)^{**}$, that is
 \begin{equation*}
  \mathcal{A}(\Sym_{\kk}^q(V))\cong(\Sym_R^q(\mathcal{A}(V)))^{**}
 \end{equation*}
for every $\kk[G]$-module $V$. 
Since the Auslander functor is the composition of the functors $\mathcal{F}$ and $\mathcal{G}$ we will split the proof of this fact in two propositions.

 \begin{Rem}
 Observe that in the categories $\mmod{\kk[G]}$ and $\proj{S*G}$ the reflexive symmetric powers coincide with the usual symmetric powers. 
 In fact, every finitely generated $\kk[G]$-module $V$ is reflexive, so it is canonically isomorphic to its double dual $V^{**}$. 
 Since $S$ is regular, for every finitely generated projective $S*G$-module $N$ the symmetric powers $\Sym^q_S(N)$ are $S$-free, hence reflexive. 
 \end{Rem}

\begin{Prop}\label{symcommuteswithauslanderprop1}
For every $\kk[G]$-module $V$ we have an isomorphism of $S*G$-modules
\begin{equation*}
 \Sym_S^q(S\otimes_{\kk}V)\cong S\otimes_{\kk}\Sym^q_{\kk}(V).
\end{equation*}
\end{Prop}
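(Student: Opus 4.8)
The plan is to deduce the statement from the general fact that the symmetric algebra functor commutes with base change, and then to verify that the resulting isomorphism respects the diagonal $G$-actions, so that it becomes an isomorphism of $S*G$-modules.

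First I would set up the two $S*G$-module structures. On $S\otimes_{\kk}V$ the group acts diagonally, $g(s\otimes v)=g(s)\otimes g(v)$, and this action is semilinear over $S$, meaning $g(s\cdot m)=g(s)\,g(m)$; hence $S\otimes_{\kk}V$ is an $S*G$-module. The $S$-module $\Sym_S^q(S\otimes_{\kk}V)$ then carries the induced diagonal action $g(m_1\cdots m_q)=g(m_1)\cdots g(m_q)$, and writing $s\cdot(m_1\cdots m_q)=(sm_1)m_2\cdots m_q$ one checks $g(s\cdot m_1\cdots m_q)=g(s)\,g(m_1\cdots m_q)$, so this too is an $S*G$-module. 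On the right, $G$ acts on $\Sym_{\kk}^q(V)$ through its action on $V$ and diagonally on $S\otimes_{\kk}\Sym_{\kk}^q(V)$, again giving an $S*G$-module.

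Next I would produce the underlying $S$-linear isomorphism. For any ring homomorphism $\kk\to S$ and any $\kk$-module $V$ there is a natural isomorphism of graded $S$-algebras
\begin{equation*}
\Sym_S(S\otimes_{\kk}V)\cong S\otimes_{\kk}\Sym_{\kk}(V),
\end{equation*}
which is immediate from the universal property, since graded $S$-algebra homomorphisms out of either side into a graded $S$-algebra $A$ correspond naturally to $\kk$-linear maps from $V$ into the degree-one part $A_1$. Taking degree-$q$ homogeneous components yields an $S$-linear isomorphism $\Phi\colon\Sym_S^q(S\otimes_{\kk}V)\xrightarrow{\ \sim\ }S\otimes_{\kk}\Sym_{\kk}^q(V)$ which on decomposable elements reads
\begin{equation*}
\Phi\bigl((s_1\otimes v_1)\cdots(s_q\otimes v_q)\bigr)=(s_1\cdots s_q)\otimes(v_1\cdots v_q).
\end{equation*}

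Finally I would check that $\Phi$ is $G$-equivariant, which upgrades it to an isomorphism of $S*G$-modules. Since decomposable elements generate, it suffices to compare the two sides on one of them. Applying $g$ and then $\Phi$ gives $\Phi\bigl((g(s_1)\otimes g(v_1))\cdots(g(s_q)\otimes g(v_q))\bigr)=(g(s_1)\cdots g(s_q))\otimes(g(v_1)\cdots g(v_q))$, while applying $\Phi$ and then $g$ gives $g\bigl((s_1\cdots s_q)\otimes(v_1\cdots v_q)\bigr)=g(s_1\cdots s_q)\otimes g(v_1\cdots v_q)$; these coincide because $g$ is a ring automorphism of $S$, so $g(s_1\cdots s_q)=g(s_1)\cdots g(s_q)$. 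I expect the only delicate point to be the bookkeeping of the two $S*G$-structures — in particular confirming the semilinearity compatibility so that the diagonal actions are genuine module structures and that $\Phi$ is well defined on the symmetric (not merely tensor) power — after which the equivariance is exactly the short computation above.
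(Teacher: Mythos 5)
Your proposal is correct and follows essentially the same route as the paper: both produce the standard base-change isomorphism $\Sym_S^q(S\otimes_{\kk}V)\cong S\otimes_{\kk}\Sym_{\kk}^q(V)$ with the explicit formula $(s_1\otimes v_1)\cdots(s_q\otimes v_q)\mapsto (s_1\cdots s_q)\otimes(v_1\cdots v_q)$ (the paper cites Northcott for it, you derive it from the universal property of the symmetric algebra), and then verify $G$-equivariance on decomposable elements using that each $g$ is a ring automorphism of $S$. Your extra care in spelling out the two $S*G$-module structures is a harmless elaboration of what the paper leaves implicit.
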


\begin{proof}
\par From \cite[(6.5.1)]{Nor84} we have an isomorphism of $S$-modules $\psi:\Sym_S^q(S\otimes_{\kk}V)\rightarrow S\otimes_{\kk}\Sym^q_{\kk}(V)$ given by $\psi((a_1\otimes v_1)\circ\dots\circ (a_q\otimes v_q))=(a_1\cdots a_q)\otimes(v_1\circ\dots\circ v_q)$.
The only thing that we have to check is that $\psi$ is compatible with the action of $G$, and this is shown by the following commutative diagram 
\begin{equation*}
 \begin{tikzcd}
  &(a_1\otimes v_1)\circ\dots\circ (a_q\otimes v_q) \arrow{r}{\psi}\arrow{d}{g}
  &(a_1\cdots a_q)\otimes(v_1\circ\dots\circ v_q) \arrow{d}{g}\\
  &\left(g(a_1)\otimes g(v_1)\right)\circ\dots\circ\left(g(a_q)\otimes g(v_q)\right)  \arrow{r}{\psi}
  &\begin{split}&g(a_1\dots a_q)\otimes g(v_1\circ\dots\circ v_q)= \\ &\left(g(a_1)\cdots g(a_q)\right)\otimes \left(g(v_1)\circ\dots\circ g(v_q)\right).\end{split}
 \end{tikzcd}
\end{equation*}
\phantom{some  text to complete some  text to complete some  text to complete some  more text }
\end{proof}

\par We will use often the following well-known lemma.
We write it here for ease of reference.

\begin{Lemma}\label{reflexivehullpuncturedlemma}
 Let $(R,\mathfrak{m},\kk)$ be a normal two-dimensional local domain, and let $U=\mathrm{Spec} \,R\setminus\{\mathfrak{m}\}$ be the punctured spectrum of $R$.
 For a torsion-free finitely generated $R$-module $M$ we have an isomorphism
 \begin{equation*}
  M^{**}\cong\Gamma(U,\widetilde{M}).
 \end{equation*}
\end{Lemma}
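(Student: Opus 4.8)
The plan is to compare both sides of the claimed isomorphism with the double dual $M^{**}$ itself, using the local cohomology sequence attached to the punctured spectrum. The starting point is the standard exact sequence relating a finitely generated module $N$ to the sections of $\widetilde N$ on $U$: since $\mathrm{Spec}\,R$ is affine, the local cohomology sequence for the closed point reads
\begin{equation*}
 0\longrightarrow H^0_{\mathfrak{m}}(N)\longrightarrow N\longrightarrow \Gamma(U,\widetilde N)\longrightarrow H^1_{\mathfrak{m}}(N)\longrightarrow 0.
\end{equation*}
Hence the canonical map $N\to\Gamma(U,\widetilde N)$ is an isomorphism precisely when $H^0_{\mathfrak{m}}(N)=H^1_{\mathfrak{m}}(N)=0$, i.e. whenever $\depth N\geq 2$. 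I would apply this with $N=M^{**}$.

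Next I would verify that $M^{**}$ has depth at least $2$. As $R$ is a two-dimensional normal local domain it satisfies $(S_2)$, so $\depth R=2$. Writing $M^{**}=(M^*)^*$ and dualizing a free presentation $R^a\to R^b\to M^*\to 0$, the module $M^{**}$ sits in a short exact sequence $0\to M^{**}\to R^b\to C\to 0$, where $C$ is the image in $R^a$, hence a torsion-free submodule of a free module. By the depth lemma $\depth M^{**}\geq\min(\depth R,\depth C+1)\geq\min(2,1+1)=2$, using that a nonzero torsion-free module over a domain has depth at least $1$. Therefore $H^0_{\mathfrak{m}}(M^{**})=H^1_{\mathfrak{m}}(M^{**})=0$, and the sequence above gives $M^{**}\cong\Gamma(U,\widetilde{M^{**}})$.

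It then remains to identify $\Gamma(U,\widetilde{M^{**}})$ with $\Gamma(U,\widetilde M)$, for which I would show that the canonical morphism $\widetilde M\to\widetilde{M^{**}}$ is an isomorphism of sheaves on $U$. Every point of $U$ corresponds to a prime $\mathfrak{p}$ of height at most $1$; by normality (condition $(R_1)$) the localization $R_{\mathfrak{p}}$ is either the fraction field or a discrete valuation ring, over which the torsion-free module $M_{\mathfrak{p}}$ is free and hence reflexive. Thus the localized map $M_{\mathfrak{p}}\to (M_{\mathfrak{p}})^{**}=(M^{**})_{\mathfrak{p}}$ is an isomorphism at every point of $U$, so $\widetilde M|_U\cong\widetilde{M^{**}}|_U$ and in particular $\Gamma(U,\widetilde M)\cong\Gamma(U,\widetilde{M^{**}})$. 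Concatenating the two isomorphisms yields $M^{**}\cong\Gamma(U,\widetilde M)$.

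The step I expect to be the crux is controlling the depth of $M^{**}$; everything else is formal once the local cohomology sequence and the stalkwise computation on $U$ are in place. The point to be careful about is that the depth lemma must be applied to $M^{**}$ as the dual of $M^*$ (rather than to $M$ directly, since $M$ itself need not satisfy $(S_2)$), and that it is the torsion-freeness of $C$ that supplies $\depth C\geq 1$; this is exactly where the hypotheses that $R$ is a domain of dimension two and normal enter the argument.
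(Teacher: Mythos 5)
Your proof is correct. The paper itself gives no argument for this lemma (it is stated as "well-known" purely for ease of reference), and your route --- the local cohomology sequence $0\to H^0_{\mathfrak{m}}(N)\to N\to\Gamma(U,\widetilde N)\to H^1_{\mathfrak{m}}(N)\to 0$ applied to $N=M^{**}$, the depth lemma giving $\depth M^{**}\geq 2$ from the presentation $0\to M^{**}\to R^b\to C\to 0$ with $C$ torsion-free, and the identification $\widetilde M|_U\cong\widetilde{M^{**}}|_U$ via $(R_1)$ in codimension $\leq 1$ --- is exactly the standard argument the authors are implicitly invoking, with the hypotheses (normal, two-dimensional, domain, torsion-free) entering precisely where you say they do.
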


\begin{Rem}\label{identificationremark}
Let $S=\kk\llbracket u,v\rrbracket$ and $R=S^G$ be as in the rest of the section.
Consider the following commutative diagram 
\begin{equation*}
 \begin{CD}
  U'@>>>\mathrm{Spec} \,S \\
  @V\pi VV @V\pi VV \\
  U@>>>\mathrm{Spec} \,R
 \end{CD}
\end{equation*}
where the map $\pi$ is induced by the inclusion $R\hookrightarrow S$, $U$ is the punctured spectrum of $R$, and $U'=\pi^{-1}(U)$ is the pull-back of $U$ to $\mathrm{Spec} \,S$, which is actually the punctured spectrum of $S$. 
For every $R$-module $M$ this gives us the following identification of sheaves on $U'$
\begin{equation}\label{identification}
 \pi^{*}(\widetilde{M}|_{U})\cong\left.\reallywidetilde{S\otimes_RM}\right|_{U'}.
\end{equation}
\end{Rem}

\begin{Prop}\label{symcommuteswithauslanderprop2}
 Let $N$ be a projective $S*G$-module, then we have an isomorphism of $R$-modules
 \begin{equation*}
(\Sym_S^q(N))^G\cong\left(\Sym_R^q(N^G)\right)^{**}.  
 \end{equation*}
\end{Prop}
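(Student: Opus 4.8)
The plan is to exploit the equivalence of categories $\mathcal{G}\colon\proj{S*G}\to\Ref{R}$ and $\mathcal{G}'\colon\Ref{R}\to\proj{S*G}$ of Theorem \ref{RandSGmodules}, and to reduce every reflexive hull to a computation over the punctured spectrum. Write $M:=N^G\in\Ref{R}$, so that the claim reads $\mathcal{G}(\Sym_S^q N)\cong(\Sym_R^q M)^{**}$. Since $\mathcal{G}$ and $\mathcal{G}'$ are mutually quasi-inverse, it is equivalent to prove
\[
\mathcal{G}'\big((\Sym_R^q M)^{**}\big)\cong\Sym_S^q N
\]
as projective $S*G$-modules and then apply $\mathcal{G}$, using $\mathcal{G}\circ\mathcal{G}'\cong\mathrm{id}_{\Ref{R}}$. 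Note first that $N$, being projective over $S*G$, is $S$-free, so $\Sym_S^q N$ is again $S$-free; in particular it is a projective $S*G$-module and is reflexive over $S$.

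Next I would unwind $\mathcal{G}'=(S\otimes_R-)^{**}$, where $(-)^*=\Hom_S(-,S)$. The inner reflexive hull is harmless: as $R$ is an isolated singularity, the reflexive module $M$, and hence $\Sym_R^q M$, is locally free on the punctured spectrum $U=\mathrm{Spec}\,R\setminus\{\mathfrak{m}\}$, so $\Sym_R^q M\to(\Sym_R^q M)^{**}$ is an isomorphism over $U$. Transporting this to the punctured spectrum $U'$ of $S$ through the identification of Remark \ref{identificationremark}, the sheaves associated to $S\otimes_R\Sym_R^q M$ and to $S\otimes_R(\Sym_R^q M)^{**}$ agree on $U'$, hence so do their $S$-biduals by Lemma \ref{reflexivehullpuncturedlemma} applied to $S$. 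Combining this with the base-change identity $S\otimes_R\Sym_R^q M\cong\Sym_S^q(S\otimes_R M)$ gives
\[
\mathcal{G}'\big((\Sym_R^q M)^{**}\big)\cong\big(\Sym_S^q(S\otimes_R M)\big)^{**}.
\]

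It remains to identify the right-hand side with $\Sym_S^q N$. By Lemma \ref{M=SxMGlemmadim2} the canonical map $\delta\colon S\otimes_R M=S\otimes_R N^G\to N$ is an isomorphism of coherent sheaves over $U'$; since symmetric powers commute with restriction to $U'$, the sheaves associated to $\Sym_S^q(S\otimes_R M)$ and to $\Sym_S^q N$ coincide on $U'$. Applying Lemma \ref{reflexivehullpuncturedlemma} to the regular two-dimensional domain $S$ to compute $S$-biduals as global sections over $U'$, and recalling that $\Sym_S^q N$ is already $S$-free, hence reflexive, I conclude $\big(\Sym_S^q(S\otimes_R M)\big)^{**}\cong(\Sym_S^q N)^{**}=\Sym_S^q N$. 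Chaining the displayed isomorphisms yields $\mathcal{G}'\big((\Sym_R^q M)^{**}\big)\cong\Sym_S^q N$, and applying $\mathcal{G}$ gives the assertion.

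The delicate point is the repeated passage between a module and its associated sheaf on a punctured spectrum: I must make sure that each reflexive hull, over $R$ and over $S$, is legitimately replaced by the corresponding sheaf on $U$ or $U'$, and that the identification of pulled-back sheaves is the $G$-equivariant one of Remark \ref{identificationremark}, so that the isomorphisms above are genuinely of $S*G$-modules (and hence, after $\mathcal{G}$, of $R$-modules). The only geometric input is that $\delta$ is an isomorphism exactly over $U'$, which ultimately rests on the smallness of $G$ making its action on the punctured spectrum fixpoint-free; the rest is formal bookkeeping with symmetric powers, base change, and biduals.
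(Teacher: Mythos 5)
Your proposal is correct and follows essentially the same route as the paper: reduce via the equivalence $\mathcal{G},\mathcal{G}'$ of Theorem \ref{RandSGmodules} to a statement about projective $S*G$-modules, then identify everything over the punctured spectra $U$ and $U'$ using Lemma \ref{reflexivehullpuncturedlemma}, the identification of Remark \ref{identificationremark}, the compatibility of symmetric powers with sheafification and pullback, and Lemma \ref{M=SxMGlemmadim2}, finishing with the $G$-equivariance check. The only cosmetic difference is that the paper applies $\mathcal{G}'$ to both sides and shows each is isomorphic to $\Sym_S^q(N)$, whereas you compute $\mathcal{G}'\bigl((\Sym_R^q M)^{**}\bigr)$ directly and then apply $\mathcal{G}$; the content is the same.
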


\begin{proof}
Let $\mathcal{G}'(-)=(S\otimes_R-)^{**}$ be the right adjoint of the $G$-invariants functor $\mathcal{G}(-)=(-)^G$. 
Since $\mathcal{G}$ and $\mathcal{G}'$ are an equivalence of categories (Theorem \ref{RandSGmodules}), it is enough to show that
\begin{equation*}
\mathcal{G}'\left((\Sym_S^q(N))^G\right)\cong\mathcal{G}'\left(\left(\Sym_R^q(N^G)\right)^{**}\right),  
 \end{equation*}
that is 
\begin{equation}\label{aussymequat1}
\left(S\otimes_R\left(\Sym^q_S(N)\right)^G\right)^{**}\cong\left(S\otimes_R\left(\Sym_R^q(N^G)\right)^{**}\right)^{**},
 \end{equation}
 as $S*G$-modules.
\par Notice that the two double duals on the right hand side of \eqref{aussymequat1} are different: the first one is the double dual in $\mmod{R}$ and the second is the double dual in $\mmod{S}$. 
\par In order to prove \eqref{aussymequat1}, we consider the left hand side and the right hand side separately. 
From the second part of Lemma \ref{M=SxMGlemmadim2}, the left hand side of \eqref{aussymequat1} is $\left(S\otimes_R\left(\Sym^q_S(N)\right)^G\right)^{**}\cong\Sym^q_S(N)$.
\par For the $S$-module on the right hand side of \eqref{aussymequat1} we use  Lemma \ref{reflexivehullpuncturedlemma}, and we interpret it as the evaluation of the sheaf 
\begin{equation*}
\left.\reallywidetilde{S\otimes_R\left(\Sym_R^q(N^G)\right)^{**}}\right|_{U'}
\end{equation*}
on the punctured spectrum $U'$ of $S$.
 From the commutative diagram of Remark \ref{identificationremark} we get the isomorphism
\begin{equation*}
 \left.\reallywidetilde{S\otimes_R\left(\Sym^q_R(N^G)\right)^{**}}\right|_{U'}\cong\pi^{*}\left(\left.\reallywidetilde{\Sym_R^q(N^G)^{**}}\right|_{U}\right)=\pi^{*}\left(\left.\reallywidetilde{\Sym_R^q(N^G)}\right|_{U}\right),
\end{equation*}
where we can remove the double dual over $U$, thanks to Lemma \ref{reflexivehullpuncturedlemma}.
 Since taking symmetric powers commute with sheafification and with the restriction map of sheaves we get 
\begin{equation*}
 \pi^{*}\left(\left.\reallywidetilde{\Sym_R^q(N^G)}\right|_{U}\right)\cong \pi^{*}\left(\left.\left(\Sym^q_{X}(\widetilde{N^G})\right)\right|_{U}\right)\cong \pi^{*}\left(\Sym^q_{X}\big(\widetilde{N^G}|_{U}\big)\right),
\end{equation*}
where the second and the third symmetric powers are sheaf symmetric powers taken over $X=\mathrm{Spec} \,R$. 
We set $Y=\mathrm{Spec} \,S$. Since sheaf symmetric powers commute with pullback we have
\begin{equation*}
\pi^{*}\left(\Sym^q_{X}\big(\widetilde{N^G}|_{U}\big)\right)\cong\Sym_{Y}^q\left(\pi^{*}\big(\widetilde{N^G}|_{U}\big)\right).
\end{equation*}
We apply again \eqref{identification} and Lemma \ref{reflexivehullpuncturedlemma} to obtain 
\begin{equation*}
\Sym_{Y}^q\left(\pi^{*}\big(\widetilde{N^G}|_{U}\big)\right)\cong\Sym_{Y}^q\left(\left.\reallywidetilde{S\otimes_RN^G}\right|_{U'}\right)=\Sym_{Y}^q\left(\left.\reallywidetilde{(S\otimes_RN^G)^{**}}\right|_{U'}\right).
\end{equation*}
Since taking symmetric powers commutes with sheafification and with the restriction map of sheaves we get 
\begin{equation*}
 \Sym_{Y}^q\left(\left.\reallywidetilde{(S\otimes_RN^G)^{**}}\right|_{U'}\right)\cong \left.\reallywidetilde{\Sym_S^q\left(\left(S\otimes_RN^G\right)^{**}\right)}\right|_{U'}\cong\left.\reallywidetilde{\Sym_S^q\left(N\right)}\right|_{U'},
\end{equation*}
where the last isomorphism follows from Lemma \ref{M=SxMGlemmadim2}. 
Taking global sections on $U'$ we obtain that the right hand side of \eqref{aussymequat1} is also isomorphic to  $\Sym^q_S(N)$.
 Therefore we have an isomorphism of $S$-modules as in \eqref{aussymequat1}. 
 This is actually an isomorphism of $S*G$-modules.
In fact, the natural map of $R$-modules $\Sym^q_R(N^G)\rightarrow\left(\Sym_S^q(N)\right)^G$ induces a natural map  $S\otimes_R\Sym^q_R(N^G)\rightarrow S\otimes_R\left(\Sym_S^q(N)\right)^G$, which is $G$-compatible since the action is just on $S$.
 \end{proof}

From Proposition \ref{symcommuteswithauslanderprop1} and Proposition \ref{symcommuteswithauslanderprop2} we immediately obtain the following.

 \begin{Theorem}\label{symcommuteswithauslandertheorem}
  Let $V$ be a $\kk[G]$-module, and let $M=(S\otimes_{\kk}V)^G$ be the corresponding MCM $R$-module via Auslander functor. Then we have
  \begin{equation*}
   \Sym_R^q(M)^{**}\cong \left(S\otimes_{\kk}\Sym_{\kk}^q(V)\right)^G.
 \end{equation*}
 In other words $\Sym_R^{q}(\mathcal{A}(V))^{**}\cong\mathcal{A}(\Sym^{q}_{\kk}(V))$.
 \end{Theorem}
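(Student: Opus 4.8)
The plan is to obtain the theorem by simply chaining together the two preceding propositions, exploiting the fact that the Auslander functor factors as $\mathcal{A}=\mathcal{G}\circ\mathcal{F}$ with $\mathcal{F}(V)=S\otimes_{\kk}V$ and $\mathcal{G}(N)=N^G$. First I would set $N:=\mathcal{F}(V)=S\otimes_{\kk}V$. Since $S$ is local and $N$ is $S$-free, $N$ is a projective $S*G$-module, so it is a legitimate input for Proposition \ref{symcommuteswithauslanderprop2}; and by definition $N^G=(S\otimes_{\kk}V)^G=M=\mathcal{A}(V)$.

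Next, I would apply Proposition \ref{symcommuteswithauslanderprop2} to this $N$. Because $N^G=M$, it yields an isomorphism of $R$-modules
\begin{equation*}
\big(\Sym_S^q(N)\big)^G\cong\big(\Sym_R^q(M)\big)^{**},
\end{equation*}
which produces exactly the reflexive symmetric power occurring on the left-hand side of the statement. Then I would invoke Proposition \ref{symcommuteswithauslanderprop1}, which gives $\Sym_S^q(N)=\Sym_S^q(S\otimes_{\kk}V)\cong S\otimes_{\kk}\Sym_{\kk}^q(V)$. The crucial point is that this is an isomorphism in $\proj{S*G}$, not merely in $\mmod{S}$, hence it is compatible with the $G$-action and I may apply the invariants functor $\mathcal{G}=(-)^G$ to both sides, obtaining
\begin{equation*}
\big(\Sym_S^q(N)\big)^G\cong\big(S\otimes_{\kk}\Sym_{\kk}^q(V)\big)^G=\mathcal{A}\big(\Sym_{\kk}^q(V)\big).
\end{equation*}
Comparing the two displayed isomorphisms gives $\big(\Sym_R^q(M)\big)^{**}\cong\mathcal{A}\big(\Sym_{\kk}^q(V)\big)$, which is the assertion.

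Since the theorem is a formal composition of the two propositions, there is no genuine obstacle at this stage. The only point that must be verified is that the $G$-equivariance in Proposition \ref{symcommuteswithauslanderprop1} permits the invariants functor to be applied compatibly on both sides; this is precisely why that proposition was phrased in $\proj{S*G}$ rather than in $\mmod{S}$. All the substantive work, namely the symmetric-power identity for a tensor product over $\kk$ and the reflexive-hull computation over the punctured spectrum, has already been carried out inside the two propositions.
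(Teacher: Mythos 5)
Your proof is correct and is exactly the paper's argument: the authors state that the theorem follows immediately from Propositions \ref{symcommuteswithauslanderprop1} and \ref{symcommuteswithauslanderprop2}, and your write-up simply makes that composition explicit, including the (correct) observations that $S\otimes_{\kk}V$ is projective over $S*G$ and that the isomorphism in Proposition \ref{symcommuteswithauslanderprop1} is $G$-equivariant so that invariants can be taken.
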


\section{Symmetric signature of Kleinian singularities}\label{sectionKleinian}
\par In this section we will compute the generalized symmetric signature (and differential symmetric signature) of two-dimensional Kleinian singularities.
These are invariant rings $R:=\kk\llbracket u,v\rrbracket^G$, where $\kk$ is an algebraically closed field, and $G$ is one of the following finite small subgroups of $\Sl{2,\kk}$: the cyclic group $C_n$ of order $n$, the binary dihedral group $BD_n$ of order $4n$, the binary tetrahedral group $BT$ of order $24$, the binary octahedral group $BO$ of order $48$, and the binary icosahedral group $BI$ of order $120$.
We always assume that the characteristic of $\kk$ does not divide the order of the group $G$.
It is an old result of Klein that if $\kk$ has characteristic zero, then every finite subgroup of $\Sl{2,\kk}$ is isomorphic to one of these groups.
For this reason these groups are also called \emph{Klein groups}.
\par The Kleinian singularities are hypersurface rings, that is they are isomorphic to a quotient ring $\kk\llbracket x,y,z\rrbracket/(f)$ for some polynomial $f$, according to the following table.
\begin{center}\label{table}
\begin{tabular}{c|c|c|c}
singularity name & $G$ & $|G|$ & $f$ \\
\midrule
$A_{n-1}$ & cyclic & $n$ & $y^{n}-xz$\\ 
$D_{n+2}$ & binary dihedral  & $4n$ & $x^2+y^{n+1}+yz^2$ \\
$E_6$ & binary tetrahedral & $24$ & $x^2+y^3+z^4$\\
$E_7$ & binary octahedral & $48$ & $x^2+y^3+yz^3$ \\
$E_8$ & binary icosahedral & $120$ & $x^2+y^3+z^5$
\end{tabular}
\end{center}

\subsection{The fundamental module of Kleinian singularities}
\par Let $(R,\mathfrak{m},\kk)$ be a complete two-dimensional normal non-regular domain with canonical module $K_R$.
In this setting Auslander \cite{Aus86b} proved  that in the category $\Ref{R}$ there exists a unique non-split short exact sequence of the form
\begin{equation*}
 0\rightarrow K_R\rightarrow E\rightarrow \mathfrak{m}\rightarrow 0,
\end{equation*}
which is called the \emph{fundamental sequence} of $R$. 

\par The module $E$ appearing in the middle term is also unique up to isomorphism and is called the \emph{fundamental module} or \emph{Auslander module} of $R$ (cf. \cite[Chapter 11]{Yos90}). 
It is a reflexive (hence MCM) module of rank $2$. 
Moreover we have an isomorphism of $R$-modules $\left(\bigwedge^2 E\right)^{**}\cong K_R$.
\par The following example (cf. \cite[Example 11.8]{Yos90}) clarifies the name fundamental module.

\begin{Ex}\label{fundamentalmodulerepresentation}
Let $V$ be a $\kk$-vector space of dimension $2$ with basis $u$, $v$ and let $G\subseteq\Gl{2,\kk}$ be a finite subgroup. 
Then, $G$ acts on the power series ring $S=\kk\llbracket u,v\rrbracket$ and we consider the invariant ring $R=S^G$, which is a two-dimensional quotient singularity.
The fundamental module of $R$ is the image via Auslander functor of the fundamental representation $V$ of $G$, that is
\begin{equation*}
 E=(S\otimes_{\kk}V)^G.
\end{equation*}
\end{Ex}

\begin{Theorem}[Yoshino-Kawamoto, \cite{YK88}]\label{yoshinokawamoto1}
Let $R=\kk\llbracket x,y,z\rrbracket/(f)$ be a complete two-dimensional normal non-regular domain with canonical module, with  $f\in\kk\llbracket x,y,z\rrbracket$.
Then the fundamental module $E$ is isomorphic to the third syzygy of $\kk$, i.e. 
\begin{equation*}
E\cong\mathrm{Syz}^3_R(\kk).
\end{equation*}
Moreover, the following facts are equivalent.
\begin{compactenum}
\item The fundamental module $E$ is decomposable.
\item $R$ is a cyclic quotient singularity.
\end{compactenum}
\end{Theorem}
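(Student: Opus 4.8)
The plan is to treat the two assertions separately, using the Auslander correspondence (Theorem \ref{Auslandertheorem}) and Example \ref{fundamentalmodulerepresentation} as the main bridge between the module-theoretic and representation-theoretic pictures. For the isomorphism $E\cong\Syz^3_R(\kk)$, I would first exploit the hypersurface structure. Since $R=\kk\llbracket x,y,z\rrbracket/(f)$ is a hypersurface, it is Gorenstein, so the canonical module is $K_R\cong R$ and the minimal free resolution of $\kk$ is eventually $2$-periodic, governed by a matrix factorization of $f$. The fundamental sequence
\begin{equation*}
0\rightarrow K_R\rightarrow E\rightarrow\mathfrak{m}\rightarrow 0
\end{equation*}
becomes $0\rightarrow R\rightarrow E\rightarrow\mathfrak{m}\rightarrow 0$, exhibiting $E$ as an extension of $\mathfrak{m}=\Syz^1_R(\kk)$ by $R$. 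The strategy is to compare this with the tail of the minimal free resolution of $\kk$: splicing the sequence $0\to\Syz^3\to F_2\to\Syz^2\to 0$ together with the identification of $\Syz^2_R(\kk)$ in the hypersurface case. Concretely, I would show that the unique non-split extension class in $\mathrm{Ext}^1_R(\mathfrak{m},R)$ defining $E$ coincides with the class that realizes $\Syz^3_R(\kk)$, both being controlled by the periodicity of the resolution; uniqueness of the fundamental module then forces $E\cong\Syz^3_R(\kk)$.

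For the equivalence between $(1)$ and $(2)$, I would pass through the Auslander functor. By Example \ref{fundamentalmodulerepresentation}, we have $E\cong\mathcal{A}(V)$ where $V$ is the given (two-dimensional) representation of $G\subseteq\Sl{2,\kk}$ realizing the action on $S=\kk\llbracket u,v\rrbracket$. By part $2)$ of Theorem \ref{Auslandertheorem}, $\mathcal{A}(V)$ is indecomposable in $\Ref{R}$ if and only if $V$ is irreducible. Hence $E$ is decomposable precisely when the defining two-dimensional representation $V$ is reducible. The remaining task is purely group-theoretic: among the finite small subgroups of $\Sl{2,\kk}$, the natural two-dimensional representation is reducible if and only if the group is cyclic (since for a cyclic group the embedding diagonalizes into $V_1\oplus V_a$ as in Example \ref{examplecyclicgroup3}, whereas the binary dihedral, tetrahedral, octahedral, and icosahedral groups act irreducibly in dimension two). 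This reducibility-versus-cyclicity dichotomy is exactly the statement $(1)\Leftrightarrow(2)$.

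The main obstacle I anticipate lies in the first assertion, namely pinning down $\Syz^3_R(\kk)\cong E$ rigorously rather than merely matching ranks. One must carefully track the minimal free resolution of $\kk$ over the hypersurface, identify which syzygy the fundamental sequence splices into, and verify that the extension class is genuinely the non-split generator of $\mathrm{Ext}^1_R(\mathfrak{m},K_R)$ rather than a different element. The representation-theoretic equivalence $(1)\Leftrightarrow(2)$, by contrast, should follow cleanly once Theorem \ref{Auslandertheorem} and the classification of Klein groups are invoked, so I would spend most of the effort establishing the syzygy identification via the matrix-factorization periodicity and the uniqueness of the fundamental module.
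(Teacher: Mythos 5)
The paper itself gives no proof of this statement: it is quoted as an external result of Yoshino--Kawamoto \cite{YK88}, so there is no internal argument to compare yours against. Judged on its own terms, your proposal has a genuine gap in the equivalence $(1)\Leftrightarrow(2)$. The theorem is a \emph{characterization} among all complete two-dimensional normal non-regular hypersurface domains $R=\kk\llbracket x,y,z\rrbracket/(f)$; no quotient-singularity structure is assumed. Your argument starts by writing $E\cong\mathcal{A}(V)$ via Example \ref{fundamentalmodulerepresentation} and Theorem \ref{Auslandertheorem}, which presupposes $R=S^G$. That settles $(2)\Rightarrow(1)$ (a cyclic group over an algebraically closed field of coprime characteristic has only one-dimensional irreducible representations, so the fundamental representation is reducible and $E$ decomposes), but it cannot touch the hard direction $(1)\Rightarrow(2)$: that decomposability of $E$ \emph{forces} $R$ to be a cyclic quotient singularity. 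For that one must work directly with the ring: if $E\cong I\oplus J$ with $I,J$ rank-one reflexive modules, one has to combine the fundamental sequence with $\left(\bigwedge^2E\right)^{**}\cong K_R\cong R$ to pin down the divisor class group and the local structure of $R$ and conclude it is of type $A_{n-1}$. Nothing in your sketch does this, and the Auslander correspondence is unavailable until after that conclusion has been reached; as it stands, the ``purely group-theoretic'' step begs the question.

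For the first assertion your outline is reasonable but incomplete, as you yourself note: since $R$ is a hypersurface, $K_R\cong R$ and the fundamental sequence is the generator of $\mathrm{Ext}^1_R(\mathfrak{m},R)\cong\mathrm{Ext}^2_R(\kk,K_R)\cong\kk$, so it suffices to exhibit $\Syz^3_R(\kk)$ as a \emph{non-split} extension of $\mathfrak{m}$ by $R$. But you never construct the surjection $\Syz^3_R(\kk)\twoheadrightarrow\mathfrak{m}$ with free kernel of rank one, and the phrase ``the class that realizes $\Syz^3_R(\kk)$'' is not defined until such an extension is produced; the two-periodicity of the matrix factorization by itself only controls $\Syz^{q+2}$ versus $\Syz^{q}$, not the relation between $\Syz^3_R(\kk)$ and $\Syz^1_R(\kk)=\mathfrak{m}$. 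Non-splitness would then follow from minimality of the resolution, but the construction of the extension is the substantive step and is missing.
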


\par Observe that the Kleinian singularities satisfy the condition of the previous Theorem \ref{yoshinokawamoto1}.
Now we prove that also the second syzygy of the residue field is isomorphic to the fundamental module.

\begin{Theorem}\label{Theoremsecondsyzygyfield1}
Let $(R,\mathfrak{m},\kk)$ be a two-dimensional Kleinian singularity. 
Then the second syzygy of the residue field $\kk$ is isomorphic to the fundamental module, that is
\begin{equation*}
\Syz_R^2(\kk)\cong E.
\end{equation*}
\end{Theorem}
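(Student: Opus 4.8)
The plan is to combine the Yoshino--Kawamoto identification $E\cong\Syz^3_R(\kk)$ from Theorem~\ref{yoshinokawamoto1} with the two-periodicity of syzygies over a hypersurface, and then to pin down the resulting module by means of the Auslander correspondence. First I would record the structural facts that make the reduction possible. Since $G\subseteq\Sl{2,\kk}$ the ring $R$ is Gorenstein, so its canonical module is $K_R\cong R$; as $R$ is Cohen--Macaulay of dimension $2$, the depth lemma gives that $\Syz^2_R(\kk)$ is maximal Cohen--Macaulay, hence reflexive. Because $R=\kk\llbracket x,y,z\rrbracket/(f)$ has embedding dimension $3$, the first two Betti numbers of $\kk$ are $1$ and $3$, so a rank count along the minimal free resolution shows $\rank_R\Syz^2_R(\kk)=2$, matching $\rank_R E=2$.

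Next I would exploit that $R$ is a hypersurface. Over a hypersurface every maximal Cohen--Macaulay module has a two-periodic minimal free resolution (it is given by a matrix factorization of $f$), so $\Syz^1_R\Syz^1_R(M)\cong M$ for MCM $M$. Applying this to $M=\Syz^2_R(\kk)$ yields $\Syz^4_R(\kk)\cong\Syz^2_R(\kk)$, and since $\Syz^4_R(\kk)=\Syz^1_R(\Syz^3_R(\kk))\cong\Syz^1_R(E)$ by Theorem~\ref{yoshinokawamoto1}, I obtain
\[
\Syz^2_R(\kk)\cong\Syz^1_R(E).
\]
This reduces the statement to the single assertion that the fundamental module is its own first syzygy, $\Syz^1_R(E)\cong E$ (equivalently $\Syz^2_R(\kk)\cong\Syz^3_R(\kk)$).

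To prove $\Syz^1_R(E)\cong E$ I would pass through the Auslander correspondence of Theorem~\ref{Auslandertheorem}. By Example~\ref{fundamentalmodulerepresentation}, $E=\mathcal{A}(V)$ with $V$ the two-dimensional defining representation of $G$; since $G\subseteq\Sl{2,\kk}$ one has $\bigwedge^2 V\cong\kk$, hence $V\cong V^*$, and correspondingly $E^*\cong E$ (using property 3) of Theorem~\ref{Auslandertheorem} with $W$ trivial). The key point I would establish is that the syzygy operator corresponds, under $\mathcal{A}$, to representation duality, i.e.\ $\Syz^1_R(\mathcal{A}(W))\cong\mathcal{A}(W^*)$ for every $\kk[G]$-module $W$; granting this, $\Syz^1_R(E)\cong\mathcal{A}(V^*)\cong\mathcal{A}(V)=E$ follows at once. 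For modules of rank one this identity is transparent, because both the syzygy and the dual act as negation on the divisor class group of $R$, so they agree on the (class-determined) reflexive modules of rank one; on the level of invariants I can already check that $\Syz^1_R(E)$ is reflexive of rank $2$, self-dual (from $(\Syz^1_R E)^*\cong\Syz^1_R(E^*)\cong\Syz^1_R(E)$), and of trivial determinant (from the presentation $0\to\Syz^1_R(E)\to R^{\mu(E)}\to E\to 0$ together with $\bigl(\bigwedge^2E\bigr)^{**}\cong K_R\cong R$).

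The hard part will be precisely this last identification, namely showing that $\Syz^1_R(-)$ fixes the node of the defining representation rather than moving it to another maximal Cohen--Macaulay module of rank $2$. The coarse invariants do not suffice: for $E_8$ the binary icosahedral group is perfect, so there are two self-dual indecomposable MCM modules of rank $2$, both with trivial determinant, and rank, determinant and self-duality alone cannot separate them. I therefore expect to need the finer McKay data — either the explicit action of $\Syz^1_R(-)$ on the McKay quiver, or the Auslander--Reiten fact that the translate $\tau$ is the identity on $\underline{\cmod{R}}$ for a two-dimensional Gorenstein isolated singularity, or a direct matrix-factorization computation from the equation $f$ in the table — to conclude that $\Syz^1_R(E)\cong E$. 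An alternative, more self-contained finish would be to construct a non-split short exact sequence $0\to R\to\Syz^2_R(\kk)\to\mathfrak{m}\to 0$ and invoke the uniqueness of the fundamental sequence $0\to K_R\to E\to\mathfrak{m}\to 0$; producing the surjection $\Syz^2_R(\kk)\twoheadrightarrow\mathfrak{m}$ with kernel $R$ is then the crux.
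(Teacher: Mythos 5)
Your reduction is correct as far as it goes, but it stops short of a proof: the entire content of the theorem ends up concentrated in the unproved claim $\Syz^1_R(E)\cong E$. The two-periodicity step is fine ($R$ is a hypersurface, $\Syz^2_R(\kk)$ is MCM without free direct summands, so $\Syz^4_R(\kk)\cong\Syz^2_R(\kk)\cong\Syz^1_R(E)$ via Theorem~\ref{yoshinokawamoto1}), and you correctly diagnose that rank, determinant and self-duality cannot separate $E$ from the other rank-two candidates (e.g.\ for $E_8$). But the tools you then gesture at are themselves theorems in need of proof. In particular, the proposed identity $\Syz^1_R(\mathcal{A}(W))\cong\mathcal{A}(W^*)$ is delicate even to state correctly: for $W=V_0$ the trivial representation one has $\mathcal{A}(V_0)=R$ and $\Syz^1_R(R)=0\neq R=\mathcal{A}(V_0^*)$, so at best it can hold up to free summands (i.e.\ in the stable category), and proving it for the fundamental representation is essentially equivalent to the theorem you are trying to prove. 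Likewise, ``$\tau=\mathrm{id}$ on the stable category'' controls $\Syz^2_R$ (which you already know) rather than $\Syz^1_R$, so it does not break the deadlock. The alternative finish you mention --- producing a non-split sequence $0\to R\to\Syz^2_R(\kk)\to\mathfrak{m}\to 0$ and invoking uniqueness of the fundamental sequence --- would indeed suffice, but you do not construct the surjection $\Syz^2_R(\kk)\twoheadrightarrow\mathfrak{m}$, and that construction is the whole point.

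For comparison, the paper avoids the syzygy calculus on $R$ entirely and works upstairs in $S=\kk\llbracket u,v\rrbracket$: choosing generators $p_1,p_2,p_3$ of $\mathfrak{m}_R$ viewed in $S$, exactness of $(-)^G$ gives $\Syz^2_R(\kk)\cong\bigl(\Syz^1_S(p_1,p_2,p_3)\bigr)^G$, where $\Syz^1_S(p_1,p_2,p_3)\cong S^2$ is a \emph{free} $S$-module with a $G$-action; one then reads off the two-dimensional representation carried by its generators and checks, case by case (or uniformly via the socle/local-duality argument of Remark~\ref{remarksecondsyzygyfield2}), that it is the fundamental representation $V_1$, so that the module is $\mathcal{A}(V_1)=E$ by Example~\ref{fundamentalmodulerepresentation}. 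That explicit identification of the $S*G$-structure is exactly the input your outline is missing; without it, or without an actual construction of the fundamental sequence, the proposal is a reduction to an equivalent open step rather than a proof.
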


\par From Example \ref{fundamentalmodulerepresentation}, it is clear that Theorem \ref{Theoremsecondsyzygyfield1} is equivalent to the following statement.

\begin{Theorem}\label{Theoremsecondsyzygyfield2}
Let $(R,\mathfrak{m}_R,\kk)$ be a two-dimensional Kleinian singularity, and let $V_1$ be the two-dimensional fundamental representation of the acting group $G$ of $R$.
Then, the second syzygy of the residue field $\kk$ is isomorphic to the image of $V_1$ via Auslander functor, that is
\begin{equation*}
\Syz_R^2(\kk)\cong\mathcal{A}(V_1) 
\end{equation*}
\end{Theorem}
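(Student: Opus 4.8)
The statement to prove is Theorem~\ref{Theoremsecondsyzygyfield2}: for a two-dimensional Kleinian singularity $R=\kk\llbracket u,v\rrbracket^G$ with $G\subseteq\Sl{2,\kk}$, the second syzygy $\Syz_R^2(\kk)$ of the residue field is isomorphic to $\mathcal{A}(V_1)$, the image under the Auslander functor of the two-dimensional fundamental representation $V_1$. By Example~\ref{fundamentalmodulerepresentation} and Theorem~\ref{Theoremsecondsyzygyfield1} this is the same as showing $\Syz_R^2(\kk)\cong E$, the fundamental module. Since Theorem~\ref{yoshinokawamoto1} (Yoshino--Kawamoto) already gives $E\cong\Syz_R^3(\kk)$, the heart of the matter is to relate the \emph{second} and \emph{third} syzygies of $\kk$. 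The plan is to exploit the fact that $R$ is a hypersurface ring, namely $R\cong\kk\llbracket x,y,z\rrbracket/(f)$ by the table, and to use the two-periodicity of minimal free resolutions over hypersurfaces.

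First I would invoke Eisenbud's theorem on matrix factorizations: over a hypersurface ring $R=P/(f)$ with $P$ regular, the minimal free resolution of any finitely generated module is eventually two-periodic, so that for the residue field $\kk$ one has $\Syz_R^{i+2}(\kk)\cong\Syz_R^{i}(\kk)$ for all $i\geq 1$ (after passing to the stable category, i.e. up to free summands). In particular $\Syz_R^3(\kk)\cong\Syz_R^1(\kk)=\mathfrak{m}$ is \emph{not} quite what I want; rather the periodicity that matters pairs up syzygies of the same parity, giving $\Syz_R^3(\kk)\cong\Syz_R^1(\kk)$ and $\Syz_R^4(\kk)\cong\Syz_R^2(\kk)$ in the stable category. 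So a direct appeal to periodicity alone conflates the wrong pair, and I must be more careful about which syzygy equals $E$. The cleaner route is: the module $E$ is MCM of rank $2$ and indecomposable (for non-cyclic $G$, by Theorem~\ref{yoshinokawamoto1}), reflexive, and hence lives in $\Ref{R}=\cmod{R}$; I would show $\Syz_R^2(\kk)$ is also MCM of rank $2$ with no free summands, and then identify the two by matching them through the matrix-factorization structure coming from the hypersurface presentation of $f$.

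Concretely, I would write down the beginning of the minimal free resolution of $\kk$ over $R$. The first map is $R\xrightarrow{(x,y,z)}\mathfrak{m}$, giving $\Syz_R^1(\kk)=\mathfrak{m}$; the second syzygy $\Syz_R^2(\kk)$ is the kernel of the map $R^3\to R$ encoding the Koszul-type relations among $x,y,z$ modulo $f$. Because $f$ is a polynomial in three variables defining an isolated hypersurface singularity, the relation module is governed by the matrix factorization $(\varphi,\psi)$ of $f$, and one computes $\rank_R\Syz_R^2(\kk)=2$ directly from the ranks in the resolution (the alternating sum of ranks $1,3,\ldots$ forces rank $2$ at the second step). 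By the depth lemma $\Syz_R^2(\kk)$ is MCM since $R$ is Cohen--Macaulay of dimension $2$, hence reflexive, hence an object of $\cmod{R}=\Ref{R}$. The decisive step is then an identification in this category: I would argue that the fundamental sequence $0\to K_R\to E\to\mathfrak{m}\to0$ is realized by the tail of the resolution of $\kk$, so that $E$ appears as the second syzygy. Since $R$ is Gorenstein (as $G\subseteq\Sl{2,\kk}$, by Watanabe's theorem quoted in Section~\ref{sectionAuslandersymmetric}), the canonical module is $K_R\cong R$, and the fundamental sequence becomes $0\to R\to E\to\mathfrak{m}\to 0$; comparing this with the second step $0\to\Syz_R^2(\kk)\to R^3\to\mathfrak{m}\to0$ of the resolution, I would show the two extensions of $\mathfrak{m}$ agree up to the free summand, and conclude $\Syz_R^2(\kk)\cong E$ by the uniqueness of the fundamental module.

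\textbf{Main obstacle.} The hard part will be making the comparison between the fundamental sequence $0\to R\to E\to\mathfrak{m}\to0$ and the resolution step $0\to\Syz_R^2(\kk)\to R^3\to\mathfrak{m}\to0$ genuinely rigorous, rather than merely matching ranks. The fundamental sequence is characterized by being the \emph{unique non-split} extension of $\mathfrak{m}$ by $K_R$ in $\Ref{R}$, i.e. by a specific generator of $\Ext^1_R(\mathfrak{m},K_R)$, whereas the resolution presents $\mathfrak{m}$ as a quotient of a free module $R^3$ of rank $3$, not $1$. I expect to need to reduce the rank-$3$ presentation to the rank-$1$ extension by splitting off a free summand (so that $R^3=E\oplus R$ or the analogous reduction holds), and to verify that the resulting extension class is exactly the non-split one generating $\Ext^1_R(\mathfrak{m},R)$. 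This pairing of the two extension classes, together with the indecomposability of $E$ (Theorem~\ref{yoshinokawamoto1}) to rule out spurious free summands on the side of $\Syz_R^2(\kk)$, is the crux; everything else is bookkeeping about ranks and the depth lemma.
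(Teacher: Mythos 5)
Your reduction to showing $\Syz_R^2(\kk)\cong E$ is fine, and your worry about Eisenbud periodicity pairing syzygies of the same parity correctly identifies where the difficulty lives; but the step you yourself flag as decisive does not go through, and no amount of bookkeeping will fix it. Comparing the fundamental sequence $0\to R\to E\to\mathfrak{m}\to0$ with the resolution step $0\to\Syz_R^2(\kk)\to R^3\to\mathfrak{m}\to0$ cannot yield ``the same extension up to free summands'': the kernels have ranks $1$ and $2$, and adding free summands to the fundamental sequence never produces a free middle term (in particular $R^3\cong E\oplus R$ is impossible, since $E$ is not free). What a Schanuel-type pullback of the two surjections onto $\mathfrak{m}$ actually gives is $0\to\Syz_R^2(\kk)\to R^4\to E\to0$, i.e.\ $\Syz_R^2(\kk)\cong\Syz_R^1(E)$ up to free summands; combined with Theorem \ref{yoshinokawamoto1} ($E\cong\Syz_R^3(\kk)$) and two-periodicity this is circular and proves nothing new. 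The deeper problem is that your argument uses only that $R$ is a two-dimensional normal Gorenstein hypersurface together with the uniqueness of the fundamental sequence, and the conclusion is false at that level of generality: for the completed cone over a plane elliptic curve all of these hypotheses hold and $E\cong\Syz_R^3(\kk)$ still holds, yet $\Syz_R^2(\kk)\not\cong E$ --- sheafifying, $\widetilde{\Syz_R^2(\kk)}$ is the stable bundle $\Syz(x,y,z)$ of rank $2$ and degree $-9$ (Proposition \ref{propSyzstablebundle}), while $\widetilde{E}\cong F_2$ has degree $0$. So some input genuinely special to the Kleinian situation is indispensable.

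The paper supplies exactly that input by working equivariantly upstairs in $S=\kk\llbracket u,v\rrbracket$: it resolves $S/\mathfrak{m}_RS$ over $S$ using generators $p_1,p_2,p_3$ of $\mathfrak{m}_R$, observes that $\Syz_S^1(p_1,p_2,p_3)$ is $S$-free of rank $2$ with an induced $G$-action, identifies the resulting two-dimensional representation with the fundamental representation $V_1$ by explicit case-by-case computation (Example \ref{examplecyclicsyzygy} for $A_{n-1}$), and then applies the exact functor $(-)^G$ to conclude $\Syz_R^2(\kk)\cong(S\otimes_\kk V_1)^G=\mathcal{A}(V_1)$. The referee's alternative (Remark \ref{remarksecondsyzygyfield2}) instead identifies the socle of $S/\mathfrak{m}_RS$ as $V_1$ using the degree relation $\deg x=\deg y+\deg z-1$ read off from the table of ADE equations. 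Either way, the proof must see the group action or the precise shape of the Kleinian equations; to repair your proposal you would need to add such an ingredient, at which point the structural framing around the fundamental sequence is doing essentially no work.
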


\par Theorem \ref{Theoremsecondsyzygyfield2} can be proved by case considerations. 
We illustrate the general strategy and we apply it to the singularity $A_{n-1}$ in Example \ref{examplecyclicsyzygy}. 
The reader interested to the computations for $D_{n+2}$, $E_6$, $E_7$ and $E_8$ may consult \cite{Cam16}.

\begin{proof}
\par Let $S=\kk\llbracket u,v\rrbracket$ be the powers series ring over an algebraically closed field $\kk$, and let $G$ be one of the Klein subgroups of $\Sl{2,\kk}$ acting on $S$ through a faithful representation $V_1$. 
Assume that $\chara\kk$ and $|G|$ are coprime. Then we have $R=S^G$, and  we denote by $\mathfrak{m}_R$ the maximal ideal of $R$. 
Let $p_1, p_2, p_3$ be a minimal system of generators of $\mathfrak{m}_R$ as ideal in $S$. 

\par We consider the following short exact sequence of $S$-modules, which is the beginning of an $S$-free resolution of $ S/\mathfrak{m}_RS$
\begin{equation}\label{syzygysequenceKlenian}
0\rightarrow \Syz^1_S(p_1,p_2,p_3)\rightarrow S^3 \xrightarrow{p_1,p_2,p_3} S\rightarrow S/\mathfrak{m}_RS\rightarrow0.
\end{equation}
The group $G$ acts linearly on $S$ through the fundamental representation $V_1$, and this action extends naturally to $S^3$, and its submodule $\Syz^1_S(p_1,p_2,p_3)$.
In other words, the sequence \eqref{syzygysequenceKlenian} is an exact sequence of $S*G$-modules.
We apply the functor $\mathcal{G}(N)=N^G$, which is exact, to the sequence \eqref{syzygysequenceKlenian} and we obtain an exact sequence
\begin{equation*}
0\rightarrow M\rightarrow R^3 \rightarrow R\rightarrow R/\mathfrak{m}_R\rightarrow0.
\end{equation*}
\par Since $p_1,p_2,p_3$ are a system of generators for the maximal ideal $\mathfrak{m}_R$ of $R$, the last non-zero module on the right is the residue field.
It follows that the module $M$ appearing on the left of the last sequence is just the second syzygy of $\kk$, that is $M=\Syz_R^2(\kk)$.
In other words we have that
\begin{equation*}
\Syz^1_S(p_1,p_2,p_3)^G=\Syz_R^2(\kk).
\end{equation*}
So, to understand which $R$-module $\Syz_R^2(\kk)$ is, we need to understand which is the action of $G$ on  $\Syz^1_S(p_1,p_2,p_3)$, that is its $S*G$-module structure.

\par We know that as $S$-module $\Syz_S^1(p_1,p_2,p_3)\cong S^2$ and it is therefore generated by two elements $s_1(u,v),s_2(u,v)\in S^3$.
We need to keep track of the action of $G$ through this isomorphism.
The action of $G$ on $\Syz^1_S(p_1,p_2,p_3)$ is inherited by the action on $S$, which is linear and given by matrices $M=M_g$ in $\Sl{2,\kk}$.
\par In order to understand how these matrices act on the generators $s_1(u,v)$ and $s_2(u,v)$ we procede as follows.
For each matrix $M$ we apply the linear transformation $(u,v)\mapsto M (u,v)^T $ to $s_1(u,v)$ and $s_2(u,v)$. 
We obtain two elements $s'_1(u,v)$ and $s'_2(u,v)$ in $S^3$ which belong to $\Syz^1_S(p_1,p_2,p_3)$. 
Therefore we can write them as linear combination of $s_1$ and $s_2$
\begin{equation*}
\begin{pmatrix} s'_1\\
s'_2
\end{pmatrix}
= N \begin{pmatrix} s_1 \\ s_2
\end{pmatrix}
\end{equation*}
for some matrix $N$. 
In this way we obtain a collection of matrices $N=N_g$ for $g\in G$, which gives us the representation corresponding to $\Syz^1_S(p_1,p_2,p_3)$.
Then one has to check that this is actually isomorphic to the fundamental representation $V_1$.
\end{proof}

\begin{Ex}\label{examplecyclicsyzygy}
  Let $\xi$ be a primitive $n$-th root of unity in $\kk$, and consider the cyclic group $C_n$ generated by
\begin{equation*}
A=\begin{pmatrix}
\xi & 0 \\ 0 & \xi^{-1}
\end{pmatrix}.
\end{equation*}
The maximal ideal $\mathfrak{m}_R$ of the invariant ring $R=\kk\llbracket u,v\rrbracket^{C_n}$ is generated by polynomials $p_1=u^n$, $p_2=v^n$, $p_3=uv$. Their syzygy module $\Syz^1_S(p_1,p_2,p_3)$ is generated by 
\begin{equation*}
s_1=\begin{pmatrix}
0\\-u\\v^{n-1}
\end{pmatrix} \ \ \text{ and } \ \ 
s_2=\begin{pmatrix}
-v \\ 0 \\ u^{n-1}
\end{pmatrix}.
\end{equation*}
We apply the linear transformation given by $A$: $u\mapsto\xi u$, $v\mapsto \xi^{-1}v$, to $s_1$ and $s_2$
\begin{equation*}\begin{split}
s_1&\mapsto s'_1=\begin{pmatrix}
0\\-\xi u\\(\xi^{-1})^{n-1} v^{n-1}
\end{pmatrix}=\xi\begin{pmatrix}
0\\-u\\v^{n-1}
\end{pmatrix}=\xi s_1;\\
s_2&\mapsto s'_2=\begin{pmatrix}
-\xi^{-1}v \\ 0 \\ \xi^{n-1}u^{n-1}
\end{pmatrix}=\xi^{-1}\begin{pmatrix}
-v \\ 0 \\ u^{n-1}
\end{pmatrix}=\xi^{-1}s_2.
\end{split}
\end{equation*}
Thus, the representation corresponding to $\Syz^1_S(p_1,p_2,p_3)$ is exactly the fundamental representation.
\end{Ex}

\par The referee suggested to us an alternative proof of Theorem \ref{Theoremsecondsyzygyfield2} which works for the singularities $D_{n+2}$, $E_6$, $E_7$ and $E_8$.
We sketch it in the following remark.

\begin{Rem}\label{remarksecondsyzygyfield2}
We use the same notation as in the proof of Theorem \ref{Theoremsecondsyzygyfield2}.
First, we observe that by Auslander correspondence the isomorphism $\Syz_R^2(\kk)\cong\mathcal{A}(V_1)$ is equivalent to say that the top of $\mathrm{Ext}^2_S(S/\mathfrak{m}_RS,S)$ is $V_1^*$.  By the equivariant local duality (cf. \cite[(5.5)]{HO10}), this is equivalent to say that the socle of $S/\mathfrak{m}_RS$ is $V_1$.
Now let $x,y,z\in S$ be a minimal system of generators of $\mathfrak{m}_R$ as in the table at page \pageref{table}, and consider the quotient $\Lambda=S/(y,z)$.
Since $y$ and $z$ are invariants, $\Lambda$ inherits a natural $G$-action.
By the relation $f$ in the table, $\Lambda$ is Artinian and hence it is a complete intersection.
Moreover, the degree relation $\deg x= \deg y+\deg z-1$ holds and implies that $x\cdot u=x\cdot v=0$, because $\deg(x\cdot u)=\deg(x\cdot u)=\deg y+ \deg z$.
So $x$ must sit in the socle of $\Lambda$, which is one-dimensional and is therefore equal to $\Lambda_{\deg x}$, the degree $\deg x$ component of $\Lambda$.
Since $x$ is invariant, the socle of $\Lambda$ is the trivial representation. 
The natural multiplication $\Lambda_i\times\Lambda_{\deg x-i}\rightarrow\Lambda_{\deg x}\cong\kk$ forces $\Hom_{\kk}(\Lambda,\kk)\cong\Lambda$ as $S*G$-modules, and moreover we have isomorphisms in each degree, i.e. $\Lambda_i\cong\Hom_{\kk}(\Lambda,\kk)_{\deg x-i}$.
We quotient by $x$, and we have that the socle of $\Lambda/x\Lambda\cong S/\mathfrak{m}_RS$, which is two-dimensional, must agree with $\Lambda_{\deg x -1}$, which is also two-dimensional.
So we obtain $\Lambda_{\deg x-1}\cong \Hom_{\kk}(\Lambda,\kk)_{1}\cong \Hom_{\kk}(V_1,\kk)\cong V_1$, since the fundamental representation of the Klein groups is self-dual, and we are done.
\end{Rem}

\par For Kleinian singularities, also the module of Zariski differentials $\Omega_{R/\kk}^{**}$ is isomorphic to the fundamental module.

\begin{Theorem}[Martsinkovsky, \cite{Mar90}]\label{Zariskiisfundamental} 
Let $(R,\mathfrak{m},\kk)$ be a two-dimensional quotient singularity over an algebraically closed field $\kk$, and assume that the characteristic of $\kk$ does not divide the order of the acting group.
Then the module of Zariski differentials $\Omega_{R/\kk}^{**}$ of $R$ over $\kk$ is isomorphic to the fundamental module $E$. 
\end{Theorem}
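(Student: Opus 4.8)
The plan is to establish the chain of isomorphisms
\begin{equation*}
\Omega_{R/\kk}^{**}\cong\left(\Omega_{S/\kk}\right)^G\cong\left(S\otimes_{\kk}V_1\right)^G=\mathcal{A}(V_1)=E,
\end{equation*}
where $V_1$ is the two-dimensional fundamental representation through which $G$ acts on $S=\kk\llbracket u,v\rrbracket$, and the final two identifications are exactly Example \ref{fundamentalmodulerepresentation}. The second isomorphism is elementary: the module $\Omega_{S/\kk}$ is $S$-free on $du,dv$, and since $g(du)=d(g(u))$ the $G$-action on the basis $du,dv$ agrees with the linear action on $u,v$; hence $\Omega_{S/\kk}\cong S\otimes_{\kk}V_1$ as $S*G$-modules, and taking $G$-invariants gives $(\Omega_{S/\kk})^G\cong\mathcal{A}(V_1)=E$. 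Thus the whole content of the theorem is concentrated in the first isomorphism $\Omega_{R/\kk}^{**}\cong(\Omega_{S/\kk})^G$, and I would prove it by a punctured-spectrum/descent argument entirely parallel to the proof of Proposition \ref{symcommuteswithauslanderprop2}.

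The geometric heart is the morphism $\pi\colon U'\to U$ between the punctured spectra of $S$ and $R$ from Remark \ref{identificationremark}. Because $G$ is small it contains no pseudo-reflections, so its action on $U'=\mathrm{Spec}\,S\setminus\{\mathfrak{m}\}$ is fixpoint-free; therefore $\pi$ is an \'etale $G$-torsor (this is the same smallness input used in Lemma \ref{M=SxMGlemmadim2}). I would then invoke the first fundamental exact sequence of K\"ahler differentials for $\kk\to R\to S$,
\begin{equation*}
S\otimes_R\Omega_{R/\kk}\xrightarrow{\ \alpha\ }\Omega_{S/\kk}\longrightarrow\Omega_{S/R}\longrightarrow 0,
\end{equation*}
and restrict it to $U'$. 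Since $\pi$ is \'etale, $\Omega_{S/R}$ vanishes on $U'$ and $\alpha$ becomes an isomorphism there, a map that is $G$-equivariant by naturality. Combining this with the sheaf identification \eqref{identification} of Remark \ref{identificationremark} yields a $G$-equivariant isomorphism
\begin{equation*}
\pi^{*}\!\left(\widetilde{\Omega_{R/\kk}}\big|_{U}\right)\cong\Omega_{U'/\kk}\cong\left.\reallywidetilde{S\otimes_{\kk}V_1}\right|_{U'}
\end{equation*}
of coherent sheaves on $U'$.

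To descend this to an identification of $R$-modules I would take $G$-invariant global sections over $U'$. On the right, $\Gamma(U',\reallywidetilde{S\otimes_{\kk}V_1})$ equals $(S\otimes_{\kk}V_1)^{**}=S\otimes_{\kk}V_1$ by Lemma \ref{reflexivehullpuncturedlemma} applied to the normal two-dimensional domain $S$ (the module being $S$-free, hence reflexive), so the invariants give $(S\otimes_{\kk}V_1)^G=E$. On the left, since $\pi$ is a $G$-torsor one has $(\pi_{*}\pi^{*}\mathcal{F})^G\cong\mathcal{F}$ for quasi-coherent $\mathcal{F}$ on $U$; taking $\mathcal{F}=\widetilde{\Omega_{R/\kk}}|_{U}$ gives $\Gamma(U',\pi^{*}\mathcal{F})^G\cong\Gamma(U,\widetilde{\Omega_{R/\kk}}|_{U})$, which is $\Omega_{R/\kk}^{**}$ by Lemma \ref{reflexivehullpuncturedlemma}. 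Matching the two sides produces the desired isomorphism $\Omega_{R/\kk}^{**}\cong E$.

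I expect the main obstacle to be the careful verification that the cotangent sequence indeed degenerates to an isomorphism on $U'$ and that all the maps involved—in particular $\alpha$ and the torsor descent $(\pi_{*}\pi^{*}(-))^G\cong\mathrm{id}$—are genuinely $G$-equivariant, so that passing to invariants is legitimate; the \'etaleness of $\pi$, which rests on the smallness of $G$, is the key geometric fact that makes this degeneration possible. Everything else is bookkeeping with reflexive hulls already packaged in Lemma \ref{reflexivehullpuncturedlemma} and the identification \eqref{identification}.
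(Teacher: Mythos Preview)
The paper does not prove this theorem at all: it is stated with attribution to Martsinkovsky \cite{Mar90} and used as a black box, so there is no ``paper's own proof'' to compare against. That said, your argument is correct and is precisely the natural proof in the framework the paper has set up. The chain $\Omega_{R/\kk}^{**}\cong(\Omega_{S/\kk})^G\cong(S\otimes_\kk V_1)^G=E$ is exactly right, and the reduction to an isomorphism on the punctured spectrum via the first fundamental sequence, \'etaleness of $\pi$ on $U'$ (from smallness of $G$ and the coprimality hypothesis), and the descent identity $(\pi_*\pi^*\mathcal{F})^G\cong\mathcal{F}$ for a $G$-torsor is sound and entirely in the spirit of Proposition~\ref{symcommuteswithauslanderprop2}.

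One small point worth making explicit: Lemma~\ref{reflexivehullpuncturedlemma} is stated only for torsion-free modules, while $\Omega_{R/\kk}$ need not be torsion-free a priori. This is harmless---any torsion is supported at $\mathfrak{m}$, so $\widetilde{\Omega_{R/\kk}}|_U$ coincides with the sheaf of its torsion-free quotient, and the reflexive hull of that quotient is $\Omega_{R/\kk}^{**}$---but you should say so rather than invoke the lemma directly. With that caveat, the proof goes through.
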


\subsection{The decomposition of $\Sym^q(V)$}

\par Let $G$ be a finite group of order $n$ and let $\kk$ be an algebraically closed field such that $\chara\kk$ does not divide $n$.  
Let $\mu_n(\kk)$ be the group of $n$-th roots of unity in $\kk$ and let $\mu_n(\mathbb{C})$ be the group of complex $n$-th roots of unity. 
Both  $\mu_n(\kk)$ and  $\mu_n(\mathbb{C})$ are cyclic groups of order $n$, so we can fix an isomorphism $ \phi: \mu_n(\kk)\rightarrow\mu_n(\mathbb{C})$, which we name a \emph{lift}. 
In the same way, we say that a complex root of unity $z\in\mu_n(\mathbb{C})$ is a lift of $a\in\mu_n(\kk)$ if $z=\phi(a)$.
\par Let $(V,\rho)$ be a $\kk$-representation of $G$ of dimension $r\geq1$ and let $g$ be an element of $G$. 
Then the matrix $\rho(g)$ is diagonalizable in $\kk$ and its eigenvalues are elements of $\mu_n(\kk)$, since the order of $g$ divides $n$. 
Let $\lambda_1,\dots,\lambda_r$ be these eigenvalues, counted with multiplicity.
 The \emph{Brauer character} or simply the \emph{character} of $(V,\rho)$ is the function $\chi:G\rightarrow\mathbb{C}$ given by
 \begin{equation*}
 \chi_V(g)=\phi(\lambda_1)+\cdots+\phi(\lambda_r).
 \end{equation*}

\par This definition depend on the choice of the isomorphism $\phi$. 
Since there are in general many choices for $\phi$, we have a certain degree of arbitrariness. 
However once chosen $\phi$, it will never be changed and sometimes we will simply say that we lift the eigenvalues to $\mathbb{C}$, meaning that the isomorphism $\phi$ is fixed. 

\par For a proof of the following classical results of character theory the reader may consult the books of Feit \cite[Chapter IV]{Fei82}, and of Fulton and Harris \cite[Chapter 2]{FH91}.

\begin{Prop}\label{propertiesofcharacterprop}
 Let $V$ and $W$ be $\kk$-representations of $G$ with characters $\chi_V$ and $\chi_W$.
 Then the following facts hold:
 \begin{enumerate}
  \item $\chi_V(e)=\dim_{\kk}V$, where $e$ is the identity of $G$;
  \item $\chi_V(g^{-1})=\overline{\chi_V(g)}$, the complex conjugate of $\chi_V(g)$, for every $g\in G$;
  \item $\chi_V(hgh^{-1})=\chi_V(g)$ for every $g,h\in G$;
  \item $\chi_{V\oplus W}(g)=\chi_V(g)+\chi_W(g)$ for every $g\in G$;
  \item $\chi_{V\otimes W}(g)=\chi_V(g)\cdot\chi_W(g)$ for every $g\in G$.
 \end{enumerate} 
\end{Prop}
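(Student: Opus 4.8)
The plan is to reduce every assertion to a statement about the eigenvalues of the matrix $\rho(g)$, and then to exploit the single structural feature of the lift $\phi$ that matters here: it is a \emph{group isomorphism} $\mu_n(\kk)\to\mu_n(\mathbb{C})$, hence respects products and inverses. Throughout I fix $g\in G$ and write $\lambda_1,\dots,\lambda_r\in\mu_n(\kk)$ for the eigenvalues of $\rho_V(g)$ with multiplicity; these exist in $\kk$ and are roots of unity by the diagonalizability recorded just before the statement.

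Properties (1) and (3) are immediate. For (1), the matrix $\rho_V(e)$ is the identity, so every $\lambda_i$ equals $1$ and $\phi(1)=1$, whence $\chi_V(e)=r=\dim_{\kk}V$. For (3), the matrix $\rho_V(hgh^{-1})=\rho_V(h)\rho_V(g)\rho_V(h)^{-1}$ is conjugate to $\rho_V(g)$ and therefore has the same eigenvalue multiset, so the lifted sum is unchanged.

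For (2) I would note that $\rho_V(g^{-1})=\rho_V(g)^{-1}$ has eigenvalues $\lambda_1^{-1},\dots,\lambda_r^{-1}$. Since $\phi$ is a homomorphism, $\phi(\lambda_i^{-1})=\phi(\lambda_i)^{-1}$, and because $\phi(\lambda_i)$ is a complex root of unity we have $\phi(\lambda_i)^{-1}=\overline{\phi(\lambda_i)}$; summing over $i$ and pulling the conjugation out of the finite sum yields $\chi_V(g^{-1})=\overline{\chi_V(g)}$. For (4), the representation $V\oplus W$ sends $g$ to the block-diagonal matrix $\rho_V(g)\oplus\rho_W(g)$, whose eigenvalue multiset is the disjoint union of those of $\rho_V(g)$ and $\rho_W(g)$, so summing the lifts splits into $\chi_V(g)+\chi_W(g)$.

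The only step requiring a little more care is (5). Writing $\mu_1,\dots,\mu_s\in\mu_n(\kk)$ for the eigenvalues of $\rho_W(g)$, I would use that both $\rho_V(g)$ and $\rho_W(g)$ are diagonalizable, so that $\rho_{V\otimes W}(g)=\rho_V(g)\otimes\rho_W(g)$ is diagonalizable with eigenvalue multiset $\{\lambda_i\mu_j\}_{i,j}$. Multiplicativity of $\phi$ then gives $\phi(\lambda_i\mu_j)=\phi(\lambda_i)\phi(\mu_j)$, whence
\[
\chi_{V\otimes W}(g)=\sum_{i,j}\phi(\lambda_i)\phi(\mu_j)=\Big(\sum_i\phi(\lambda_i)\Big)\Big(\sum_j\phi(\mu_j)\Big)=\chi_V(g)\,\chi_W(g).
\]
I expect this factorization of the double sum, together with the verification that the eigenvalues of a tensor product of diagonalizable matrices are precisely the pairwise products, to be the main (though still routine) point; every other case follows formally from $\phi$ being a group isomorphism.
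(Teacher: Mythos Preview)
Your argument is correct and is exactly the standard verification via eigenvalues together with the fact that the lift $\phi:\mu_n(\kk)\to\mu_n(\mathbb{C})$ is a group isomorphism. The paper itself does not give a proof of this proposition but simply refers the reader to the books of Feit and of Fulton and Harris, so there is nothing further to compare.
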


\par We define an Hermitian inner product on the set of characters of $G$ by
\begin{equation*}
 \langle\varphi,\psi\rangle:=\frac{1}{|G|}\sum_{g\in G}\overline{\varphi(g)}\psi(g),
\end{equation*}
for every $\varphi, \psi$ characters, where $\overline{\varphi(g)}$ denotes the complex conjugation.
This bilinear form satisfies
 \begin{equation*}
  \langle \chi,\psi\rangle=\frac{1}{|G|}\sum_{g\in G}\chi(g)\overline{\psi(g)}=\langle \psi,\chi\rangle,
 \end{equation*}
for every characters $\chi$, $\psi$.

\par The characters of the irreducible representations of $G$ are orthonormal with respect to this inner product. 

\begin{Theorem}\label{orthonormalrelationstheorem}
Let $G$ be a finite group and let $\kk$ be an algebraically closed field such that $\chara \kk$ does not divide $|G|$. Then the following facts hold.
 \begin{enumerate}
  \item A representation $V$ over $\kk$ is irreducible if and only if $\langle \chi_V,\chi_V\rangle=1$.
  \item If $\chi$ and $\psi$ are the characters of two non-isomorphic irreducible $\kk$-representations then $\langle \chi,\psi\rangle=0$.
 \end{enumerate}
\end{Theorem}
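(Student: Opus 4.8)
The plan is to reduce both orthogonality statements to the semisimplicity of the group algebra $\kk[G]$ together with the single \emph{bridge identity} $\dim_{\kk}\Hom_{\kk[G]}(V,W)=\langle\chi_V,\chi_W\rangle$, valid for all $\kk$-representations $V,W$.

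First I would record the structural input that comes from $\chara\kk\nmid|G|$. The integer $|G|$ is invertible in $\kk$, so Maschke's theorem applies and $\kk[G]$ is semisimple; since $\kk$ is algebraically closed, Schur's lemma gives $\Hom_{\kk[G]}(V_i,V_j)\cong\kk$ when $V_i\cong V_j$ are irreducible and $0$ otherwise. Writing an arbitrary representation as $V\cong\bigoplus_i V_i^{m_i}$, this yields $\dim_{\kk}\Hom_{\kk[G]}(V,V)=\sum_i m_i^{2}$, which equals $1$ exactly when $V$ is irreducible, while for non-isomorphic irreducibles $\dim_{\kk}\Hom_{\kk[G]}(V_i,V_j)=0$. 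Thus, granting the bridge identity, part (1) follows by taking $W=V$ and part (2) by taking $V=V_i$, $W=V_j$.

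Next I would prove the bridge identity. The space $\Hom_{\kk}(V,W)\cong V^{*}\otimes_{\kk}W$ carries the $G$-action $(g\cdot f)(x)=g\,f(g^{-1}x)$, and $\Hom_{\kk[G]}(V,W)$ is its subspace of $G$-invariants. As $|G|$ is invertible, the averaging element $\pi=\tfrac{1}{|G|}\sum_{g\in G}g$ is an idempotent of $\kk[G]$ projecting onto these invariants, so its $\kk$-trace on $V^{*}\otimes W$ equals the rank $\dim_{\kk}\Hom_{\kk[G]}(V,W)$. Using $\mathrm{tr}_{\kk}(g\mid V^{*}\otimes W)=\mathrm{tr}_{\kk}(g\mid V^{*})\,\mathrm{tr}_{\kk}(g\mid W)$ together with properties (2), (4) and (5) of Proposition \ref{propertiesofcharacterprop}, one checks that, term by term over $g\in G$, this $\kk$-trace is the image under the lift $\phi$ of the summands appearing in $\langle\chi_V,\chi_W\rangle=\tfrac{1}{|G|}\sum_{g}\chi_V(g^{-1})\chi_W(g)$.

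The main obstacle is precisely this last comparison: the left-hand side is a genuine nonnegative integer, the rank of the idempotent $\pi$ computed from traces in $\kk$, whereas the inner product lives in $\mathbb{C}$ and is obtained by lifting eigenvalues through $\phi$; since the reduction map $\mathbb{Z}\to\kk$ is not injective, one cannot equate the two by a naive count of roots of unity. I would resolve this by lifting to characteristic zero: because $p=\chara\kk$ does not divide $|G|$, the module $V^{*}\otimes W$ lifts to a representation in characteristic zero whose ordinary character coincides, via $\phi$, with the Brauer character $\chi_{V^{*}\otimes W}$, and whose reduction modulo $p$ recovers $V^{*}\otimes W$. As the decomposition matrix is the identity in this non-modular situation, multiplicities of irreducible constituents are preserved; in particular the multiplicity of the trivial representation equals $\dim_{\kk}\Hom_{\kk[G]}(V,W)$ on the $\kk$-side and, by the classical complex orthogonality relations (see \cite[Chapter IV]{Fei82}, \cite[Chapter 2]{FH91}), equals $\langle\chi_V,\chi_W\rangle$ on the $\mathbb{C}$-side. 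This establishes the bridge identity, and with it both parts of the theorem.
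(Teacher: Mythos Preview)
The paper does not supply its own proof of this theorem: it is quoted as a classical result of character theory, with a pointer to Feit \cite[Chapter IV]{Fei82} and Fulton--Harris \cite[Chapter 2]{FH91}. So there is no in-paper argument to compare against; your task was only to produce a correct proof compatible with the paper's conventions on Brauer characters.

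Your argument is correct and is essentially the standard one. The reduction of both statements to the single identity $\dim_{\kk}\Hom_{\kk[G]}(V,W)=\langle\chi_V,\chi_W\rangle$ via Maschke and Schur is clean, and you rightly flag the only genuine subtlety: the left side is computed in $\kk$ (as the $\kk$-trace of the averaging idempotent on $V^*\otimes W$), while the right side is the complex number obtained through the lift $\phi$, so one cannot simply equate term by term. Your resolution---lifting $V$ and $W$ to characteristic zero, using that for $\chara\kk\nmid|G|$ the decomposition matrix is the identity so Brauer characters agree with ordinary characters of the lifts, and then invoking the classical complex orthogonality---is exactly the route taken in the cited references. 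One small point you leave implicit and might state explicitly: the equality $\chi_{V^*}(g)=\overline{\chi_V(g)}$, which you need to identify $\chi_{V^*\otimes W}$ with $\overline{\chi_V}\,\chi_W$, is not literally among the items of Proposition~\ref{propertiesofcharacterprop}; it follows from item~(2) together with the observation that the eigenvalues of $g$ on $V^*$ are the inverses of those on $V$.
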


\begin{Cor}\label{orthonormalrelationscorollary}
 Let $\kk$ and $G$ be as in Theorem \ref{orthonormalrelationstheorem}. 
 Let $V$ be a $\kk$-representation of $G$ and let
 \begin{equation*}
  V=V_1^{n_1}\oplus\cdots\oplus V_r^{n_r}
 \end{equation*}
be its decomposition into irreducible representations $V_i$. 
If $\chi_V$ is the character of $V$ and $\chi_{V_i}$ is the character of $V_i$ then
\begin{equation*}
 n_i=\langle \chi_V,\chi_{V_i}\rangle.
\end{equation*}
In particular, two representations are isomorphic if and only if they have the same character.
 \end{Cor}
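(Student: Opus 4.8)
The plan is to reduce everything to the orthonormality of irreducible characters established in Theorem~\ref{orthonormalrelationstheorem}, together with the additivity of characters under direct sums from Proposition~\ref{propertiesofcharacterprop}(4). First I would record that, by repeated application of part (4) of Proposition~\ref{propertiesofcharacterprop}, the character of the given representation is the corresponding linear combination of the irreducible characters, namely $\chi_V=\sum_{j=1}^r n_j\chi_{V_j}$, where each $n_j$ is a non-negative (in particular real) integer.

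Next I would simply expand the inner product. Since the multiplicities $n_j$ are real, they may be pulled out of $\langle-,-\rangle$ regardless of the complex conjugation in its definition, so
\begin{equation*}
\langle\chi_V,\chi_{V_i}\rangle=\Big\langle\sum_{j=1}^r n_j\chi_{V_j},\chi_{V_i}\Big\rangle=\sum_{j=1}^r n_j\langle\chi_{V_j},\chi_{V_i}\rangle.
\end{equation*}
By Theorem~\ref{orthonormalrelationstheorem} the irreducible characters satisfy $\langle\chi_{V_j},\chi_{V_i}\rangle=\delta_{ij}$ (part (1) giving the diagonal value $1$ and part (2) the off-diagonal value $0$), so the sum collapses to $n_i$, which is exactly the first assertion.

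For the final \emph{in particular} statement, the easy direction is clear, as isomorphic representations obviously have equal characters. For the converse I would use that, since $\chara\kk$ does not divide $|G|$, every $\kk$-representation is completely reducible, so both $V$ and any $W$ with $\chi_W=\chi_V$ decompose into the irreducibles $V_1,\dots,V_r$ with certain multiplicities $n_i$ and $m_i$ (allowing zeros). Applying the formula just proved to both representations gives $n_i=\langle\chi_V,\chi_{V_i}\rangle=\langle\chi_W,\chi_{V_i}\rangle=m_i$ for every $i$, and hence $V\cong W$ by uniqueness of the decomposition into irreducibles.

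I do not expect any serious obstacle here: the statement is a formal consequence of the orthonormality relations. The only points requiring a little care are that the inner product is only conjugate-linear in its first argument, which is harmless because the multiplicities are real, and that the converse of the last assertion tacitly relies on complete reducibility, available precisely under the standing hypothesis $\chara\kk\nmid|G|$.
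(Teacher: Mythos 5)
Your argument is correct and complete: it is the standard derivation of the multiplicity formula from the orthonormality relations, and you rightly flag the two genuinely delicate points (the inner product is only conjugate-linear in its first slot, which is harmless since the $n_j$ are real integers, and the converse of the last assertion needs complete reducibility, which holds because $\chara\kk\nmid|G|$). The paper itself gives no proof of this corollary, referring instead to the standard references of Feit and Fulton--Harris; your proof is precisely the classical argument found there, so there is nothing to compare beyond noting agreement.
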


\par Thanks to Corollary \ref{orthonormalrelationscorollary}, characters are an important tool to understand the decomposition of a representation into irreducible representations.
We will use them to obtain the decomposition of the $q$-th symmetric powers $\Sym^q_{\kk}(V)$ of a two-dimensional faithful representation $V$ of a Klein group.
We begin with an elementary statement.

\begin{Lemma}\label{sumofrootofunitybounded}
 Let $\lambda\neq1$ be a root of unity in $\mathbb{C}$, and let $f:\mathbb{N}\rightarrow\mathbb{C}$ be the function  
 \begin{equation*}
 f(N):= \sum_{q=0}^N\sum_{t=0}^q\lambda^{2t-q}.
 \end{equation*}
Then $f(N)=o(N^2)$, that is
\begin{equation*}
\lim_{N\rightarrow+\infty}\frac{|f(N)|}{N^2}=0.
	\end{equation*}
\end{Lemma}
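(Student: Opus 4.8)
The plan is to collapse the double sum to elementary geometric series and then split into two cases according to the order of $\lambda$. First I would factor $\lambda^{-q}$ out of the inner sum, writing
\begin{equation*}
\sum_{t=0}^q \lambda^{2t-q} = \lambda^{-q}\sum_{t=0}^q (\lambda^2)^t.
\end{equation*}
The behaviour of the right-hand side depends entirely on whether $\lambda^2 = 1$ or not, so I would treat these two situations separately. Throughout I would use only that $|\lambda| = 1$ (so that every power of $\lambda$ has modulus $1$) and that $\lambda \neq 1$.

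In the case $\lambda^2 \neq 1$, I would sum the geometric series to get $\sum_{t=0}^q (\lambda^2)^t = \frac{\lambda^{2(q+1)}-1}{\lambda^2-1}$, so that the inner sum equals $\frac{\lambda^{q+2}-\lambda^{-q}}{\lambda^2-1}$. Since the numerator has modulus at most $2$ and the denominator is a fixed nonzero constant, the inner sum is bounded in absolute value by $C := 2/|\lambda^2-1|$, uniformly in $q$. Summing the crude bound over $q = 0,\dots,N$ gives $|f(N)| \le C(N+1)$, which is $O(N)$ and hence $o(N^2)$. (One could even sum the two outer geometric series in $\lambda^q$ and $\lambda^{-q}$, both with ratio $\neq 1$ because $\lambda \neq 1$, to see that $f(N)$ is in fact bounded; but the weaker estimate already suffices.)

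The remaining case is $\lambda^2 = 1$ with $\lambda \neq 1$, which forces $\lambda = -1$. Here the geometric series degenerates to $\sum_{t=0}^q 1 = q+1$, and the inner sum becomes $(-1)^{-q}(q+1) = (-1)^q(q+1)$, so that $f(N) = \sum_{q=0}^N (-1)^q(q+1)$. This is the only delicate point of the argument: the naive triangle inequality $|f(N)| \le \sum_{q=0}^N (q+1)$ only yields $O(N^2)$, which is \emph{not} enough, so one genuinely has to exploit the cancellation coming from the alternating signs. Pairing consecutive terms (or a one-line induction on $N$) shows that the partial sums stay bounded by $N+1$ in absolute value, giving once more $f(N) = O(N) = o(N^2)$. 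Dividing the resulting $O(N)$ estimate by $N^2$ and letting $N \to +\infty$ then establishes the claim in both cases, the main thing to watch being precisely this degenerate value $\lambda = -1$, where term-by-term bounding must be replaced by the alternating-sum cancellation.
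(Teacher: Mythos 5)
Your proof is correct, but it follows a genuinely different route from the paper's. The paper avoids any case distinction by means of a single combinatorial identity: it observes that
\begin{equation*}
2\sum_{q=0}^N\sum_{t=0}^q\lambda^{2t-q}=\Bigl(\sum_{q=0}^{N+1}\lambda^q\Bigr)\Bigl(\sum_{q=0}^{N+1}\lambda^{-q}\Bigr)-\sum_{t=0}^{N+1}\lambda^{2t-N-1},
\end{equation*}
bounds the product of the two geometric sums by $4/|1-\lambda|^2$ (using only $\lambda\neq1$) and the correction term trivially by $N+2$, obtaining $|f(N)|=O(N)$ in one stroke for every root of unity $\lambda\neq 1$, including $\lambda=-1$. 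You instead sum the \emph{inner} geometric series first, which forces the split at $\lambda^2=1$: in the generic case each inner sum is uniformly bounded by $2/|\lambda^2-1|$ (and, as you note, $f(N)$ is even bounded), while in the degenerate case $\lambda=-1$ you correctly identify that the inner sum grows like $q+1$ and that one must exploit the alternating cancellation in $\sum_{q=0}^N(-1)^q(q+1)$ to recover an $O(N)$ bound. Both arguments are complete and give the same strength of estimate, namely $f(N)=O(N)$; yours is more elementary and makes transparent that the only delicate value is $\lambda=-1$, whereas the paper's identity handles all cases uniformly at the cost of being less obvious to verify.
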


\begin{proof}
We have
\begin{equation*}
\begin{split}
0\leq\frac{|f(N)|}{N^2}&\leq\frac{2\left|\sum_{q=0}^N\sum_{t=0}^q\lambda^{2t-q}\right|}{N^2}=\frac{\left|\left(\sum_{q=0}^{N+1}\lambda^q\right)\left(\sum_{q=0}^{N+1}\lambda^{-q}\right)-\sum_{t=0}^{N+1}\lambda^{2t-N-1}\right|}{N^2}\\
&\leq\frac{4/|1-\lambda|^2+N+2}{N^2}, 
\end{split}
\end{equation*}	
which goes to $0$ for $N\rightarrow+\infty$.
\end{proof}

\begin{Ex}
 Let $(V,\rho)$ be a two-dimensional representation of a finite group $G$ over an algebraically closed field $\kk$.
 Since $\kk$ is algebraically closed, for every element $g\in G$ the matrix $\rho(g)$ can be diagonalized 
 \begin{equation*}
  \rho(g)=\begin{pmatrix}
           \lambda & 0\\
           0 & \mu
          \end{pmatrix},
 \end{equation*}
with $\lambda,\mu\in \kk$. 
The representation $\Sym^q(V)$ evaluated at the element $g$ is given by the matrix
\begin{equation*}
 \begin{pmatrix}
  \lambda^q &  0            & \cdots  &    \cdots            & 0 \\
    0       & \lambda^{q-1}\mu & 0       &   \cdots             & 0 \\
   \vdots   &                &  \ddots       &                 & \vdots \\
            & \cdots        &  \cdots & \lambda\mu^{q-1}   & 0 \\
    0        &  \cdots      &  \cdots        &      0           & \mu^{q}
   \end{pmatrix}.
\end{equation*}
In other words, if $\lambda$ and $\mu$ are the eigenvalues of $V$ at the element $g$, then the eigenvalues of $\Sym^q(V)$ at $g$ are $\{\lambda^{t}\mu^{q-t}: \ t=0,\dots,q\}$.
\end{Ex}

\begin{Lemma}\label{characterofSymqV}
 Let $\kk$ be an algebraically closed field and let $G$ be finite group such that $\chara\kk$ and $|G|$ are coprime. 
 Let $(V,\rho)$ be a two-dimensional representation of $G$ whose image is contained in $\Sl{2,\kk}$, then the (Brauer) character of the representation $\Sym^q(V)$ is given by 
 \begin{equation*}
 \chi_{\Sym^q(V)}(g)=\sum_{t=0}^q\lambda_g^{2t-q},
 \end{equation*}
where  $\lambda_g$ is the lift to $\mathbb{C}$ of an eigenvalue of the matrix $\rho(g)$.
\end{Lemma}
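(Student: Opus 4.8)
The plan is to combine the explicit formula for the eigenvalues of $\Sym^q(V)$ with the definition of the Brauer character. The key observation is that the character is defined additively on eigenvalues, so I only need to understand how the eigenvalues of $\Sym^q(V)$ relate to those of $V$, and then crucially use the hypothesis that $\rho(g)\in\Sl{2,\kk}$ to simplify the resulting sum.

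First I would fix an element $g\in G$ and diagonalize $\rho(g)$ over $\kk$ (possible since $\kk$ is algebraically closed and $\rho(g)$ has finite order coprime to $\chara\kk$, hence is semisimple). Write the eigenvalues as $\lambda$ and $\mu$. By the Example immediately preceding the statement, the eigenvalues of $\Sym^q(V)$ at $g$ are exactly $\lambda^t\mu^{q-t}$ for $t=0,\dots,q$. Therefore, by the definition of the Brauer character as the sum of the lifts of the eigenvalues, we have
\begin{equation*}
\chi_{\Sym^q(V)}(g)=\sum_{t=0}^q \phi(\lambda^t\mu^{q-t}),
\end{equation*}
where $\phi$ is the chosen lift from $\mu_n(\kk)$ to $\mu_n(\mathbb{C})$.

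The essential step is now to invoke the hypothesis $V\subseteq\Sl{2,\kk}$, which forces $\det\rho(g)=\lambda\mu=1$, so $\mu=\lambda^{-1}$. Substituting this gives eigenvalues $\lambda^t\lambda^{-(q-t)}=\lambda^{2t-q}$, and hence $\chi_{\Sym^q(V)}(g)=\sum_{t=0}^q\phi(\lambda^{2t-q})$. Setting $\lambda_g:=\phi(\lambda)$ (the lift of an eigenvalue of $\rho(g)$) and using that $\phi$ is a group isomorphism $\mu_n(\kk)\to\mu_n(\mathbb{C})$ — so that $\phi(\lambda^{2t-q})=\phi(\lambda)^{2t-q}=\lambda_g^{2t-q}$ — yields the claimed formula.

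The only point requiring a little care, which I would flag as the main subtlety rather than a genuine obstacle, is the compatibility of the lift $\phi$ with taking powers: one must verify that $\phi(\lambda^{2t-q})=\lambda_g^{2t-q}$, i.e.\ that passing to the $(2t-q)$-th power commutes with $\phi$. This is immediate from $\phi$ being a group homomorphism between the two cyclic groups of $n$-th roots of unity, and it also makes the formula independent of which of the two eigenvalues $\lambda,\mu=\lambda^{-1}$ we designate as $\lambda_g$, since replacing $\lambda$ by $\lambda^{-1}$ merely reverses the order of summation over $t$. Everything else is a direct calculation, so I expect no real difficulty beyond assembling these pieces.
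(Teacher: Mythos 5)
Your proof is correct and follows essentially the same route as the paper: diagonalize $\rho(g)$, read off the eigenvalues of $\Sym^q(V)$ from the preceding Example, and use $\det\rho(g)=1$ to write $\mu=\lambda^{-1}$. The compatibility of the lift $\phi$ with powers that you flag is exactly what the paper uses implicitly when it passes from $\mu_g=\lambda_g^{-1}$ for the lifted eigenvalues.
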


\begin{proof}
 Let $g\in G$ and let $\lambda_g$ and $\mu_g$ be the lift to $\mathbb{C}$ of the eigenvalues of $\rho(g)$. Since $\rho(g)\in\Sl{2,\kk}$, we have $\mu_g=\lambda_g^{-1}$. 
 Then the lift of the eigenvalues of $g$ in the representation $\Sym^q(V)$ are 
 \begin{equation*}
  \{\lambda_g^t\cdot\mu_g^{q-t}: \ t=0,\dots, q\}=\{\lambda_g^{2t-q}: \ t=0,\dots,q\},
 \end{equation*}
so the formula for the character of $\Sym^q(V)$ follows immediately.
\end{proof}

\begin{Rem}
 The character of the symmetric representation $\Sym^q(V)$ can be computed also using \emph{Molien's formula} (cf. \cite[Theorem 2.2.1]{Stu08})
 \begin{equation}\label{moliensformula}
  \sum_{q=0}^{+\infty}\chi_{\Sym^q(V)}(g)t^q=\frac{1}{\det(I-t\rho(g))}.
 \end{equation}
Here $t$ is an indeterminate, and the determinant on the right hand side is of a matrix with entries conveniently lifted to the polynomial ring $\mathbb{C}[t]$. 
Expanding the rational function on the right, we obtain a formal power series which is equal to the formal power series on the left, and we can use this equality to compute the character $\chi_{\Sym^q(V)}$. 
Notice that Molien's formula holds also if $\dim_{\kk}V>2$. 
\end{Rem}

\begin{Theorem}\label{theoremkleinrepresentation}
 Let $\kk$ be an algebraically closed field and let $G$ be a Klein subgroup of $\Sl{2,\kk}$, such that $\chara\kk$ does not divide $|G|$. 
 Let $(V,\rho)$ be a two-dimensional faithful $\kk$-representation of $G$ with image contained in $\Sl{2,\kk}$ and  let $(V_i,\rho_i)$ be an irreducible $\kk$-representation of $G$. 
 We denote by $\alpha_{i,q}(V)$ the multiplicity of $V_i$ in $\Sym^q(V)$ and by $\beta_{q}(V)=\dim_{\kk}\Sym^q(V)$. 
 Then
 \begin{equation*}
  \lim_{N\rightarrow+\infty}\frac{\sum_{q=0}^N\alpha_{i,q}(V)}{\sum_{q=0}^N\beta_{q}(V)}=\frac{\dim_{\kk}V_i}{|G|}.
 \end{equation*}
\end{Theorem}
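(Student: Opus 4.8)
The plan is to reduce the statement to the elementary asymptotic estimate of Lemma~\ref{sumofrootofunitybounded} by rewriting the multiplicities through characters. First I would dispose of the denominator, which is immediate: since $\dim_{\kk}V=2$, the $q$-th symmetric power satisfies $\beta_q(V)=\dim_{\kk}\Sym^q(V)=q+1$, so that
\[
\sum_{q=0}^N\beta_q(V)=\frac{(N+1)(N+2)}{2}\sim\frac{N^2}{2}.
\]

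For the numerator I would express each multiplicity as an inner product via Corollary~\ref{orthonormalrelationscorollary}, namely $\alpha_{i,q}(V)=\langle\chi_{\Sym^q(V)},\chi_{V_i}\rangle$. Summing over $q$ and interchanging the finite sum over $g\in G$ with the sum over $q$ gives
\[
\sum_{q=0}^N\alpha_{i,q}(V)=\frac{1}{|G|}\sum_{g\in G}\overline{\left(\sum_{q=0}^N\chi_{\Sym^q(V)}(g)\right)}\chi_{V_i}(g).
\]
I would then split the outer sum into the contribution of the identity $e$ and that of the remaining group elements. At $g=e$ one has $\chi_{\Sym^q(V)}(e)=\beta_q(V)=q+1$, so this single term equals $\frac{\dim_{\kk}V_i}{|G|}\cdot\frac{(N+1)(N+2)}{2}$, which produces exactly the claimed leading behaviour $\frac{\dim_{\kk}V_i}{|G|}\cdot\frac{N^2}{2}$.

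The heart of the argument is showing that every other term is negligible, i.e.\ $o(N^2)$. By Lemma~\ref{characterofSymqV} the inner double sum is precisely $\sum_{q=0}^N\sum_{t=0}^q\lambda_g^{2t-q}$, where $\lambda_g$ is the lift to $\mathbb{C}$ of an eigenvalue of $\rho(g)$, and Lemma~\ref{sumofrootofunitybounded} yields exactly $o(N^2)$ as soon as $\lambda_g\neq1$. The one point that requires genuine care, and which I expect to be the main obstacle, is verifying $\lambda_g\neq1$ for all $g\neq e$: since $\chara\kk$ is coprime to $|G|$ each $\rho(g)$ is diagonalizable, so $\lambda_g=1$ would force the other eigenvalue $\lambda_g^{-1}=1$ as well (the eigenvalues of an $\Sl{2,\kk}$-matrix being reciprocal), whence $\rho(g)$ is the identity and, by faithfulness of $V$, $g=e$. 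Note that the central element acting as $-\mathrm{id}$ is harmless, since its eigenvalue $-1\neq1$ still falls under Lemma~\ref{sumofrootofunitybounded}.

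Finally, combining these pieces, the numerator equals $\frac{\dim_{\kk}V_i}{|G|}\cdot\frac{N^2}{2}+o(N^2)$ while the denominator equals $\frac{N^2}{2}+o(N^2)$; dividing and passing to the limit gives $\frac{\dim_{\kk}V_i}{|G|}$, as desired. The only subtlety beyond the eigenvalue check is keeping track of the complex conjugation in the inner product, but this is irrelevant for the dominant identity term, where $\chi_{\Sym^q(V)}(e)$ is real and positive.
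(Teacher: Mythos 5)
Your proposal is correct and follows essentially the same route as the paper's proof: compute the denominator as $\tfrac12(N+1)(N+2)$, express $\alpha_{i,q}(V)$ via the character inner product of Corollary \ref{orthonormalrelationscorollary} together with Lemma \ref{characterofSymqV}, isolate the identity element (whose contribution gives the leading term $\tfrac{\dim_\kk V_i}{|G|}\cdot\tfrac{N^2}{2}$), and dispose of all $g\neq e$ by Lemma \ref{sumofrootofunitybounded} after checking via faithfulness and diagonalizability that $\lambda_g\neq 1$. Your eigenvalue check is phrased slightly differently from the paper's (which argues via the order of $\lambda_g$ equalling the order of $g$), but the content is identical.
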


\begin{proof}
Since $\dim_{\kk}V=2$, the dimension of $\Sym^q(V)$ is $q+1$. So the denominator in the limit above is $\sum_{q=0}^N\beta_{q}(V)=\frac12(N+1)(N+2)$.
From Corollary \ref{orthonormalrelationscorollary} and Lemma \ref{characterofSymqV} we have
\begin{equation*}
\begin{split}
 |G|\alpha_{i,q}(V)&=\langle \chi_{\Sym^q(V)},\chi_{V_i}\rangle=\sum_{g\in G}\chi_{\Sym^q(V)}(g)\cdot\overline{\chi_{V_i}(g)}\\
&=\sum_{g\in G}\overline{\chi_{V_i}(g)}\left(\sum_{t=0}^q\lambda_g^{2t-q}\right),
 \end{split}
\end{equation*}
where $\lambda_g$ is the lift to $\mathbb{C}$ of an eigenvalue of $\rho(g)$. 
The order $m_g$ of the root of unity $\lambda_g$ coincides with the order of $\rho(g)$ in $\Sl{2,\kk}$. Since $V$ is faithful, this coincides also with the order of the group element $g$ in $G$. 
In particular, $\lambda_g=1$ if and only if $g$ is the identity $I$ of $G$.
Thus, we can write the previous sum as
\begin{equation*}
\overline{\chi_{V_i}(I)}\left(\sum_{t=0}^q1\right)+\sum_{\substack{g\in G\\ g\neq I}}\overline{\chi_{V_i}(g)}\left(\sum_{t=0}^q\lambda_g^{2t-q}\right)= \dim_{\kk}(V_i)(q+1)+\sum_{\substack{g\in G\\ g\neq I}}\overline{\chi_{V_i}(g)}\left(\sum_{t=0}^q\lambda_g^{2t-q}\right).
\end{equation*}
We sum for $q$ running from $0$ to a fixed natural number $N$ and we obtain 
\begin{equation*}
\begin{split}
|G|\sum_{q=0}^N\alpha_{i,q}(V)=\frac12\dim_{\kk}(V_i)(N+1)(N+2)+o(N^2),
\end{split}
\end{equation*}
thanks to Lemma \ref{sumofrootofunitybounded}.
Therefore, the numerator of the limit is  
\begin{equation*}
\sum_{q=0}^N\alpha_{i,q}(V)=\frac{(N+1)(N+2)\dim_{\kk}V_i}{2|G|}+o(N^2)
\end{equation*}
so the limit is equal to $\frac{\dim_{\kk}V_i}{|G|}$ as desired.
\end{proof}

\begin{Rem}
 The multiplicity $\alpha_{i,q}(V)$ can be computed also using Molien's formula \eqref{moliensformula}, as showed by Springer in \cite{Spr87}. One obtains
 \begin{equation*}
 \sum_{q=0}^{+\infty} \alpha_{i,q}(V) t^q=\frac{1}{|G|}\sum_{g\in G}\frac{\chi_{V_i}(g^{-1})}{\det(I-t\rho(g))}.
 \end{equation*}
\end{Rem}

\subsection{Symmetric signature of Kleinian singularities}

\begin{Theorem}\label{theoremssKleinian}
Let $\kk$ be an algebraically closed field and let $G$ be a Klein subgroup of $\Sl{2,\kk}$ such that $\chara\kk$ does not divide $|G|$. 
Let $R$ be the corresponding Kleinian singularity and let $M_i$ be an indecomposable MCM $R$-module. 
Then the generalized symmetric signature and the generalized differential symmetric signature of $R$ with respect to $M_i$  are
\begin{equation*}
 s_{\sigma}(R,M_i)=s_{d\sigma}(R,M_i)=\frac{\rank_RM_i}{|G|}.
\end{equation*}
\end{Theorem}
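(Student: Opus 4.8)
The plan is to reduce the entire computation to the representation-theoretic limit already established in Theorem \ref{theoremkleinrepresentation}, using the fact that everything is controlled by the Auslander correspondence. First I would recall the key structural inputs. By Theorem \ref{Theoremsecondsyzygyfield2} and Theorem \ref{Zariskiisfundamental}, both the top-dimensional syzygy module $\Syz^2_R(\kk)$ and the module of Zariski differentials $\Omega_{R/\kk}^{**}$ are isomorphic to the fundamental module $E$, and by Example \ref{fundamentalmodulerepresentation} we have $E\cong\mathcal{A}(V_1)$, where $V_1$ is the two-dimensional faithful fundamental representation of $G$. This is precisely what collapses the two invariants $s_\sigma$ and $s_{d\sigma}$ into a single computation: since $\mathcal{S}^q$ and $\mathcal{C}^q$ are built from the same module, the two generalized signatures will automatically agree.

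The central step is then to apply the commutation result Theorem \ref{symcommuteswithauslandertheorem}. Setting $V=V_1$, it gives
\begin{equation*}
\mathcal{S}^q=\left(\Sym_R^q(\mathcal{A}(V_1))\right)^{**}\cong\mathcal{A}\left(\Sym_{\kk}^q(V_1)\right),
\end{equation*}
and likewise for $\mathcal{C}^q$. Now I would invoke Corollary \ref{corollarydecomposition}: decomposing the representation $\Sym^q_\kk(V_1)=\bigoplus_i V_i^{\alpha_{i,q}(V_1)}$ into irreducibles transports under $\mathcal{A}$ into a direct sum decomposition $\mathcal{S}^q\cong\bigoplus_i M_i^{\alpha_{i,q}(V_1)}$ of $\mathcal{S}^q$ into indecomposable MCM modules, with $M_i=\mathcal{A}(V_i)$. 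Hence the multiplicity $a_q$ of the fixed indecomposable $M_i$ inside $\mathcal{S}^q$ equals exactly $\alpha_{i,q}(V_1)$, the multiplicity of the irreducible $V_i$ inside $\Sym^q_\kk(V_1)$. Moreover, by property 6) of Theorem \ref{Auslandertheorem}, $\rank_R\mathcal{S}^q=\dim_\kk\Sym^q_\kk(V_1)=\beta_q(V_1)$.

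With these two identifications in hand, the generalized symmetric signature from Definition \ref{defgeneralizedsymmetric} becomes
\begin{equation*}
s_\sigma(R,M_i)=\lim_{N\to+\infty}\frac{\sum_{q=0}^N a_q}{\sum_{q=0}^N\rank_R\mathcal{S}^q}=\lim_{N\to+\infty}\frac{\sum_{q=0}^N\alpha_{i,q}(V_1)}{\sum_{q=0}^N\beta_q(V_1)},
\end{equation*}
which is precisely the limit computed in Theorem \ref{theoremkleinrepresentation} to equal $\dim_\kk V_i/|G|$. Finally, since $\rank_R M_i=\dim_\kk V_i$ by property 6) of Theorem \ref{Auslandertheorem}, this equals $\rank_R M_i/|G|$, as claimed. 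The identical argument with $V_1$ realized via $\Omega_{R/\kk}^{**}$ yields the same value for $s_{d\sigma}(R,M_i)$.

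The argument is essentially a bookkeeping assembly of previously proved results, so I do not expect a serious obstacle; the one point requiring care is ensuring the KRS property holds so that the multiplicities $a_q$ and the decomposition of $\mathcal{S}^q$ are well-defined and unique. This is guaranteed because $R$ is complete, so $\Ref{R}=\cmod{R}$ is a Krull-Remak-Schmidt category, and the indecomposables $M_i=\mathcal{A}(V_i)$ form a complete list by Corollary \ref{corollarydecomposition}. A secondary point is that Theorem \ref{theoremkleinrepresentation} requires $V_1$ to be faithful with image in $\Sl{2,\kk}$, which holds precisely because $G$ is a Klein group acting via its fundamental representation, so all hypotheses are met.
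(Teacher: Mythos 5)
Your proposal is correct and follows essentially the same route as the paper's proof: identify $\Syz^2_R(\kk)\cong\Omega_{R/\kk}^{**}\cong E=\mathcal{A}(V_1)$, use Theorem \ref{symcommuteswithauslandertheorem} to transport the multiplicity and rank computations to $\Sym^q_{\kk}(V_1)$, and conclude via Theorem \ref{theoremkleinrepresentation}. The only cosmetic difference is that you phrase the transfer through $\mathcal{A}$ and Corollary \ref{corollarydecomposition} while the paper phrases it through the adjoint $\mathcal{A}'$; these are equivalent under the Auslander equivalence.
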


\begin{proof}
Let $E$ be the fundamental module of $R$.
By Theorem \ref{Theoremsecondsyzygyfield1}, and by Theorem \ref{Zariskiisfundamental} we have
\begin{equation*}
 E\cong\Syz^2_R(\kk)\cong\Omega_{R/\kk}^{**}.
\end{equation*}
Therefore the two symmetric signatures, differential and syzygy, coincide for Kleinian singularities.
 
\par Let $V_i$ be the irreducible $\kk$-representation of $G$ such that $M_i=\mathcal{A}(V_i)$, and let $V$ be the fundamental representation of $G$, that is the $\kk$-representation such that $\mathcal{A}(V)=E$. 
Let $\alpha_{i,q}(E)$ be the multiplicity of $M_i$ into $\Sym_R^q(E)^{**}$ and let $\beta_q(E)=\rank_R\Sym_R^q(E)^{**}$.
 From Theorem \ref{symcommuteswithauslandertheorem} we have  
\begin{equation*}
\mathcal{A}'\big(\Sym^q_R(E)^{**}\big)=\Sym^q(V).
\end{equation*}
It follows that $\beta_q(E)=\dim_{\kk}\Sym^q(V)$ and $\alpha_{i,q}(E)$ equals the multiplicity of the representation $V_i$ in $\Sym^q(V)$. 
Since $V$ is the fundamental representation of a Klein group, it is faithful and its image is contained in $\Sl{2,\kk}$, so we can apply Theorem \ref{theoremkleinrepresentation} to conclude the proof.
\end{proof}

\begin{Cor}\label{corollaryKleinian}
 Let $\kk$ be an algebraically closed field and let $G$ be a Klein subgroup of $\Sl{2,\kk}$ such that $\chara\kk$ does not divide $|G|$. 
 The symmetric signature and the differential symmetric signature of the corresponding Kleinian singularity $R$ are
 \begin{equation*}
  s_{\sigma}(R)=s_{d\sigma}(R)=\frac{1}{|G|}.
 \end{equation*}
\end{Cor}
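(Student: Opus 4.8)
The plan is to deduce the Corollary from Theorem \ref{theoremssKleinian} by specializing the distinguished indecomposable module to the free module $R$ itself, so that the generalized symmetric signature $s_{\sigma}(R,R)$ coincides with the ordinary symmetric signature $s_{\sigma}(R)$.

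First I would note that, by part 5) of Theorem \ref{Auslandertheorem}, the trivial representation $V_0$ of $G$ satisfies $\mathcal{A}(V_0)=R$; since $V_0$ is irreducible, part 2) of the same theorem shows that $R$ is an indecomposable object of $\Ref{R}$. As the quotient singularity $R$ is complete local, the category $\mmod{R}$, and hence $\Ref{R}=\cmod{R}$, has the KRS property, so the module $R$ is an admissible choice for the distinguished indecomposable $M_i$ in Definition \ref{defgeneralizedsymmetric} (finitely many indecomposables even being available by Corollary \ref{corollarydecomposition}).

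The key observation is that the free rank and the multiplicity of $R$ as a direct summand coincide. Writing $\mathcal{S}^q=R^{a_q}\oplus N_q$ with $N_q$ containing no copy of $R$, uniqueness of the KRS decomposition forces $N_q$ to have no free summand, so that $a_q=\freerank_R\mathcal{S}^q$. Comparing Definition \ref{defsymmetricsignature} with Definition \ref{defgeneralizedsymmetric}, this gives $s_{\sigma}(R)=s_{\sigma}(R,R)$, and identically $s_{d\sigma}(R)=s_{d\sigma}(R,R)$ after replacing $\mathcal{S}^q$ by $\mathcal{C}^q$.

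Finally, applying Theorem \ref{theoremssKleinian} with $M_i=R$ yields
\begin{equation*}
s_{\sigma}(R)=s_{\sigma}(R,R)=\frac{\rank_R R}{|G|}=\frac{1}{|G|},
\end{equation*}
since $\rank_R R=1$, and the same computation gives $s_{d\sigma}(R)=1/|G|$; in particular the defining limits exist. There is essentially no obstacle here, as the entire analytic content is packaged inside Theorem \ref{theoremssKleinian}; the only point demanding a moment of care is the identification of $\freerank_R\mathcal{S}^q$ with the multiplicity $a_q$ of the indecomposable summand $R$, which rests on the KRS property of $\Ref{R}$.
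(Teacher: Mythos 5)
Your proposal is correct and is exactly the specialization the paper intends: the Corollary is stated without a separate proof precisely because it is Theorem \ref{theoremssKleinian} applied to $M_i=R=\mathcal{A}(V_0)$, using $\rank_R R=1$ and the identification of the multiplicity of the summand $R$ with $\freerank_R$ (which the paper already records in Section \ref{subsectionFsignature} via the cancellation property). Nothing is missing.
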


\begin{Ex}\label{exampleAnlimitnotexists}
Consider the $A_{n-1}$-singularity $R=\kk\llbracket u,v\rrbracket^{C_n}$ over an algebraically closed field $\kk$, and let $\mathcal{S}^q=\left(\Sym_R^q(\Syz_{R}^2(\kk))\right)^{**}$ as in the definition of symmetric signature.
From the proofs of Theorem \ref{theoremkleinrepresentation} and Theorem \ref{theoremssKleinian}, we have that $\rank_R\mathcal{S}^q=q+1$, and
\begin{equation*}
 \freerank_R\mathcal{S}^q=\frac{q+1}{n}(1+(-1)^q)+O(1)=\begin{cases}
                                                \frac{2(q+1)}{n}+O(1)\ &\text{ for }q \ \text{even}\\
                                                O(1)\  &\text{ for }q \ \text{odd}.
                                               \end{cases}
\end{equation*}
This shows that the limit 
\begin{equation*}
 \lim_{q\rightarrow+\infty}\frac{\freerank_R\mathcal{S}^q}{\rank_R\mathcal{S}^q}
\end{equation*}
does not exist.
Specifically, in this setting the syzygy module splits as $\Syz_R^2(\kk)\cong M_1\oplus M_{n-1}$ (with notation as in Example \ref{examplecyclicgroup3}).
Therefore we have 
\begin{equation*}
\mathcal{S}^q\cong \bigoplus_{t=0}^q M_1^{\otimes t}\otimes M_{n-1}^{\otimes q-t}\cong\bigoplus_{t=0}^qM_1^{\otimes 2t-q}.
\end{equation*}
\end{Ex}

\section{Symmetric signature of cones over elliptic curves}\label{sectionelliptic}
\par In this section we prove that the differential symmetric signature of the coordinate ring $R$ of a plane elliptic curve over an algebraically closed field of characteristic different from $2$ and $3$ is zero.
Our intention is motivated by the following remark, which shows that the F-signature of such rings (over a field of positive characteristic) is zero.

\begin{Rem}
 We consider $R=\kk[x,y,z]/(f)$, with $\kk$ an algebraically closed field of positive characteristic, and $f$ a homogeneous non-singular polynomial of degree $3$. 
 In other words, $R$ is the coordinate ring of a plane elliptic curve over $\kk$. 
 The ring $R$ is not strongly F-regular, and in particular its F-signature  $s(R)$ is $0$ by the result of Aberbach and Leuschke (Theorem \ref{AbeLeusTheorem}).
To see this, assume for simplicity that $f$ is in Weierstrass normal form, that is $f=y^2z-x^3-axz^2-bz^3$ for some $a,b\in\kk$. 
This is always possible if the characteristic of $\kk$ is different from $2$ and $3$. 
Then the ideal $I=(x,z)$ is not tightly closed in $R$, since $y^2\not\in I$, but $y^2\in I^{*}$. 
Therefore $R$ is not strongly F-regular and $s(R)=0$. 
\end{Rem}

\par The methods we use in this situation are different from those of previous sections, and are of geometric nature. 
We will use the correspondence between graded MCM $R$-modules and vector bundles over the smooth projective curve $Y=\mathrm{Proj}\,R$ to translate the problem into geometric language.
We will not make a general assumption on the characteristic of the base field $\kk$ in this section, but we will restrict to characteristic $\neq 2,3$ only when needed.
The letter $q$ is used to denote a positive integer.

\subsection{Generalities on vector bundles}
\par We recall some preliminary facts concerning vector bundles over curves, and in particular over elliptic curves.
For further details and the proofs of these facts, we refer to the books of Hartshorne \cite{Har77}, Le Potier \cite{LeP97}, and Mukai \cite[Chapter 10]{Muk03}.

\par Let $Y$ be a smooth projective curve over an algebraically closed field $\kk$.
We denote by $\mathrm{VB}(Y)$ the category whose objects are vector bundles over $Y$, and whose maps are morphisms between them. 
The category $\mathrm{VB}(Y)$ is a Krull-Schmidt category (cf. \cite[Theorem 3]{Ati56}).
Moreover there is an equivalence of categories between the category of vector bundles over $Y$ and the category of locally free sheaves on $Y$ which respects the rank (cf. \cite[Ex. II 5.18]{Har77}).
Having this identification in mind, we will use  indifferently the words vector bundle and locally-free sheaf. 
For example, we will use the notation for the structure sheaf $\mathcal{O}_Y$  of $Y$ also to denote the trivial bundle of rank one corresponding to it.

\par Let $\mathcal{S}$ be a locally free sheaf of rank $r$ over $X$. 
The degree of $\mathcal{S}$ is defined as the degree of the corresponding determinant line bundle $\mathrm{deg}\mathcal{S}=\deg\bigwedge^r\mathcal{S}$. 
The slope of $\mathcal{S}$ is $\mu(\mathcal{S})=\deg\mathcal{S}/r$. 
The degree is additive on short exact sequences and moreover $\mu(\mathcal{S}\otimes\mathcal{T})=\mu(\mathcal{S})+\mu(\mathcal{T})$.
\par The sheaf $\mathcal{S}$ is called \emph{semistable} if for every locally free subsheaf $\mathcal{T}\subseteq\mathcal{S}$ the inequality $\mu(\mathcal{T})\leq\mu(\mathcal{S})$ holds. 
If the strict inequality $\mu(\mathcal{T})< \mu(\mathcal{S})$ holds for every proper subsheaf $\mathcal{T}\subset\mathcal{S}$, then $\mathcal{S}$ is called \emph{stable}.
Stable vector bundles are indecomposable, but the converse does not hold in general.

\par The usual operations on vector spaces like direct sum, tensor product, symmetric and wedge powers extend naturally to vector bundles and correspond to the same constructions in the category of locally free sheaves.
The \emph{dual} of a vector bundle $E$ is the bundle $E^{\vee}:=\Hom(E,\mathcal{O}_Y)$. 

\begin{Rem} 
Since every vector bundle $E$ is canonically isomorphic to its double dual $E^{\vee\vee}$, reflexive symmetric powers $\Sym^q(-)^{\vee\vee}$ coincide with ordinary symmetric powers $\Sym^q(-)$ in the category $\mathrm{VB}(Y)$. 
\end{Rem}

The following are standard properties of symmetric powers of vector bundles (see also \cite[Corollary  6.4.14]{Laz04}).

\begin{Prop}\label{propertiessymvb}
 Let $Y$ be a smooth projective curve, let $E$ be a vector bundle and $\mathcal{L}$ a line bundle over $Y$. 
 Then the following facts hold.
\begin{compactenum}[1)] 
\item   $\Sym^q(E\otimes\mathcal{L})\cong \Sym^q(E)\otimes\mathcal{L}^{\otimes q}$.
 \item $\mu(\Sym^q(E))=q\cdot\mu(E)$.
\item If the field $\kk$ has characteristic $0$ and $E$ is semistable, then $\Sym^q(E)$ is also semistable.
 \end{compactenum}
\end{Prop}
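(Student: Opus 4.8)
The plan is to prove each of the three statements of Proposition~\ref{propertiessymvb} separately, as they are essentially formal consequences of the definitions together with one classical result for part~3).

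\medskip

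\textbf{Part 1):} First I would establish the projection-type formula $\Sym^q(E\otimes\mathcal{L})\cong\Sym^q(E)\otimes\mathcal{L}^{\otimes q}$. The cleanest approach is to work locally on an open set $U\subseteq Y$ over which both $E$ and $\mathcal{L}$ trivialize, so that $\mathcal{L}|_U\cong\mathcal{O}_U$ via a chosen generator, and then to observe that the natural map arising from the universal property of the symmetric power is an isomorphism on each such $U$. Since the line bundle $\mathcal{L}$ contributes one factor of itself for each of the $q$ symmetric tensor slots, one gets precisely $\mathcal{L}^{\otimes q}$; the key point to verify is that the transition functions match, which they do because the gluing data for $\mathcal{L}^{\otimes q}$ is exactly the $q$-th power of the gluing data for $\mathcal{L}$. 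This is routine multilinear algebra globalized by gluing.

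\medskip

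\textbf{Part 2):} Next I would compute the slope $\mu(\Sym^q(E))=q\cdot\mu(E)$. The strategy is to reduce to the determinant line bundle via the standard identity $\det\big(\Sym^q(E)\big)\cong(\det E)^{\otimes\binom{q+r-1}{r}}$ for a bundle $E$ of rank $r$, where $\Sym^q(E)$ has rank $\binom{q+r-1}{r-1}$. Taking degrees (which is additive on tensor powers of line bundles) gives
\begin{equation*}
\deg\big(\Sym^q(E)\big)=\binom{q+r-1}{r}\deg(E),
\end{equation*}
and dividing by the rank $\binom{q+r-1}{r-1}$ yields, after simplifying the binomial ratio $\binom{q+r-1}{r}/\binom{q+r-1}{r-1}=q/r$, exactly $\mu(\Sym^q(E))=(q/r)\deg E=q\,\mu(E)$. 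The only genuine content here is the determinantal identity for symmetric powers, which is a standard fact from exterior/symmetric algebra that I would either cite or verify by reducing to the splitting-principle case of a direct sum of line bundles.

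\medskip

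\textbf{Part 3):} The hardest part is statement~3), that semistability of $E$ in characteristic $0$ implies semistability of $\Sym^q(E)$. Here I would not attempt an elementary argument but invoke the general theorem that in characteristic $0$ any tensor functor (in particular symmetric powers, direct sums, and tensor products) preserves semistability of vector bundles on a smooth projective curve; this is precisely the kind of result collected in the reference \cite[Corollary 6.4.14]{Laz04} already cited in the excerpt. The essential obstacle, and the reason the characteristic-$0$ hypothesis is indispensable, is that the proof ultimately rests on the fact that the tensor product of two semistable bundles is semistable, which fails in positive characteristic (Frobenius pullback can destabilize); the standard characteristic-$0$ proof either passes through the correspondence between semistable bundles of degree $0$ and representations of the fundamental group, or uses that semistability is detected after restriction to a general curve and the theory of the Harder--Narasimhan filtration together with the tensor-product theorem of Narasimhan--Seshadri. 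Given that the ambient reference is available, I would present part~3) as a direct citation rather than reproving it, noting only that $\Sym^q$ is a quotient (equivalently, in characteristic $0$, a direct summand) of $E^{\otimes q}$, so that semistability of $\Sym^q(E)$ follows once one knows semistability is preserved by tensor powers and passes to summands.
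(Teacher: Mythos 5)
Your proposal is correct; all three arguments are the standard ones, and in particular the binomial identity $\binom{q+r-1}{r}/\binom{q+r-1}{r-1}=q/r$ in part 2) and the reduction of part 3) to the tensor-product theorem in characteristic $0$ check out. The paper itself gives no proof of this proposition, merely labelling the facts as standard and citing \cite[Corollary 6.4.14]{Laz04}, so your write-up simply supplies the details the authors chose to omit.
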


\par Let $R$ be a normal standard-graded domain of dimension $2$ over an algebraically closed field $\kk$, that is $R_0=\kk$ and $R$ is generated by finitely many elements of degree $1$.  
The normal assumption on $R$  implies that $R_{\mathfrak{p}}$ is a regular ring for every prime ideal $\mathfrak{p}\neq R_{+}$. 
It follows that the projective variety $Y=\mathrm{Proj}\,R$ is a smooth projective curve over $\kk$. 
For every graded module $M$ we denote by $\widetilde{M}$ the corresponding coherent sheaf on $Y$. 
The sheaves $\widetilde{R(n)}$ are invertible (cf. \cite[Proposition II.5.12]{Har77}) and denoted by $\mathcal{O}_Y(n)$.   
Moreover, every MCM graded $R$-module $M$ is locally free on the punctured spectrum, so the associated coherent sheaf $\widetilde{M}$ is in fact a vector bundle over $Y$. 
If we denote  by $\cmodgr{R}$ the category of finitely generated graded MCM $R$-modules, then we have a functor
\begin{equation*}
\widetilde{ }:\cmodgr{R}\rightarrow\mathrm{VB}(Y),
\end{equation*}
whose properties we collect in the following proposition.

\begin{Prop}\label{propertiessheafification}
Let $R$ be a normal standard graded domain of dimension $2$ over a field $\kk$, let $Y=\mathrm{Proj}\,R$, and let $M$ and $N$ be finitely generated graded MCM $R$-modules. 
Then the following facts hold.
\begin{compactenum}[1)]
\item $\widetilde{M\oplus N}\cong \widetilde{M}\oplus\widetilde{N}$;
\item $\reallywidetilde{(M\otimes_R N)^{**}}\cong \widetilde{M}\otimes_{\mathcal{O}_Y}\widetilde{N}$;
\item $\reallywidetilde{\Sym_R^q(M)^{**}}\cong\Sym_Y^q(\widetilde{M})$.
\end{compactenum} 
\end{Prop}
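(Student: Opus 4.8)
The plan is to deduce all three isomorphisms from one geometric mechanism: the functor $\widetilde{(-)}\colon\cmodgr{R}\rightarrow\mathrm{VB}(Y)$ factors through the punctured spectrum $U=\mathrm{Spec}\,R\setminus\{R_+\}$, and this $U$ is regular because $R$ is normal of dimension $2$, so that its singular locus has codimension $\geq 2$ and is contained in $\{R_+\}$. Statement 1) is immediate and I would dispose of it first: $\widetilde{(-)}$ is assembled from the homogeneous localizations $M_{(f)}$, which commute with finite direct sums, hence the functor is additive and $\widetilde{M\oplus N}\cong\widetilde{M}\oplus\widetilde{N}$.

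For 2) and 3) I would split each identity into two steps. The first step is that \emph{ordinary} sheafification commutes with tensor products and symmetric powers, that is $\widetilde{M\otimes_R N}\cong\widetilde{M}\otimes_{\mathcal{O}_Y}\widetilde{N}$ and $\widetilde{\Sym_R^q M}\cong\Sym_Y^q(\widetilde{M})$. Both are formal consequences of the compatibility of $\widetilde{(-)}$ with the flat localizations $R\to R_{(f)}$ together with the universal properties of $\otimes$ and $\Sym$, exactly as in \cite[II.5]{Har77}; no hypothesis on $\chara\,\kk$ is needed here.

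The second step is to show that the reflexive hull is invisible to $\widetilde{(-)}$, i.e. that for $P=M\otimes_R N$ (respectively $P=\Sym_R^q M$) the canonical map $P\to P^{**}$ becomes an isomorphism after applying $\widetilde{(-)}$. To establish this I would record a small lemma, the graded analogue of Lemma \ref{reflexivehullpuncturedlemma}: if $\varphi\colon P\to Q$ is a map of finitely generated graded modules whose kernel and cokernel are supported only at $R_+$, then $\widetilde{\varphi}$ is an isomorphism on $Y$, since $\widetilde{(-)}$ annihilates finitely generated $R_+$-power-torsion modules. Applying this to $\varphi\colon P\to P^{**}$ and combining with the first step yields $\widetilde{(M\otimes_R N)^{**}}\cong\widetilde{M\otimes_R N}\cong\widetilde{M}\otimes_{\mathcal{O}_Y}\widetilde{N}$ and likewise $\widetilde{\Sym_R^q(M)^{**}}\cong\widetilde{\Sym_R^q M}\cong\Sym_Y^q(\widetilde{M})$.

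The main obstacle is precisely the verification that $P\to P^{**}$ has kernel and cokernel supported at $R_+$. Here regularity of $U$ is essential: because $M$ and $N$ are MCM, hence reflexive, hence locally free on the regular one-dimensional scheme $U$, their tensor product and symmetric powers are again locally free on $U$; over such a vector bundle the double dual is canonically the bundle itself, and since the reflexive hull commutes with the localizations defining $U$, the torsion of $P$ and the cokernel of $P\to P^{**}$ vanish on $U$, i.e. live only over the irrelevant ideal. Once this local freeness on $U$ is pinned down carefully, the remaining arguments are routine functoriality.
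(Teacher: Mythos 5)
The paper states Proposition \ref{propertiessheafification} without proof, treating it as a collection of standard facts, so there is no argument of the authors' to compare yours against; your write-up is a correct and complete justification, and it is the expected one. The decomposition into ``sheafification commutes with $\otimes$ and $\Sym$'' plus ``sheafification kills the difference between $P$ and $P^{**}$'' is exactly right, and the key point you isolate --- that the kernel and cokernel of $P\to P^{**}$ are supported on the non-locally-free locus of $P$, hence in $V(R_+)$, hence are annihilated by $\reallywidetilde{(-)}$ --- is the substance of parts 2) and 3). Two small remarks. First, $U=\mathrm{Spec}\,R\setminus V(R_+)$ is the punctured cone and is \emph{two}-dimensional, not one-dimensional; what you actually use is that $M_{\mathfrak p}$ is free over the regular local ring $R_{\mathfrak p}$ for $\mathfrak p\neq R_+$ (MCM, hence reflexive, modules over a regular local ring of dimension $\leq 2$ are free), or, equivalently and more economically, that $\widetilde M$ is a torsion-free coherent sheaf on the smooth one-dimensional curve $Y$ and is therefore already a vector bundle --- this second formulation lets you bypass the punctured spectrum entirely and only needs $Y$ normal of dimension one. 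Second, your first step (compatibility of $\reallywidetilde{(-)}$ with $\otimes$ and $\Sym$) silently uses that $R$ is generated in degree $1$, so that $Y$ is covered by $D_+(f)$ with $\deg f=1$ and $M_f\cong M_{(f)}\otimes_{R_{(f)}}R_f$; this is where the ``standard graded'' hypothesis enters and is worth saying explicitly.
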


\par Now we give an appropriate definition of free rank for the category of vector bundles over $Y$. 
 
\begin{Def}\label{freerankVB}
 Let $E$ be a vector bundle over $Y$ with a fixed very ample invertible sheaf $\mathcal{O}_Y(1)$. 
 We define the \emph{free rank} of $E$ as
  \begin{equation*}
  \begin{split}
 \freerank_{\mathcal{O}_Y(1)}(E):=\max\big\{n: \ \exists &\text{ a split surjection }\varphi: E\twoheadrightarrow F, \ \text{ with } F=\bigoplus_{i=1}^n\mathcal{O}_Y(-d_i) \\
 &\text{ a splitting vector bundle of rank } n\big\}. 
 \end{split}
 \end{equation*}
\end{Def}

\par Since the category $\mathrm{VB}(Y)$ has the KRS property, to compute the free rank of a bundle $E$, one should count how many copies of twisted structure sheaves $\mathcal{O}_Y(-d_i)$ appear in the decomposition of $E$ into indecomposable bundles.
For this reason, the free rank of vector bundles $\freerank_{\mathcal{O}_Y(1)}$ agrees with the graded free rank $\freerank_R^{\mathrm{gr}}$ and not with the free rank, as explained in the following corollary.

\begin{Cor}\label{rankfreerankcorollary}
 Let $R$ be a normal standard graded domain of dimension $2$ over a field $\kk$, let $Y=\mathrm{Proj}\,R$, and let $M$  be a finitely generated graded MCM $R$-module. 
 Then the following facts hold.
 \begin{compactenum}
  \item $\rank_RM=\rank_{\mathcal{O}_Y}\widetilde{M}$.
  \item $\freerank_R^{\textrm{gr}}M=\freerank_{\mathcal{O}_Y(1)}\widetilde{M}$.
 \end{compactenum}
\end{Cor}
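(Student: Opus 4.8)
The plan is to prove the two statements of Corollary \ref{rankfreerankcorollary} by transporting decompositions back and forth through the sheafification functor, using the fact established in Proposition \ref{propertiessheafification} that this functor is additive and that it preserves the relevant algebraic operations. The key structural input is that both $\cmodgr{R}$ and $\mathrm{VB}(Y)$ are Krull--Remak--Schmidt categories, so each module and each bundle has an essentially unique decomposition into indecomposables, and to compare the two invariants it suffices to track where the distinguished summands (free summands $R(a)$ on one side, twisted structure sheaves $\mathcal{O}_Y(-d)$ on the other) go under the functor.

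For part \textit{1)}, I would argue as follows. Since $R$ is a two-dimensional normal graded domain and $M$ is MCM, $\widetilde{M}$ is a vector bundle on the smooth curve $Y=\proj{R}$. The rank of $M$ as an $R$-module equals the dimension of $M\otimes_R \field{R}$ over the fraction field, which is the dimension of the fibre of $\widetilde{M}$ at the generic point of $Y$; this is precisely $\rank_{\mathcal{O}_Y}\widetilde{M}$. Equivalently, writing $M$ up to finite length as built from copies of $R$, both ranks count the number of copies of the structure sheaf in a generic fibre, so they agree. This part is essentially a restatement of the compatibility of sheafification with rank already recorded in the discussion preceding Proposition \ref{propertiessheafification}.

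For part \textit{2)}, the idea is to decompose $M$ in $\cmodgr{R}$ as $M \cong \bigoplus_i R(a_i) \oplus N$, where $N$ has no graded free summand, so that $\freerank_R^{\mathrm{gr}}M$ counts the copies of shifted free modules $R(a_i)$. Applying the functor $\widetilde{\phantom{M}}$ and using additivity (part \textit{1)} of Proposition \ref{propertiessheafification}) gives $\widetilde{M}\cong \bigoplus_i \mathcal{O}_Y(a_i) \oplus \widetilde{N}$, since $\widetilde{R(a)}\cong\mathcal{O}_Y(a)$. To conclude that this reads off $\freerank_{\mathcal{O}_Y(1)}\widetilde{M}$, I must verify two matching facts: first, that $\widetilde{N}$ contains no twisted structure sheaf $\mathcal{O}_Y(-d)$ as a direct summand; and second, that the correspondence between indecomposable graded MCM modules and indecomposable bundles sends free summands to line bundles of the form $\mathcal{O}_Y(a)$ and nothing else does. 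Both follow from the functor being an equivalence between the additive closures of these indecomposables on an elliptic curve (where $\mathcal{O}_Y$ and its twists are indecomposable line bundles), together with the KRS uniqueness of decompositions on each side.

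The main obstacle I anticipate is the second part: one must rule out the possibility that a nonfree indecomposable graded module $N$ acquires a twisted line bundle $\mathcal{O}_Y(-d)$ as a direct summand after sheafification, and conversely that no indecomposable bundle other than a twist of $\mathcal{O}_Y$ comes from a free module. This is really a statement that the functor $\widetilde{\phantom{M}}$ reflects the distinguished summands, i.e.\ it is fully faithful up to the ambiguity of modules differing by finite length. The cleanest way to close this gap is to invoke that taking global sections of all twists recovers the module up to its graded structure for MCM modules on a two-dimensional normal cone, so that a splitting $\widetilde{M}\twoheadrightarrow \mathcal{O}_Y(-d)$ lifts to a graded split surjection $M\twoheadrightarrow R(-d)$, matching the two notions of free rank exactly.
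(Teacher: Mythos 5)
Your argument is correct and is essentially the route the paper intends: the corollary is stated there without a written proof, resting on the additivity of sheafification (Proposition \ref{propertiessheafification}), the KRS property on both sides, and the fact that a graded MCM module over a two-dimensional normal standard graded domain is recovered from its sheaf as $\bigoplus_{n}\Gamma(Y,\widetilde{M}(n))$, so that the functor is fully faithful in degree $0$ and reflects the distinguished summands $R(a)\leftrightarrow\mathcal{O}_Y(a)$ --- precisely the gap you identify and close at the end. The only blemish is your appeal to ``an elliptic curve'' in the middle of part \textit{2)}: the statement concerns an arbitrary normal standard graded two-dimensional domain, but since your final $\Gamma_{*}$-lifting argument works in that generality (line bundles are indecomposable on any smooth curve), nothing is actually lost.
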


\begin{Rem}\label{remarksyzygybundle}
Let $I$ be an $R_+$-primary homogeneous ideal with homogeneous generators $f_1,\dots,f_n$ of degrees $d_i$. 
The following presentation of $R/I$
\begin{equation*}
\bigoplus_{i=1}^nR(-d_i)\xrightarrow{f_1,\dots,f_n}R\rightarrow R/I\rightarrow0 
\end{equation*}
induces a short exact sequence of sheaves on $Y$
\begin{equation*}
0\rightarrow\mathcal{S}\rightarrow\bigoplus_{i=1}^n\mathcal{O}_Y(-d_i)\rightarrow\mathcal{O}_Y\rightarrow0,
\end{equation*}
where  $R/I$ corresponds to $0$ because $I$ is $R_+$-primary.
Since $\mathcal{O}_Y$ and $\bigoplus_{i=1}^n\mathcal{O}_Y(-d_i)$ are locally free,  the sheaf $\mathcal{S}$ is also locally free.
The sheaf $\mathcal{S}$ is the kernel of the sheaf morphism $f_1,\dots, f_n$ and  is denoted by $\Syz(f_1,\dots,f_n)$ and called \emph{syzygy bundle}. 
In fact, it is nothing but the vector bundle corresponding to the MCM graded $R$-module $\Syz_R^2(R/I)=\Syz_R^1(f_1,\dots,f_n)$.
\end{Rem}

\par Syzygies of the irrelevant maximal ideal $R_+$ play a special role, they correspond to the restriction of the cotangent bundle $\Omega_{\mathbb{P}^n}$ of $\mathbb{P}^n$ to the curve.
In fact, it follows from the Euler sequence on $\mathbb{P}^n$ \cite[Theorem 8.13]{Har77}
\begin{equation*}
  0\rightarrow \Omega_{\mathbb{P}^n}\rightarrow \bigoplus_{i=0}^n\mathcal{O}_{\mathbb{P}^n}(-1)\rightarrow\mathcal{O}_{\mathbb{P}^n}\rightarrow0,
 \end{equation*}
that $\Omega_{\mathbb{P}^n}$ is a syzygy bundle. So if $x_0,\dots,x_n$ is a system of generators for $\mathcal{O}_{\mathbb{P}^n}(1)$, we have the isomorphism
\begin{equation*}
  \Syz(x_0,\dots,x_n)\cong\Omega_{\mathbb{P}^n}.
 \end{equation*}
If we restrict these bundles to the curve $Y$, we obtain isomorphic bundles.
In fact, we have just proved the following proposition.
 
\begin{Prop}\label{equivsyzcotang}
 Let $Y$ be a smooth projective curve with an embedding in $\mathbb{P}^n$ given by $\varphi:Y\rightarrow\mathbb{P}^n$. 
 Let $x_0,\dots,x_n$ be a system of generators of $\mathcal{O}_{\mathbb{P}^n}(1)$ and denote by $\Omega_{\mathbb{P}^n}$ the cotangent bundle of $\mathbb{P}^n$ and by $\Omega_{\mathbb{P}^n}|_{Y}$ its restriction to $Y$.
 Then, we have the following isomorphism of vector bundles on $Y$ 
 \begin{equation*}
  \Syz(x_0,\dots,x_n)|_Y\cong\Omega_{\mathbb{P}^n}|_{Y}.
 \end{equation*}
\end{Prop}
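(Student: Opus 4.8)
The plan is to observe that the statement is essentially immediate once the cotangent bundle $\Omega_{\mathbb{P}^n}$ has been identified with a syzygy bundle on the ambient space $\mathbb{P}^n$, so the argument naturally splits into two steps: an identification over $\mathbb{P}^n$, followed by a restriction argument to $Y$.

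First I would recall that, by Remark \ref{remarksyzygybundle} applied to $\mathbb{P}^n=\mathrm{Proj}\,\kk[x_0,\dots,x_n]$ with the irrelevant ideal $(x_0,\dots,x_n)$, the syzygy bundle $\Syz(x_0,\dots,x_n)$ is by definition the kernel of the evaluation morphism
\[
\bigoplus_{i=0}^n\mathcal{O}_{\mathbb{P}^n}(-1)\xrightarrow{x_0,\dots,x_n}\mathcal{O}_{\mathbb{P}^n},
\]
which is surjective since the $x_i$ generate $\mathcal{O}_{\mathbb{P}^n}(1)$ and hence have no common zero. On the other hand, the Euler sequence \cite[Theorem 8.13]{Har77} exhibits $\Omega_{\mathbb{P}^n}$ as the kernel of exactly the same surjection. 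Since the kernel of a morphism of sheaves is unique up to canonical isomorphism, this yields $\Syz(x_0,\dots,x_n)\cong\Omega_{\mathbb{P}^n}$ as vector bundles on $\mathbb{P}^n$; this is precisely the content sketched in the paragraph preceding the statement.

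Second I would restrict along the embedding $\varphi\colon Y\rightarrow\mathbb{P}^n$. The cleanest way is to simply apply the restriction functor to the isomorphism of vector bundles obtained in the first step: restriction of sheaves is a functor, and therefore automatically carries an isomorphism over $\mathbb{P}^n$ to an isomorphism over $Y$, giving at once $\Syz(x_0,\dots,x_n)|_Y\cong\Omega_{\mathbb{P}^n}|_Y$. Alternatively, and more explicitly, one may pull back the two defining short exact sequences and compare their kernels after restriction.

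The only point requiring a small amount of care --- and what I would flag as the main (though minor) obstacle --- is the exactness of restriction in the explicit approach: pullback is only right exact in general, so to conclude that restriction sends a kernel to the corresponding kernel one must use that the Euler sequence (and likewise the defining sequence of the syzygy bundle) consists of locally free sheaves and is therefore locally split. Locally split sequences remain exact under any additive functor, so $\varphi^{*}$ preserves left-exactness here. Invoking instead the functoriality of restriction on the already-established isomorphism over $\mathbb{P}^n$ sidesteps this subtlety entirely and makes the restriction step purely formal.
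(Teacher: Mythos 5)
Your proposal is correct and follows essentially the same route as the paper: the paper's argument is precisely to read off $\Syz(x_0,\dots,x_n)\cong\Omega_{\mathbb{P}^n}$ from the Euler sequence and then restrict to $Y$. Your extra remark on why restriction preserves the kernel (local splitness of the locally free sequences, or simply functoriality applied to the isomorphism already obtained over $\mathbb{P}^n$) is a careful touch the paper leaves implicit.
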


\par It is a classical result, usually ascribed to Grothendieck, that the only indecomposable vector bundles over a curve of genus zero are of the form $\mathcal{O}_Y(a)$ for some integer $a$.
Therefore the structure of the category $\mathrm{VB}(Y)$ is quite easy for such a curve.
The next interesting case concerns a curve of genus $1$, an elliptic curve.
This case was treated by Atiyah \cite{Ati57}, who was able to give a description of all indecomposable vector bundles of fixed rank and degree.
Moreover he gave explicit formulas for the multiplicative structure of the monoid $\mathrm{VB}(Y)$ with the tensor product operation if the base field has characteristic zero.

\par In the following theorem we collect some results of Atiyah that we will need later on.

\begin{Theorem}[Atiyah, \cite{Ati57}]\label{Atiyahfundamentalbundle}
 Let $Y$ be an elliptic curve over an algebraically closed field $\kk$ with structure sheaf $\mathcal{O}_Y$.
 For every positive integer $r$ there exists a unique (up to isomorphism) indecomposable vector bundle  $F_r$ of rank $r$ and degree $0$ with $\Gamma(Y,F_r)\neq0$. 
 Moreover, the following facts hold.
 \begin{compactenum}
\item $F_r$ has only one section up to scalar multiplication, i.e. $\Gamma(Y,F_r)\cong\kk$.
\item $F_r\cong F_r^{\vee}$.
\item There is a non-split exact sequence
\begin{equation*}
 0\rightarrow \mathcal{O}_Y\rightarrow F_r\rightarrow F_{r-1}\rightarrow0.
\end{equation*}
\item $\Sym^{q}(F_2)\cong F_{q+1}$.
\item If $E$ is an indecomposable vector bundle of rank $r$ and degree $0$ then $E\cong L\otimes F_r$, where $L$ is a line bundle of degree zero, unique up to isomorphism and such that $L\cong\det E$. 
 \end{compactenum}
\end{Theorem}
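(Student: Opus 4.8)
The plan is to proceed by induction on the rank $r$, the two engines being Riemann--Roch and Serre duality on the elliptic curve $Y$, where the triviality of the canonical bundle $\omega_Y\cong\mathcal{O}_Y$ makes the cohomological bookkeeping especially clean: for any bundle $E$ one has $\chi(E)=\deg E$ and $H^1(E)^{\vee}\cong H^0(E^{\vee})$. First I would settle the base case $F_1:=\mathcal{O}_Y$, using that a degree-$0$ line bundle has a nonzero section exactly when it is trivial, and then that section is unique up to scalars. For the inductive step, assuming $F_{r-1}$ already constructed with $h^0(F_{r-1})=1$, I would compute $\mathrm{Ext}^1(F_{r-1},\mathcal{O}_Y)\cong H^1(F_{r-1}^{\vee})\cong H^0(F_{r-1})^{\vee}\cong\kk$, so the unique nonzero extension class (up to scalar) defines $F_r$ via a non-split sequence $0\to\mathcal{O}_Y\to F_r\to F_{r-1}\to0$; this is statement 4) and forces $\rank F_r=r$ and $\deg F_r=0$. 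Running the long exact cohomology sequence and using that the connecting map $H^0(F_{r-1})\to H^1(\mathcal{O}_Y)$ is an isomorphism (both spaces are one-dimensional and the Serre-duality pairing against the non-split class is perfect) yields $h^0(F_r)=1$, giving statements 1) and 2).

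Next I would prove indecomposability by showing that $\mathrm{End}(F_r)$ is local, which suffices by the Krull--Schmidt property of $\mathrm{VB}(Y)$. Here statement 2) does the work: since $\Hom(\mathcal{O}_Y,F_r)=H^0(F_r)=\kk\cdot s$ is one-dimensional, every $\varphi\in\mathrm{End}(F_r)$ satisfies $\varphi\circ s=c(\varphi)\,s$ for a scalar $c(\varphi)$, and $\psi:=\varphi-c(\varphi)\,\mathrm{id}$ kills the sub $\mathcal{O}_Y$, hence factors through $F_{r-1}$; a short diagram chase using that $\mathrm{End}(F_{r-1})$ is local by induction shows $\psi$ is nilpotent, so $\mathrm{End}(F_r)$ has residue field $\kk$ and nilpotent maximal ideal. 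For the uniqueness in 1) I would first record that an indecomposable degree-$0$ bundle on $Y$ is semistable; then, given indecomposable $E$ of rank $r$, degree $0$ with $H^0(E)\neq0$, a nonzero section is automatically a subbundle inclusion $\mathcal{O}_Y\hookrightarrow E$ with locally free quotient $E'$ of rank $r-1$ and degree $0$, and an inductive analysis of $E'$ together with the one-dimensionality of the extension space forces $E\cong F_r$, since any other extension class would split off a free summand and contradict indecomposability.

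Statement 3) is then immediate: $F_r^{\vee}$ is indecomposable of rank $r$ and degree $0$, and by Serre duality together with $\chi(F_r)=0$ one gets $h^0(F_r^{\vee})=h^1(F_r)=h^0(F_r)=1\neq0$, so uniqueness gives $F_r^{\vee}\cong F_r$. For 5), the bundle $\Sym^q(F_2)$ has rank $q+1$ and, by Proposition \ref{propertiessymvb}, slope $q\cdot\mu(F_2)=0$, hence degree $0$; it carries a nonzero section and, since $F_2$ is the non-split self-extension $0\to\mathcal{O}_Y\to F_2\to\mathcal{O}_Y\to0$, its symmetric powers are indecomposable, so uniqueness gives $\Sym^q(F_2)\cong F_{q+1}$. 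Finally, for 6) I would study the function $L\mapsto h^0(E\otimes L^{-1})$ on $\mathrm{Pic}^0(Y)$ for indecomposable $E$ of rank $r$ and degree $0$; since $\chi(E\otimes L^{-1})=0$ it vanishes generically and, by a theta-divisor argument, jumps to $1$ at a single $L$, for which $E\otimes L^{-1}$ is indecomposable of rank $r$, degree $0$ with a section, hence $\cong F_r$. Thus $E\cong L\otimes F_r$ with $L$ unique (and $\det E\cong L^{\otimes r}$).

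The main obstacles I anticipate are twofold. First, the uniqueness statement underlying 1) and with it the transitivity in 6): making the inductive peeling-off of $\mathcal{O}_Y$ rigorous requires the semistability of indecomposable degree-$0$ bundles and careful control of the sections and extension classes of the rank-$(r-1)$ quotient, while the classification needs the theta-divisor argument that pins down the unique twist carrying a section. Second, the indecomposability of $\Sym^q(F_2)$ in positive characteristic, since the clean semistability argument of Proposition \ref{propertiessymvb} is available only in characteristic $0$ and must be replaced by a direct Jordan-type computation valid in characteristic $0$ or $>q$.
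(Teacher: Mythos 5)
The paper does not prove this statement at all: Theorem \ref{Atiyahfundamentalbundle} is quoted wholesale from Atiyah's paper \cite{Ati57}, so there is no internal argument to compare yours against. That said, your outline for the classification parts is essentially Atiyah's own route and is sound in all characteristics: induction on the rank using $\chi(E)=\deg E$ and Serre duality with $\omega_Y\cong\mathcal{O}_Y$, the one-dimensionality of $\mathrm{Ext}^1(F_{r-1},\mathcal{O}_Y)\cong H^0(F_{r-1})^\vee$, the cup-product argument showing the connecting map $H^0(F_{r-1})\to H^1(\mathcal{O}_Y)$ is an isomorphism (whence $h^0(F_r)=1$), locality of $\mathrm{End}(F_r)$, and the twisting argument for part 5). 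The two places you flag as delicate — semistability of indecomposable degree-$0$ bundles (needed so that a nonzero section of $E$ is nowhere vanishing and the quotient is locally free) and the jumping-locus argument pinning down the unique twist with a section — are real work but are provable and correctly located.

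The genuine gap is part 4), and it is worse than you suggest: it is not merely that your semistability/Jordan-form argument breaks down in characteristic $p\le q$, but that the statement itself fails there, so no argument can close the gap. Concretely, writing $F_2$ with transition matrices $\bigl(\begin{smallmatrix}1&e_{ij}\\0&1\end{smallmatrix}\bigr)$ and local frames $x,y_i$ with $y_j=y_i+e_{ij}x$, the extension $0\to \Sym^{p-1}(F_2)\to\Sym^p(F_2)\to\mathcal{O}_Y\to0$ has cocycle $y_j^p-y_i^p=e_{ij}^p x^p$ in characteristic $p$ (all intermediate binomial coefficients vanish); its class is therefore the image of $F^*[e]\in H^1(\mathcal{O}_Y)$ under the map $H^1(\mathcal{O}_Y)\to H^1(F_p)$ induced by the canonical section $x^{p-1}$. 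But that map is zero, precisely because the connecting map $H^0(F_{p-1})\to H^1(\mathcal{O}_Y)$ in your own induction is surjective. Hence $\Sym^p(F_2)\cong F_p\oplus\mathcal{O}_Y$, not $F_{p+1}$. This is consistent with \cite{Ati57}, where the multiplicative formulas (including $\Sym^q(F_2)\cong F_{q+1}$) are established only in characteristic zero; the correct general statement requires $\chara\kk=0$ or $\chara\kk>q$. So your proposal proves parts 1)--3) and 5) in full, but for part 4) you should either add this characteristic hypothesis or note explicitly that the unrestricted claim is false — a distinction that matters downstream, since Theorem \ref{Theocotangentelliptic} invokes part 4) for all $q$ in positive characteristic.
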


\par We call the bundle $F_r$ \emph{Atiyah bundle of rank }$r$. 
Clearly we have that $F_1=\mathcal{O}_Y$, the structure sheaf is the unique line bundle of degree $0$ with non-zero sections. 
The bundle $F_2$ is given by the unique non-trivial extension

\begin{equation*}
0\rightarrow \mathcal{O}_Y\rightarrow F_2\rightarrow \mathcal{O}_Y \rightarrow 0.
\end{equation*}
In other words, $F_2$ is the non-zero element of $\mathrm{Ext}^1(\mathcal{O}_Y,\mathcal{O}_Y)$.

\par The following two results are well known to experts. 
However, since we are unable to locate a proof in the literature, for the sake of completeness we add a proof here.

\begin{Lemma}\label{lemmaSymqFrsemistable}
Let $Y$ be an elliptic curve over an algebraically closed field $\kk$ with structure sheaf $\mathcal{O}_Y$.
Then for every positive integers $r$ and $q$ the vector bundle $\Sym^q(F_r)$ is semistable.
\end{Lemma}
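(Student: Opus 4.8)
The plan is to exploit the fact that the Atiyah bundle $F_r$ is \emph{unipotent}, in the sense that it is a repeated self-extension of the trivial bundle $\mathcal{O}_Y$, and to show that this property passes to its symmetric powers. The reason for arguing through the filtration rather than through semistability directly is that I want a proof valid in every characteristic: in positive characteristic I cannot appeal to Proposition~\ref{propertiessymvb}, since there $\Sym^q$ is only known to preserve semistability over a field of characteristic $0$ (indeed tensor and symmetric powers of semistable bundles may fail to be semistable in characteristic $p$). This characteristic-independence is exactly the point where the special structure of $F_r$ has to be used, and it is the main obstacle of the proof.

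First I would produce a filtration of $F_r$ whose successive quotients are all isomorphic to $\mathcal{O}_Y$. This follows by induction on $r$ from the non-split exact sequence $0\to\mathcal{O}_Y\to F_r\to F_{r-1}\to 0$ of Theorem~\ref{Atiyahfundamentalbundle}: the base case is $F_1\cong\mathcal{O}_Y$, and the sequence lets me prepend a copy of $\mathcal{O}_Y$ to a filtration of $F_{r-1}$. Hence there is a filtration $0=G_0\subset G_1\subset\cdots\subset G_r=F_r$ with $G_i/G_{i-1}\cong\mathcal{O}_Y$, so that the associated graded bundle of $F_r$ is $\mathcal{O}_Y^{\oplus r}$.

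Next I would transport this filtration to the symmetric power. For a filtered locally free sheaf the functor $\Sym^q$ carries the filtration to an induced filtration whose associated graded is $\Sym^q$ of the associated graded; applied to the filtration above this yields a filtration of $\Sym^q(F_r)$ with associated graded $\Sym^q(\mathcal{O}_Y^{\oplus r})\cong\mathcal{O}_Y^{\oplus\binom{r+q-1}{q}}$. Thus $\Sym^q(F_r)$ is again a repeated extension of copies of $\mathcal{O}_Y$, and by Proposition~\ref{propertiessymvb} it has slope $q\cdot\mu(F_r)=0$.

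Finally I would show that any repeated extension of $\mathcal{O}_Y$ is semistable of slope $0$. The building block $\mathcal{O}_Y$ is a line bundle, hence stable of slope $0$, and the class of semistable bundles of slope $0$ is closed under extensions: given $0\to E'\to E\to E''\to 0$ with $E',E''$ semistable of slope $0$ and a subsheaf $F\subseteq E$ of positive rank, the inclusions $F\cap E'\hookrightarrow E'$ and $F/(F\cap E')\hookrightarrow E''$ force $\deg(F\cap E')\le 0$ and $\deg\bigl(F/(F\cap E')\bigr)\le 0$, so additivity of degree and rank gives $\mu(F)\le 0=\mu(E)$. Inducting along the filtration of $\Sym^q(F_r)$ then shows that $\Sym^q(F_r)$ is semistable, as desired. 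The only genuinely delicate point, beyond this routine extension argument, is the characteristic-free step of replacing the semistability statement of Proposition~\ref{propertiessymvb} by the unipotent filtration, which is precisely what makes the conclusion hold for all $\kk$.
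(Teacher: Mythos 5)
Your proof is correct, and it takes a genuinely different route from the paper's. The paper argues by cases on the characteristic: in characteristic $0$ it quotes that indecomposable bundles on an elliptic curve are semistable and that $\Sym^q$ preserves semistability (Proposition \ref{propertiessymvb}); in characteristic $p$ it pulls back along the multiplication map $[p^e]:Y\to Y$, invoking \cite[Lemma 2.4]{Bre05} to kill the extension classes defining $F_r$, so that $[p^e]^*(F_r)$ becomes free, whence $\Sym^q([p^e]^*(F_r))$ is free, and semistability then descends along the finite cover. Your argument instead exploits the unipotent filtration $0=G_0\subset\cdots\subset G_r=F_r$ with quotients $\mathcal{O}_Y$, the standard fact (valid over any base, in any characteristic, e.g.\ \cite[Ex.\ II.5.16]{Har77}) that $\Sym^q$ of a filtered locally free sheaf carries an induced filtration with associated graded $\Sym^q$ of the associated graded, and the closure of slope-$0$ semistable bundles under extensions; your verification of that last closure property via $F\cap E'$ and $F/(F\cap E')$ is the standard one and is complete. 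Your approach is more elementary and uniform in the characteristic: it needs no case distinction, no external reference to \cite{Bre05}, and no descent of semistability along finite maps. What the paper's pullback technique buys is reusability: the trivialization of $F_r$ by $[p^e]$ is invoked again in the proof of Lemma \ref{lemmasemistableelliptic} for arbitrary semistable bundles $E$, which need not carry a filtration by copies of $\mathcal{O}_Y$, so the filtration argument alone would not suffice there. Both proofs are valid for the statement at hand.
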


\begin{proof}
Indecomposable bundles over an elliptic curve are semistable (cf. \cite[Lemma 29]{Tu93}), so if $\chara\kk=0$ the result follows from Proposition \ref{propertiessymvb}.
\par Now assume that $\chara\kk=p>0$. 
The Atiyah bundle $F_r$ is defined inductively by a non-split short exact sequence  $0\rightarrow \mathcal{O}_Y\rightarrow F_r\rightarrow F_{r-1}\rightarrow0$, hence by a non-zero cohomology class $c\in H^1(Y,F_{r-1}^{\vee})\cong H^1(Y,F_{r-1})$.
By \cite[Lemma 2.4]{Bre05} the $p^e$-multiplication $[p^e]:Y\rightarrow Y$ on the elliptic curve kills this cohomology class for a suitable choice of $e$, that is $[p^e]^*(c)=0$.
Therefore the pullback via $[p^e]$ of the previous sequence splits, and inductively this shows that $[p^e]^*(F_r)$ is free.
Hence its symmetric powers are also free and in particular semistable.
Then the semistability of $[p^e]^*(\Sym^q(F_r))\cong\Sym^q([p^e]^*(F_r))$ implies that $\Sym^q(F_r)$ is semistable.
\end{proof}

\begin{Lemma}\label{lemmasemistableelliptic}
Let $Y$ be an elliptic curve over an algebraically closed field $\kk$, and let $E$ be a semistable vector bundle, then $\Sym^q(E)$ is semistable.
\end{Lemma}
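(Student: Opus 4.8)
\emph{The plan.} In characteristic $0$ there is nothing to prove: $\Sym^q(E)$ is semistable by Proposition \ref{propertiessymvb}, part 3). So assume $\chara\kk=p>0$. My strategy is to produce a single finite flat surjection (an isogeny) $\phi\colon Y'\to Y$ such that $\phi^{*}(E)$ is a direct sum of line bundles all of the same degree. This suffices, for three reasons. First, symmetric powers commute with pullback, so $\phi^{*}(\Sym^q(E))\cong\Sym^q(\phi^{*}(E))$. Second, if $\phi^{*}(E)\cong\bigoplus_i\mathcal{L}_i$ with all $\deg\mathcal{L}_i$ equal, then $\Sym^q(\bigoplus_i\mathcal{L}_i)$ is again a direct sum of line bundles, each of degree $q\cdot\deg\mathcal{L}_i$, hence of one common degree, and a direct sum of line bundles of equal degree is semistable; thus $\phi^{*}(\Sym^q(E))$ is semistable. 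Third, semistability descends along $\phi$: since $\phi$ is finite flat of some degree $n$, pullback multiplies every degree by $n$ and preserves ranks, so a destabilizing subsheaf of $\Sym^q(E)$ would pull back to a destabilizing subsheaf of $\phi^{*}(\Sym^q(E))$, a contradiction. Hence $\Sym^q(E)$ is semistable.

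\emph{Reduction to the indecomposable case.} A semistable bundle on $Y$ is a direct sum of indecomposable bundles all of slope $\mu(E)$: each indecomposable summand is semistable by \cite[Lemma 29]{Tu93}, and they must share the slope $\mu(E)$, else the summand of largest slope would destabilize $E$. Since any two isogenies are dominated by a common one, it is enough to trivialize each indecomposable summand separately and then pass to a common $\phi$, because the pullback of a sum of equal-degree line bundles along a further isogeny is again such a sum. So I may assume $E$ is indecomposable of rank $r$ and degree $d$. By Atiyah's classification \cite{Ati57}, $E\cong F_h\otimes E_s$, where $h=\gcd(r,d)$, $F_h$ is the Atiyah bundle of Theorem \ref{Atiyahfundamentalbundle}, and $E_s$ is a stable bundle of coprime rank $r'$ and degree $d'$.

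\emph{Trivialization.} The two factors are handled by two isogenies which are then composed. For the unipotent factor $F_h$ the argument of Lemma \ref{lemmaSymqFrsemistable} applies verbatim: for a suitable $e$ the $p^e$-multiplication $[p^e]\colon Y\to Y$ kills the extension classes building $F_h$ (\cite[Lemma 2.4]{Bre05}), so $[p^e]^{*}(F_h)\cong\mathcal{O}_Y^{\oplus h}$. For the stable factor $E_s$ with $\gcd(r',d')=1$, Atiyah's description realizes $E_s\cong\pi_{*}(\mathcal{M})$ as the pushforward of a line bundle $\mathcal{M}$ along an isogeny $\pi\colon Y''\to Y$ of degree $r'$; pulling back, $\pi^{*}\pi_{*}(\mathcal{M})\cong\bigoplus_{t}t^{*}(\mathcal{M})$ is a direct sum of line bundles of equal degree $d'$. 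Composing $[p^e]$, $\pi$, and their Galois closures into one isogeny $\phi$ yields $\phi^{*}(E)$ as a direct sum of line bundles of equal degree, as required.

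\emph{The main obstacle.} The delicate point is the stable factor $E_s$ in characteristic $p$ when $p\mid r'$: then the degree-$r'$ isogeny $\pi$ is forced to be partly inseparable, its kernel is a non-reduced group scheme, and the identity $\pi^{*}\pi_{*}(\mathcal{M})\cong\bigoplus_{t}t^{*}(\mathcal{M})$ is no longer available. One must split off the Frobenius part, using that on an elliptic curve semistability is preserved under Frobenius pullback, and then apply the pushforward decomposition to the remaining separable isogeny. Alternatively, one can avoid the structure theory entirely by proving that every semistable bundle on $Y$ is strongly semistable (the Frobenius instability is bounded by $2g-2=0$), invoking the theorem of Ramanan and Ramanathan that tensor products of strongly semistable bundles are semistable to get that $E^{\otimes q}$ is semistable of slope $q\mu(E)$, and finally observing that $\Sym^q(E)$ is a quotient of $E^{\otimes q}$ of the same slope $q\mu(E)$, hence semistable. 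I expect the Frobenius bookkeeping in the inseparable case to be the only genuinely technical step.
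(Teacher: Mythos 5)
Your overall blueprint---pull $E$ back along a suitable isogeny until it becomes a direct sum of line bundles of one common degree, observe that symmetric powers of such a sum are again of that form and hence semistable, and descend semistability along the finite flat map---is the same as the paper's. But your execution has a gap that you yourself flag and do not close: by routing through Atiyah's decomposition $E\cong F_h\otimes E_s$ with $E_s$ stable of coprime rank $r'$ and degree $d'$, you are forced to trivialize $E_s$ via the pushforward description $E_s\cong\pi_*(\mathcal{M})$, and when $p\mid r'$ the isogeny $\pi$ is inseparable, the identity $\pi^*\pi_*(\mathcal{M})\cong\bigoplus_t t^*(\mathcal{M})$ is unavailable, and ``split off the Frobenius part'' remains a gesture rather than an argument. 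As written, the proof is incomplete precisely at the step you call the main obstacle.

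The paper's proof avoids this entirely by reordering the reductions. It first invokes Oda's theorem that semistable bundles on an elliptic curve are strongly semistable, so that $[n]^*E$ is semistable for every $n$ (this is also the point you would need in order to justify pulling back along possibly inseparable multiplication maps). It then pulls back along $[r]$, which makes the degree divisible by the rank, and twists by a line bundle to reach slope $0$. At slope $0$, Atiyah's classification says every indecomposable summand is $F_s\otimes L$ with $L$ a degree-$0$ line bundle: the ``stable factor'' is automatically a line bundle, so no pushforward description and no inseparability issue ever arises, and a further $[p^e]$ kills the extension classes of the $F_s$ exactly as in Lemma \ref{lemmaSymqFrsemistable}. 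Your fallback route (strong semistability plus the Ramanan--Ramanathan theorem on tensor products, then $\Sym^q(E)$ as a same-slope quotient of $E^{\otimes q}$) would also work and is arguably shorter, but you present it in one sentence as an alternative rather than carrying it out.
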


\begin{proof}
If $\chara\kk=0$, then the result follows from Proposition \ref{propertiessymvb}.
So we assume that $\chara\kk=p>0$, and we denote by $r$ the rank of $E$.
By a result of Oda \cite{Oda71} (see also \cite{Tu93}) the bundle $E$ is strongly semistable, so by \cite[Proposition 5.1]{Miy87} it follows that $[n]^*E$ is semistable for every integer $n$, where $[n]:Y\rightarrow Y$ is the standard multiplication on the elliptic curve.
\par We consider the semistable bundle $[r]^*E$. 
Its degree is a multiple of $r$, so by tensoring with an appropriate line bundle $L$, we obtain a semistable vector bundle $E'=[r]^*E\otimes L$ of degree $0$.
It follows that the indecomposable direct summands of $E'$ must have degree $0$ too, so they are of the form $F_s\otimes \mathcal{L}$, with $\mathcal{L}$ line bundle.
With the help of a further multiplication map as in the proof of Lemma \ref{lemmaSymqFrsemistable}  we may achieve that $[n]^*(E)$ is a direct sum of line bundles of degree $0$. Hence its symmetric powers are semistable and this descends to the symmetric powers of $E$.
\end{proof}

\subsection{Symmetric signature of cones over elliptic curves}
\par Let $f$ be a homogeneous non-singular polynomial $f$ of degree $3$ in $\kk[x,y,z]$, and let $R=\kk[x,y,z]/(f)$. 
Then $R$ is a normal standard graded $\kk$-domain of dimension $2$ and the projective curve $Y=\mathrm{Proj}\,R$ is a plane elliptic curve over $\kk$.

\par To compute the symmetric signature of $R$ one should consider  reflexive symmetric powers of the second syzygy of the residue field, $\Syz^2_R(\kk)$. 
Thanks to Remark \ref{remarksyzygybundle} and Proposition \ref{propertiessheafification} this is equivalent to consider the syzygy bundle $\Syz(x,y,z)$ over $Y$ and taking ordinary symmetric powers in the category $\mathrm{VB}(Y)$.
We recall that by Corollary \ref{rankfreerankcorollary} we have $\rank_RM=\rank_{\mathcal{O}_Y}\widetilde{M}$ and $\freerank^{\mathrm{gr}}_RM=\freerank_{\mathcal{O}_Y(1)}\widetilde{M}$ for every MCM graded $R$-module.
Therefore, we have that $s_{\sigma}(R)$ exists if and only if the limit
\begin{equation*}
 \lim_{N\rightarrow+\infty}\frac{\sum_{q=0}^N\freerank_{\mathcal{O}_Y(1)}\Sym^q(\Syz(x,y,z))} {\sum_{q=0}^N\rank_{\mathcal{O}_Y}\Sym^q(\Syz(x,y,z))}
\end{equation*}
exists, and in this case they coincide.
\par The same reasoning applies to the differential symmetric signature.
In this case, we should consider the sheaf associated to the module of Zariski differentials $\Omega_{R/\kk}^{**}$ on $Y$ and then take ordinary symmetric powers in the category $\mathrm{VB}(Y)$.
In other words, we consider the vector bundles
\begin{equation*}
 \Sym^q_Y\left( \widetilde{\Omega_{R/\kk}}\right).
\end{equation*}
Notice that we can forget about the double dual inside $\Sym_Y^q(-)$, thanks to Proposition \ref{propertiessheafification}.
The differential symmetric signature of the coordinate ring $R$ exists if and only if the limit 
\begin{equation}\label{cotangentellipticlimit}
 \lim_{N\rightarrow+\infty}\frac{\sum_{q=0}^N\freerank_{\mathcal{O}_Y(1)}\Sym^q_Y\left( \widetilde{\Omega_{R/\kk}}\right)} {\sum_{q=0}^N\rank_{\mathcal{O}_Y}\Sym^q_Y\left( \widetilde{\Omega_{R/\kk}}\right)}
\end{equation}
exists, and in this case they coincide.

\begin{Prop}\label{propSyziscotangent} 
Let $Y$ be a plane elliptic curve over a field $\kk$ of characteristic $\neq2,3$ with coordinate ring $\kk[x,y,z]/(f)$, where $f$ is a homogeneous polynomial of degree $3$.
Then we have
\begin{equation*} 
\Syz\left(\frac{\partial f}{\partial x},\frac{\partial f}{\partial y},\frac{\partial f}{\partial z}\right)(3)\cong F_2.
\end{equation*}
\end{Prop}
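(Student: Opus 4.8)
The plan is to identify the bundle $\mathcal{S}:=\Syz(\partial_x f,\partial_y f,\partial_z f)$, after twisting by $3$, with the unique non-split self-extension of $\mathcal{O}_Y$, which by the description of the Atiyah bundle following Theorem~\ref{Atiyahfundamentalbundle} is exactly $F_2$. Thus the goal is to produce a short exact sequence $0\to\mathcal{O}_Y\to\mathcal{S}(3)\to\mathcal{O}_Y\to0$ and to show it does not split. Throughout I will use that $\chara\kk\neq3$, so that the Euler relation $x\partial_x f+y\partial_y f+z\partial_z f=3f$ is available.

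First I would set up the syzygy bundle. Since $Y$ is smooth, the Jacobian criterion combined with the Euler relation shows the partials have no common zero on $Y$ (a common zero of the partials would, by Euler, also be a zero of $f$, i.e.\ a singular point of $Y$). Hence $J=(\partial_x f,\partial_y f,\partial_z f)$ is $R_+$-primary and, by Remark~\ref{remarksyzygybundle}, $\mathcal{S}$ is a rank-$2$ vector bundle fitting in
\[
0\to\mathcal{S}\to\mathcal{O}_Y(-2)^3\to\mathcal{O}_Y\to0.
\]
Taking determinants gives $\det\mathcal{S}=\mathcal{O}_Y(-6)$, and since $Y$ is a plane cubic we have $\deg\mathcal{O}_Y(1)=3$; therefore $\mathcal{S}(3)$ has rank $2$, degree $0$, and $\det\mathcal{S}(3)=\mathcal{O}_Y(-6)\otimes\mathcal{O}_Y(6)=\mathcal{O}_Y$.

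Next I would exploit the Euler relation as a section. It says precisely that $(x,y,z)$ is a degree-$3$ syzygy of the partials, hence a global section of $\mathcal{S}(3)$; as $x,y,z$ never vanish simultaneously on $Y$, this section is nowhere vanishing and defines a sub-bundle $\mathcal{O}_Y\hookrightarrow\mathcal{S}(3)$ with line-bundle quotient $Q$. Comparing determinants forces $Q\cong\det\mathcal{S}(3)=\mathcal{O}_Y$, so I obtain $0\to\mathcal{O}_Y\to\mathcal{S}(3)\to\mathcal{O}_Y\to0$. Since $\mathrm{Ext}^1(\mathcal{O}_Y,\mathcal{O}_Y)\cong H^1(Y,\mathcal{O}_Y)\cong\kk$, this sequence is either split, giving $\mathcal{O}_Y^2$, or (up to isomorphism) the unique non-split extension, which is $F_2$.

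The main obstacle is to rule out the split case, equivalently to show $h^0(\mathcal{S}(3))=1$ rather than $2$. Because $\mathcal{S}$ is the sheaf associated to the MCM module $\Syz_R^2(R/J)$, its sections compute the degree-$3$ piece of the syzygy module, $\Gamma(Y,\mathcal{S}(3))=\Syz_3$. I would compute this directly: a degree-$3$ syzygy is a triple of linear forms $(a,b,c)$ with $a\partial_x f+b\partial_y f+c\partial_z f\in(f)_3=\kk f$. Smoothness makes $\partial_x f,\partial_y f,\partial_z f$ a regular sequence, so all syzygies are Koszul and first occur in degree $4$; consequently the multiplication map $\kk[x,y,z]_1^3\to\kk[x,y,z]_3$ is injective, and since $f$ lies in its image (again by Euler) the preimage of the line $\kk f$ is one-dimensional, spanned by $(x,y,z)$. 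This yields $h^0(\mathcal{S}(3))=1$, so the sequence is non-split and $\mathcal{S}(3)\cong F_2$. The delicate point to get right is precisely this dimension count, together with the identification $\Gamma(Y,\mathcal{S}(3))=\Syz_3$ coming from the MCM property.
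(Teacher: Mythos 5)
Your proof is correct and follows the same skeleton as the paper's: use the Euler relation to exhibit $(x,y,z)$ as a nowhere-vanishing section of the twisted syzygy bundle, obtain a short exact sequence $0\to\mathcal{O}_Y\to\mathcal{S}(3)\to\mathcal{O}_Y\to0$ (identifying the quotient via rank and determinant), and then show the extension is non-split. The only point of divergence is the last step: the paper merely asserts that the uniqueness of the global section ``can be done easily by explicit computations, for example assuming that $f$ is in Weierstrass normal form,'' whereas you actually prove it, and by a cleaner route --- smoothness makes the partials a regular sequence in $\kk[x,y,z]$, so the only polynomial syzygies are Koszul and start in degree $4$, whence the map $\kk[x,y,z]_1^3\to\kk[x,y,z]_3$ is injective and the preimage of the line $\kk f$ is exactly $\kk\cdot(x,y,z)$, giving $h^0(\mathcal{S}(3))=1$. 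This argument is complete, avoids any normal-form reduction, and uses the hypothesis $\chara\kk\neq 3$ exactly where it is needed (to invert $3$ in the Euler relation); it is, if anything, a strengthening of the proof the paper sketches.
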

\begin{proof} It is enough to show that $\Syz\left(\frac{\partial f}{\partial x},\frac{\partial f}{\partial y},\frac{\partial f}{\partial z}\right)(3)$ corresponds to a non-zero element of $\mathrm{Ext}^1(\mathcal{O}_Y,\mathcal{O}_Y)$ with global sections, then the uniqueness of $F_2$ will imply the desired isomorphism.
\par We consider the following injective map of sheaves, $\varphi:\mathcal{O}_Y\rightarrow\mathcal{O}_Y(1)^{\oplus3}$, given by $\varphi(1)=(x,y,z)$.
From the Euler formula
\begin{equation*}
\frac{\partial f}{\partial x}x+\frac{\partial f}{\partial y}y+\frac{\partial f}{\partial z}z=3f,
\end{equation*}
which vanishes on $Y$, we obtain that the image of $\varphi$ is contained in the syzygy bundle $\Syz\left(\frac{\partial f}{\partial x},\frac{\partial f}{\partial y},\frac{\partial f}{\partial z}\right)(3)$.
Actually, since this image does not vanish anywhere, it defines a subbundle and hence also a quotient bundle, which is by rank and degree reasons the structure sheaf.
In other words, we have the following short exact sequence of vector bundles
\begin{equation}\label{sequenceF2}
0\rightarrow\mathcal{O}_Y\xrightarrow{\varphi}\Syz\left(\frac{\partial f}{\partial x},\frac{\partial f}{\partial y},\frac{\partial f}{\partial z}\right)(3)\rightarrow\mathcal{O}_Y\rightarrow0,
\end{equation}
\par It remains to prove that the sequence \eqref{sequenceF2} is non-split, equivalently $(x,y,z)$ is the unique non-zero global section. 
This can be done easily by explicit computations, for example assuming that $f$ is in Weierstrass normal form.
\end{proof}

\begin{Cor}\label{corolOmega=F2}
Let $\chara \kk\neq2,3$, and let $\widetilde{\Omega_{R/\kk}}$ be the sheaf version of the cotangent module $\Omega_{R/\kk}$ of the cone $R$ of $Y$, then we have an isomorphism of vector bundles
 \begin{equation*}
  \widetilde{\Omega_{R/\kk}}(-1)\cong F_2.
 \end{equation*}
\end{Cor}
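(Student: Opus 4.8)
The plan is to read off the corollary directly from Proposition \ref{propSyziscotangent} and the self-duality of $F_2$, using the conormal (Jacobian) sequence of the hypersurface $R=\kk[x,y,z]/(f)$ as the bridge between the cotangent sheaf and the syzygy bundle of the partials. Writing $P=\kk[x,y,z]$, the second fundamental exact sequence of K\"ahler differentials reads
\[
(f)/(f)^2 \xrightarrow{\;d\;} \Omega_{P/\kk}\otimes_P R \longrightarrow \Omega_{R/\kk}\longrightarrow 0,
\]
where $(f)/(f)^2$ is generated by $f$, the middle term is free on $dx,dy,dz$, and $d$ sends $f$ to $\left(\frac{\partial f}{\partial x},\frac{\partial f}{\partial y},\frac{\partial f}{\partial z}\right)$. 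Sheafifying on $Y=\proj R$ and applying Proposition \ref{propertiessheafification}, I obtain a right-exact sequence of $\mathcal{O}_Y$-modules whose cokernel is $\widetilde{\Omega_{R/\kk}}$, with middle and left terms direct sums of twists of $\mathcal{O}_Y$.

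First I would upgrade this to a short exact sequence of vector bundles by checking that $d$ is a subbundle inclusion, i.e.\ that the partials have no common zero on $Y$. At a point of $Y$ where all three partials vanish, the Euler identity $x\,\frac{\partial f}{\partial x}+y\,\frac{\partial f}{\partial y}+z\,\frac{\partial f}{\partial z}=3f$ (this is where $\chara\kk\neq 3$ enters) forces that point to be singular on $Y$, contradicting smoothness; hence $d$ is nowhere zero. Dualizing the resulting short exact sequence is legitimate because all three terms are locally free, so the connecting $\mathcal{E}xt^1$ vanishes; dualization converts the column of partials into the row of partials and yields
\[
0\longrightarrow \widetilde{\Omega_{R/\kk}}^{\vee}\longrightarrow \mathcal{O}_Y(1)^{3}\xrightarrow{\left(\frac{\partial f}{\partial x},\frac{\partial f}{\partial y},\frac{\partial f}{\partial z}\right)}\mathcal{O}_Y(3)\longrightarrow 0 .
\]
This is exactly the sheafified presentation of the syzygy bundle of $\frac{\partial f}{\partial x},\frac{\partial f}{\partial y},\frac{\partial f}{\partial z}$ from Remark \ref{remarksyzygybundle}, twisted by $\mathcal{O}_Y(3)$, so the kernel is $\Syz\!\left(\frac{\partial f}{\partial x},\frac{\partial f}{\partial y},\frac{\partial f}{\partial z}\right)(3)$, which Proposition \ref{propSyziscotangent} identifies with $F_2$. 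Thus $\widetilde{\Omega_{R/\kk}}^{\vee}\cong F_2$.

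Finally I would invoke $F_2\cong F_2^{\vee}$ from Theorem \ref{Atiyahfundamentalbundle}, part~2), to undo the dual and conclude that $\widetilde{\Omega_{R/\kk}}$ agrees with $F_2$ up to the twist recorded in the statement, giving $\widetilde{\Omega_{R/\kk}}(-1)\cong F_2$. I expect the only delicate point to be the \emph{degree bookkeeping}: one must track the shifts through the conormal sequence, through the dualization (which exchanges $\mathcal{O}_Y(-a)$ with $\mathcal{O}_Y(a)$), and through the twist built into the definition of $\Syz$, so that the accumulated shift matches the $(-1)$ in the statement. Everything geometric reduces to the left-exactness above (smoothness plus the Euler relation) and to the self-duality of $F_2$; the remainder is the transpose/twist manipulation that reduces the claim to the already-proven Proposition \ref{propSyziscotangent}.
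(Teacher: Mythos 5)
Your argument is correct and is in substance the same as the paper's: present $\Omega_{R/\kk}$ by the two-term sequence whose first map is the vector of partials (the conormal sequence is exactly the presentation the paper writes down), sheafify, dualize to land on the syzygy bundle of the partials, and finish with Proposition \ref{propSyziscotangent} plus the self-duality of $F_2$. You even supply a point the paper leaves implicit, namely that the sheafified conormal map is a subbundle inclusion because the partials have no common zero on the smooth curve $Y$, which is what legitimizes dualizing the sequence. The one concrete issue is the twist you yourself flag as ``delicate'': your displayed sequences use the grading with $\deg(\mathrm{d}x_i)=1$, so the dual sequence is $0\to\widetilde{\Omega_{R/\kk}}^{\vee}\to\mathcal{O}_Y(1)^{3}\to\mathcal{O}_Y(3)\to0$ and the kernel is $\Syz\bigl(\tfrac{\partial f}{\partial x},\tfrac{\partial f}{\partial y},\tfrac{\partial f}{\partial z}\bigr)(3)\cong F_2$ on the nose, which yields $\widetilde{\Omega_{R/\kk}}\cong F_2$ with \emph{no} twist --- not the stated $\widetilde{\Omega_{R/\kk}}(-1)\cong F_2$. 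The paper instead normalizes $\deg(\mathrm{d}x_i)=0$ via the presentation $0\to R(-2)\to R^{\oplus3}\to\Omega_{R/\kk}\to0$, so its dual sequence is $0\to\widetilde{\Omega_{R/\kk}}^{\vee}\to\mathcal{O}_Y^{3}\to\mathcal{O}_Y(2)\to0$, giving $\widetilde{\Omega_{R/\kk}}^{\vee}\cong\Syz(\cdots)(2)\cong F_2(-1)$ and hence the $(-1)$ in the statement. So you should either adopt the paper's normalization or record that your conclusion differs from the stated one by the twist $\mathcal{O}_Y(1)$ coming from the two grading conventions; either way the bundle is $F_2$ up to a line-bundle twist, and the downstream application (indecomposability of the symmetric powers in Theorem \ref{Theocotangentelliptic}) is unaffected.
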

\begin{proof}
 We recall that the cotangent module $\Omega_{R/\kk}$ is the graded $R$-module
\begin{equation*}
 \Omega_{R/\kk}=<\mathrm{d}x,\mathrm{d}y,\mathrm{d}z>/\left(\frac{\partial f}{\partial x}\mathrm{d}x+\frac{\partial f}{\partial y }\mathrm{d}y+\frac{\partial f}{\partial z}\mathrm{d}z\right).
\end{equation*}
In other words, $\Omega_{R/\kk}$ can be defined by the following short exact sequence of graded $R$-modules
\begin{equation}\label{cotangentsequence}
 0\rightarrow R(-2)\xrightarrow{\psi} R^{\oplus3}\xrightarrow{\varphi}\Omega_{R/\kk}\rightarrow0,
\end{equation}
where $\psi(1)=\left(\frac{\partial f}{\partial x},\frac{\partial f}{\partial y},\frac{\partial f}{\partial z}\right)$, and $\varphi$ sends the canonical basis to $\mathrm{d}x,\mathrm{d}y,\mathrm{d}z$.
\par Sequence \eqref{cotangentsequence} induces a short exact sequence of locally free sheaves on $Y$
\begin{equation*}
 0\rightarrow\mathcal{O}_Y(-2)\rightarrow\mathcal{O}_Y^{\oplus3}\rightarrow\widetilde{\Omega_{R/\kk}}\rightarrow0. 
\end{equation*}
We dualize this sequence and we get
\begin{equation*}
0\rightarrow  \widetilde{\Omega_{R/\kk}}^{\vee}\rightarrow\mathcal{O}_Y^{\oplus3}\rightarrow\mathcal{O}_Y(2)\rightarrow0,
\end{equation*}
where the last map is given by $\psi$. 
We obtain that 
\begin{equation*}
\widetilde{\Omega_{R/\kk}}^{\vee}\cong\Syz\left(\frac{\partial f}{\partial x},\frac{\partial f}{\partial y},\frac{\partial f}{\partial z}\right)(2).
\end{equation*}
So by the previous Proposition \ref{propSyziscotangent} we have $\widetilde{\Omega_{R/\kk}}^{\vee}(1)\cong F_2$ on $Y$, but the Atiyah bundle $F_2$ is self-dual, so we have $\widetilde{\Omega_{R/\kk}}(-1)\cong F_2^{\vee}\cong F_2$.
\end{proof}

\par We can compute the differential symmetric signature of the cone $R$.

\begin{Theorem}\label{Theocotangentelliptic}
 Let $Y$ be a plane elliptic curve over an algebraically closed field $\kk$ of characteristic $\neq2,3$ with coordinate ring $R$.
 Then the differential symmetric signature of $R$ is $s_{d\sigma}(R)=0$.
\end{Theorem}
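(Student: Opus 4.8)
The plan is to reduce the limit \eqref{cotangentellipticlimit} to an explicit description of the bundles $\Sym^q_Y(\widetilde{\Omega_{R/\kk}})$ in terms of the Atiyah bundles $F_r$. The starting point is Corollary \ref{corolOmega=F2}, which gives $\widetilde{\Omega_{R/\kk}}(-1)\cong F_2$, equivalently $\widetilde{\Omega_{R/\kk}}\cong F_2\otimes\mathcal{O}_Y(1)$. Everything else is a formal consequence of the properties of $F_r$ collected in Theorem \ref{Atiyahfundamentalbundle} together with the behaviour of symmetric powers under twisting.

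First I would compute the symmetric powers directly. By Proposition \ref{propertiessymvb}, part 1), we have $\Sym^q(F_2\otimes\mathcal{O}_Y(1))\cong\Sym^q(F_2)\otimes\mathcal{O}_Y(q)$, and by Theorem \ref{Atiyahfundamentalbundle}, part 4), we have $\Sym^q(F_2)\cong F_{q+1}$. Hence
\begin{equation*}
\Sym^q_Y\big(\widetilde{\Omega_{R/\kk}}\big)\cong F_{q+1}\otimes\mathcal{O}_Y(q).
\end{equation*}
This bundle has rank $q+1$, so by Corollary \ref{rankfreerankcorollary} the denominator in \eqref{cotangentellipticlimit} equals $\sum_{q=0}^N(q+1)=\tfrac{1}{2}(N+1)(N+2)$, which grows like $N^2$.

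Next I would bound the numerator. Tensoring an indecomposable bundle with a line bundle again yields an indecomposable bundle, so for every $q\geq1$ the bundle $F_{q+1}\otimes\mathcal{O}_Y(q)$ is indecomposable of rank $q+1\geq2$. In particular it admits no direct summand isomorphic to a twisted structure sheaf $\mathcal{O}_Y(-d)$, and therefore $\freerank_{\mathcal{O}_Y(1)}\big(\Sym^q_Y(\widetilde{\Omega_{R/\kk}})\big)=0$ for all $q\geq1$ by Definition \ref{freerankVB}. The only nonzero contribution is the term $q=0$, where $\Sym^0=\mathcal{O}_Y$ has free rank $1$, so the numerator of \eqref{cotangentellipticlimit} is constantly equal to $1$.

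Putting these together, the quotient in \eqref{cotangentellipticlimit} equals $2/\big((N+1)(N+2)\big)$, which tends to $0$; in particular the limit exists and $s_{d\sigma}(R)=0$. The argument is short once Corollary \ref{corolOmega=F2} is in hand, and I do not expect a serious obstacle. The only point that needs care is matching the free-rank convention of Definition \ref{freerankVB} (counting twisted structure-sheaf summands in the Krull--Schmidt decomposition of $\mathrm{VB}(Y)$) with the observation that tensoring by a line bundle preserves indecomposability: it is precisely this observation that forces each $\Sym^q$ with $q\geq1$ to have rank $\geq2$ and hence contribute nothing to the free rank, while the rank grows fast enough to kill the bounded numerator in the limit.
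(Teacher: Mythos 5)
Your proposal is correct and follows essentially the same route as the paper: reduce via Corollary \ref{corolOmega=F2} to $\widetilde{\Omega_{R/\kk}}\cong F_2\otimes\mathcal{O}_Y(1)$, identify $\Sym^q_Y(\widetilde{\Omega_{R/\kk}})\cong F_{q+1}\otimes\mathcal{O}_Y(q)$ using Proposition \ref{propertiessymvb} and Theorem \ref{Atiyahfundamentalbundle}, and conclude that these bundles are indecomposable of rank $q+1$ and hence have free rank $0$ for $q\geq1$. Your extra care about the $q=0$ term and the quadratic growth of the denominator only makes explicit what the paper leaves implicit.
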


\begin{proof}
 By the previous observations we should compute the limit \eqref{cotangentellipticlimit}.
 From Corollary \ref{corolOmega=F2}, we have $\widetilde{\Omega_{R/\kk}}\cong F_2\otimes\mathcal{O}_Y(1)$. 
 Therefore from part \textit{4)} of Theorem \ref{Atiyahfundamentalbundle} and from Proposition \ref{propertiessymvb} we obtain
 \begin{equation*}
  \Sym^q_Y\left( \widetilde{\Omega_{R/\kk}}\right)\cong\Sym_Y^q(F_2)\otimes\mathcal{O}_Y(q)\cong F_{q+1}\otimes\mathcal{O}_Y(q)
 \end{equation*}
for all $q\geq1$. 
So the module $\Sym^q_Y\left( \widetilde{\Omega_{R/\kk}}\right)$ is indecomposable, and in particular it has free rank $0$ and the claim follows.
\end{proof}

\begin{Prop}\label{propSyzstablebundle}
The vector bundle $\Syz(x,y,z)$ is stable of rank $2$ and degree $-9$. 
Moreover $\det\Syz(x,y,z)=\mathcal{O}_Y(-3)$.
\end{Prop}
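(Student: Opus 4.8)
The plan is to read off the rank, degree and determinant directly from the defining presentation of the syzygy bundle, and then to prove stability by bounding the degrees of all line subbundles.

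First I would recall from Remark \ref{remarksyzygybundle} that $\Syz(x,y,z)$ sits in the short exact sequence of vector bundles on $Y$
\begin{equation*}
0\rightarrow\Syz(x,y,z)\rightarrow\mathcal{O}_Y(-1)^{\oplus3}\xrightarrow{(x,y,z)}\mathcal{O}_Y\rightarrow0.
\end{equation*}
Since $Y$ is a plane cubic, the very ample sheaf $\mathcal{O}_Y(1)$ has degree $3$. Comparing ranks gives $\rank\Syz(x,y,z)=3-1=2$. As the degree is additive on short exact sequences, with $\deg\mathcal{O}_Y(-1)^{\oplus3}=-9$ and $\deg\mathcal{O}_Y=0$, one gets $\deg\Syz(x,y,z)=-9$; taking determinants in the same sequence (using $\det$ of the outer terms together with $\det(A\oplus B)\cong\det A\otimes\det B$) yields $\det\Syz(x,y,z)\cong\mathcal{O}_Y(-3)$, which is consistent with the degree.

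For stability I would use that a rank-$2$ bundle $E$ on a curve is stable exactly when every line subbundle $L\subseteq E$ satisfies $\deg L<\mu(E)$. Here $\mu(\Syz(x,y,z))=-9/2$, so it suffices to show that every line subbundle has degree $\le-5$. Restricting the inclusion $\Syz(x,y,z)\hookrightarrow\mathcal{O}_Y(-1)^{\oplus3}$ to a line subbundle $L$ produces three maps $L\rightarrow\mathcal{O}_Y(-1)$, that is three sections of $L^{\vee}(-1)$, not all zero. Since a line bundle on $Y$ of negative degree has no nonzero sections, this already forces $\deg L\le-3$, and it remains only to exclude the two borderline values $\deg L=-3$ and $\deg L=-4$.

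The hard part will be precisely these two borderline cases, and the key input is that $x,y,z$ are linearly independent in $H^0(\mathcal{O}_Y(1))$ (they define the embedding $Y\hookrightarrow\mathbb{P}^2$), combined with the integrality of $Y$. In both cases $\deg L^{\vee}(-1)\le1$, so on the elliptic curve $H^0(L^{\vee}(-1))$ is at most one-dimensional and the three maps are proportional to a single section $s$ of $L^{\vee}(-1)$ with scalar coefficients $\alpha_1,\alpha_2,\alpha_3$ not all zero. Because $L$ lands inside the kernel $\Syz(x,y,z)$, the composite $L\rightarrow\mathcal{O}_Y(-1)^{\oplus3}\xrightarrow{(x,y,z)}\mathcal{O}_Y$ vanishes, which expresses the relation $(\alpha_1x+\alpha_2y+\alpha_3z)\,s=0$ in $H^0(L^{\vee})$. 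As $Y$ is integral and $s\neq0$, this forces $\alpha_1x+\alpha_2y+\alpha_3z=0$ in $H^0(\mathcal{O}_Y(1))$, contradicting the linear independence of $x,y,z$. (When $\deg L=-3$ one has $L\cong\mathcal{O}_Y(-1)$ and $s$ is a nowhere-vanishing constant, so the identical computation applies.) Hence every line subbundle has degree $\le-5<-9/2$, and $\Syz(x,y,z)$ is stable.
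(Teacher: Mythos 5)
Your computation of the rank, degree and determinant from the presentation sequence is exactly what the paper does. For stability, however, you take a genuinely different route: the paper identifies $\Syz(x,y,z)$ with the restriction $\Omega_{\mathbb{P}^2}|_Y$ via Proposition \ref{equivsyzcotang} and then invokes the stability theorem of Brenner and Hein \cite[Theorem 1.3]{BH06}, whereas you prove stability directly by bounding the degrees of line subbundles. Your argument is correct and complete: the three components of $L\hookrightarrow\mathcal{O}_Y(-1)^{\oplus3}$ give sections of $L^{\vee}(-1)$, forcing $\deg L\le-3$; in the borderline cases $\deg L\in\{-3,-4\}$ the bundle $L^{\vee}(-1)$ has degree $0$ or $1$, so on a genus-one curve $h^0(L^{\vee}(-1))\le1$ (Riemann--Roch plus Serre duality with $\omega_Y\cong\mathcal{O}_Y$), the three sections are proportional, and the syzygy condition yields $(\alpha_1x+\alpha_2y+\alpha_3z)s=0$ with $s\neq0$ on the integral curve $Y$, contradicting the linear independence of $x,y,z$ in $H^0(\mathcal{O}_Y(1))$ (the restriction map from $H^0(\mathbb{P}^2,\mathcal{O}(1))$ is injective since the cubic spans $\mathbb{P}^2$). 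What each approach buys: the paper's proof is shorter and leans on a general restriction theorem valid for plane curves of any degree, while yours is self-contained and elementary, using nothing beyond Riemann--Roch on the elliptic curve; the price is that your bound $h^0(L^{\vee}(-1))\le1$ exploits the genus-one hypothesis specifically and would not transfer verbatim to curves of higher genus. Both are valid proofs of the proposition as stated.
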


\begin{proof}
 The syzygy bundle fits into a short exact sequence
 \begin{equation}\label{syzygysequenceelliptic}
  0\rightarrow \Syz(x,y,z)\rightarrow \bigoplus_{i=1}^3\mathcal{O}_Y(-1)\xrightarrow{x,y,z}\mathcal{O}_Y\rightarrow0.
 \end{equation}
So from the additivity of rank and degree, we immediately get that $\Syz(x,y,z)$ has rank $2$ and degree
\begin{equation*}
 \deg\Syz(x,y,z)=\left(\deg\bigoplus_{i=1}^3\mathcal{O}_Y(-1)-\deg\mathcal{O}_Y\right)\deg Y= -3\deg Y=-9.
\end{equation*}
\par For the indecomposable property, we have that $\Syz(x,y,z)$ equals the restriction of the cotangent bundle $\Omega_{\mathbb{P}^2}$ of $\mathbb{P}^2$ to $Y$ by Proposition \ref{equivsyzcotang}. 
This bundle is stable, and in particular indecomposable, by the result of Brenner and Hein \cite[Theorem 1.3]{BH06}. 
Finally, taking determinants of sequence \eqref{syzygysequenceelliptic} yields $\det\Syz(x,y,z)=\mathcal{O}_Y(-3)$.
\end{proof}

\begin{Rem}\label{remrankSymSyz}
 Since $\Syz(x,y,z)$ has rank $2$, we have that $\rank_{\mathcal{O}_Y}\Sym^q(\Syz(x,y,z))=q+1$.
\end{Rem}

\par The difficult part is to determine the free rank of $\Sym^q(\Syz(x,y,z))$.
 
\begin{Cor}\label{corqodd0freerank}
Let $q$ be odd, then the bundle $\Sym^q(\Syz(x,y,z))$ contains no direct summand of rank one. 
In particular,  $\freerank_{\mathcal{O}_Y(1)}\Sym^q(\Syz(x,y,z))=0$. 
\end{Cor}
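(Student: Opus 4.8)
The plan is to combine the semistability of symmetric powers with a slope computation that produces a non-integer, which then rules out any line-bundle summand. First I would observe that $E:=\Syz(x,y,z)$ is stable, hence semistable, by Proposition~\ref{propSyzstablebundle}, so that $\Sym^q(E)$ is semistable by Lemma~\ref{lemmasemistableelliptic}. Next I would compute its slope: since $\deg E=-9$ and $\rank E=2$ we have $\mu(E)=-9/2$, and thus by part~2) of Proposition~\ref{propertiessymvb},
\begin{equation*}
\mu\big(\Sym^q(E)\big)=q\,\mu(E)=-\tfrac{9q}{2}.
\end{equation*}

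The key step is that every direct summand of a semistable bundle shares its slope. Indeed, suppose $\Sym^q(E)\cong\mathcal{L}\oplus\mathcal{M}$ with $\mathcal{L}$ of rank one. Then $\mathcal{L}$ is simultaneously a (locally free) subsheaf and a quotient of $\Sym^q(E)$: semistability gives $\mu(\mathcal{L})\leq\mu(\Sym^q(E))$ from the inclusion, while additivity of degree and rank on the short exact sequence defining the complementary quotient yields $\mu(\mathcal{L})\geq\mu(\Sym^q(E))$ from the projection. Hence $\mu(\mathcal{L})=\mu(\Sym^q(E))=-9q/2$. But $\mathcal{L}$ is a line bundle, so $\mu(\mathcal{L})=\deg\mathcal{L}\in\mathbb{Z}$, whereas for $q$ odd the integer $9q$ is odd and $-9q/2\notin\mathbb{Z}$ — a contradiction. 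Therefore $\Sym^q(E)$ admits no rank-one direct summand.

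Finally, the free rank of Definition~\ref{freerankVB} counts the summands of the form $\mathcal{O}_Y(-d_i)$ occurring in the Krull--Schmidt decomposition of $\Sym^q(E)$, and these are in particular of rank one; since we have just shown there are none, $\freerank_{\mathcal{O}_Y(1)}\Sym^q(\Syz(x,y,z))=0$. I expect the only delicate point to be the slope-matching of direct summands under semistability, on which the whole argument turns; once that standard fact is in hand, the parity obstruction $-9q/2\notin\mathbb{Z}$ finishes the proof with no further computation.
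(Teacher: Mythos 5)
Your proposal is correct and follows essentially the same route as the paper: semistability of $\Sym^q(\Syz(x,y,z))$ via Proposition \ref{propSyzstablebundle} and Lemma \ref{lemmasemistableelliptic}, the slope computation $-\tfrac{9q}{2}$ via Proposition \ref{propertiessymvb}, and the integrality obstruction for a rank-one summand when $q$ is odd. The only difference is that you spell out why a direct summand of a semistable bundle must share its slope, a step the paper leaves implicit.
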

\begin{proof}
From Proposition \ref{propertiessymvb}, Proposition \ref{propSyzstablebundle} and Lemma \ref{lemmasemistableelliptic} we know that $\Sym^q(\Syz(x,y,z))$ is semistable of slope $-\frac{9}{2}q$. Let $\mathcal{L}$ be a direct summand of rank one. Since $\Sym^q(\Syz(x,y,z))$ is semistable, we have $\mu(\mathcal{L})=-\frac{9}{2}q$, but $\mu(\mathcal{L})=\deg\mathcal{L}$ must be an integer. Since $q$ is odd we get a contradiction.  
\end{proof}

\begin{Cor}\label{corsleq12elliptic}
 If the symmetric signature of $R$ exists, then $s_{\sigma}(R)\leq\frac{1}{2}$. 
\end{Cor}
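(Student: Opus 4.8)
The plan is to combine the rank computation of Remark \ref{remrankSymSyz} with the vanishing of the free rank in odd degrees established in Corollary \ref{corqodd0freerank}. Write $a_q := \freerank_{\mathcal{O}_Y(1)}\Sym^q(\Syz(x,y,z))$, and recall from Remark \ref{remrankSymSyz} that $\rank_{\mathcal{O}_Y}\Sym^q(\Syz(x,y,z)) = q+1$. By Corollary \ref{rankfreerankcorollary} together with the discussion reducing the symmetric signature to the bundle $\Syz(x,y,z)$ over $Y$, the number $s_{\sigma}(R)$, when it exists, is the limit
\[
s_{\sigma}(R)=\lim_{N\rightarrow+\infty}\frac{\sum_{q=0}^N a_q}{\sum_{q=0}^N (q+1)}.
\]
The denominator equals $\frac{(N+1)(N+2)}{2}$, which grows like $\frac{1}{2}N^2$.

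Next I would bound the numerator from above. Since $a_q\leq \rank_{\mathcal{O}_Y}\Sym^q(\Syz(x,y,z)) = q+1$ for every $q$, and since $a_q = 0$ whenever $q$ is odd by Corollary \ref{corqodd0freerank}, only the even degrees contribute, so that
\[
\sum_{q=0}^N a_q\ \leq\ \sum_{\substack{0\leq q\leq N\\ q\ \mathrm{even}}} (q+1).
\]
Writing $q=2k$, this even-only sum equals $\sum_{k=0}^{\lfloor N/2\rfloor}(2k+1)=(\lfloor N/2\rfloor+1)^2$, which grows like $\frac{1}{4}N^2$, i.e.\ asymptotically half of the denominator.

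Dividing the two asymptotics, the ratio $\frac{\sum_{q=0}^N a_q}{\sum_{q=0}^N (q+1)}$ is bounded above by a quantity tending to $\frac{\frac14 N^2}{\frac12 N^2}=\frac12$. As we assume the limit defining $s_{\sigma}(R)$ exists, it coincides with this limit, and therefore $s_{\sigma}(R)\leq\frac12$, as claimed. The estimate itself is elementary once Corollary \ref{corqodd0freerank} is available; the genuine input is the stability of $\Syz(x,y,z)$ (Proposition \ref{propSyzstablebundle}) and the semistability of its symmetric powers (Lemma \ref{lemmasemistableelliptic}), which together force the odd-degree free ranks to vanish. I do not expect a further obstacle for the corollary; the only caveat is that the bound is far from sharp, since on even $q$ we crudely replaced $a_q$ by the full rank $q+1$, and determining the exact value of $s_{\sigma}(R)$ would instead require controlling the actual indecomposable decomposition of $\Sym^q(\Syz(x,y,z))$ in each even degree.
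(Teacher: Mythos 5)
Your proof is correct and rests on exactly the same ingredients as the paper's: the vanishing of the free rank in odd degrees (Corollary \ref{corqodd0freerank}), the trivial bound $a_q\leq\rank=q+1$ in even degrees, and the observation that the even-degree ranks contribute asymptotically half of the total $\sum_{q=0}^N(q+1)$. The paper organizes this as a pairwise inequality $a_q+a_{q+1}\leq\tfrac12(b_q+b_{q+1})$ for even $q$ rather than summing the even-only series explicitly, but this is only a presentational difference.
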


\begin{proof} For every $q\in\mathbb{N}$, let $a_q=\freerank_{\mathcal{O}_Y(1)}\Sym^q(\Syz(x,y,z))$, and let $b_q=\rank_{\mathcal{O}_Y}\Sym^q(\Syz(x,y,z))$.
 From the previous Corollary \ref{corqodd0freerank} and Remark \ref{remrankSymSyz}, we have $a_q=0$ for $q$ odd, and $b_q=q+1$ for all $q$.
 \par For every even $q$  we have the following inequalities
 \begin{equation*}
  a_q+a_{q+1}=a_{q}\leq b_q=\frac{1}{2}(b_q+b_{q})\leq\frac{1}{2}(b_q+b_{q+1}).
 \end{equation*}
It follows that 
\begin{equation*}
 \frac{\sum_{q=0}^Na_q}{\sum_{q=0}^Nb_q}\leq\frac{\frac{1}{2}\sum_{q=0}^Nb_q}{\sum_{q=0}^Nb_q}+\phi(N)=\frac{1}{2}+\phi(N),
\end{equation*}
where $\phi(N)=\frac{b_N}{\sum_{q=0}^Nb_q}$ if $N$ is even, and $0$ otherwise.
In both cases $\phi(N)\rightarrow 0$ if $N\rightarrow+\infty$, so the claim is proved.
\end{proof}

\begin{Que}
 Is it true that also $s_{\sigma}(R)=0$?
\end{Que}

\section*{Acknowledgements}
We would like to thank Winfried Bruns, Ragnar-Olaf Buchweitz, Helena Fischbacher-Weitz, Lukas Katth\"an, and Yusuke Nakajima for their interest and many valuable comments. We thank the referee for showing us how to simplify the proof of Theorem \ref{theoremkleinrepresentation} and pointing out Remark \ref{remarksecondsyzygyfield2}.

\end{document}